\theoremstyle{plain}
\newtheorem{prop}{Proposition}[section]
\newtheorem{thm}[prop]{Theorem}
\newtheorem{cor}[prop]{Corollary}
\newtheorem{lem}[prop]{Lemma}
\theoremstyle{definition}
\newtheorem{dfn}[prop]{Definition}
\newtheorem{lab}[prop]{}
\theoremstyle{definition}
\newtheorem{example}[prop]{Example}
\newtheorem{Examples}[prop]{Examples}
\newtheorem{rem}[prop]{Remark}
\newtheorem{Rems}[prop]{Remarks}
\newcounter{reml}
\newenvironment{remlist}{\begingroup\setcounter{reml}{0}}
  {\endgroup}
\newcommand{\invisiblechar}{\rule{1em}{0ex}}
\newcommand{\emptyline}{\invisiblechar\par}
\newcommand{\into}{\hookrightarrow}
\newcommand{\onto}{\twoheadrightarrow}
\renewcommand{\iff}{\Leftrightarrow}
\newcommand{\A}{{\mathbb{A}}}
\newcommand{\G}{{\mathbb{G}}}
\newcommand{\N}{{\mathbb{N}}}
\renewcommand{\P}{{\mathbb{P}}}
\newcommand{\R}{{\mathbb{R}}}
\newcommand{\Z}{{\mathbb{Z}}}
\newcommand{\scrP}{\mathscr{P}}
\newcommand{\scrS}{\mathscr{S}}
\newcommand{\scrX}{\mathscr{X}}
\renewcommand{\epsilon}{\varepsilon}
\DeclareMathOperator{\Hom}{Hom}
\DeclareMathOperator{\im}{im}
\DeclareMathOperator{\Sper}{Sper}
\DeclareMathOperator{\supp}{supp}
\DeclareTextFontCommand{\textnf}{\normalfont}
\newcommand{\id}{\mathrm{id}}
\newcommand{\rey}{\rho}
\newcommand{\Sat}{\mathrm{Sat}}
\newcommand{\comp}{\mathbin{\scriptstyle\circ}} % composition
\newcommand{\du}{{\scriptscriptstyle\vee}}
\renewcommand{\emptyset}{\varnothing}
\newcommand{\hvar}{{-}} % placeholder, as in Hom(-,-)
\newcommand{\ol}{\overline}
\newcommand{\plus}{{\scriptscriptstyle+}}
\newcommand{\To}{\Rightarrow}
\newcommand{\wt}[1]{\widetilde{#1}}
\newcommand{\qu}{/\!/\,}
\newcommand{\sa}{semi-algebraic}
\newcommand{\Label}[1]{\label{#1}}
\newcommand{\oldversion}[1]{}
\begin{document}

\title
[Sums of squares and moment problems in equivariant situations]
{Sums of squares and moment problems\\[3pt]in equivariant situations}

% was before:
%[$G$-invariant sums of squares and $G$-invariant moment problems]
%{$G$-invariant sums of squares\\and $G$-invariant moment problems}

\author{Jaka Cimpri\v c}
%   \thanks{}
   \address{Faculty of Mathematics and Physics,
   University of Ljubljana,
   Jadranska 19, SI-1000 Ljubljana, Slovenija}
   \email{cimpric@fmf.uni-lj.si}
   \urladdr{http://www.fmf.uni-lj.si/\textasciitilde cimpric}

\author{Salma Kuhlmann}
%   \thanks{}
   \address{Department of Mathematics and Statistics,
University of Saskatchewan,
Room 142 McLean Hall,
106 Wiggins Road,
Saskatoon, SK S7N 5E6,
Canada}
   \email{skuhlman@snoopy.usask.ca }
   \urladdr{http://math.usask.ca/\textasciitilde skuhlman}

\author{Claus Scheiderer}
   \thanks{The third author was partially supported by the European
     RTNetwork RAAG, HPRN-CT-2001-00271}
   \address{Fachbereich Mathematik und Statistik\\
     Universit\"at Konstanz\\
     D-78457 Konstanz\\
     Germany}
   \email{claus.scheiderer@uni-konstanz.de}
   \urladdr{http://www.math.uni-konstanz.de/\textasciitilde scheider}

\keywords{semialgebraic sets, group actions, moment problems}
\subjclass[2000]{14P10,14L30,20G20}

\begin{abstract}
We begin a systematic study of positivity and moment problems in an
equivariant setting. Given a reductive group $G$ over $\R$ acting on an
affine $\R$-variety $V$, we consider the induced dual action on the
coordinate ring $\R[V]$ and on the linear dual space of $\R[V]$. In
this setting, given an invariant closed semialgebraic subset $K$ of
$V(\R)$, we study the problem of representation of invariant nonnegative
polynomials on $K$ by invariant sums of squares, and the closely related
problem of representation of invariant linear functionals on $\R[V]$ by
invariant measures supported on $K$. To this end, we analyse the relation
between quadratic modules of $\R[V]$ and associated quadratic modules of
the (finitely generated) subring $\R[V]^G$ of invariant polynomials. We
apply our results to investigate the finite solvability of an equivariant
version of the multidimensional $K$-moment problem. Most of our results
are specific to the case where the group $G(\R)$ is compact. 
\end{abstract}

\date\today
\maketitle

%===================================================================%

\section*{Introduction}

The study of positivity versus sums of squares has a long and
illustrious tradition, starting with Minkowski and Hilbert. It is
remarkable how the interest in such questions has risen in the last
decade, in particular in fields outside real algebraic geometry. The
application of semidefinite programming methods is currently turning
sums of squares into an efficient tool in polynomial optimization.
Another field to which sums of squares methods have recently been
applied successfully is moment problems and related questions in
analysis. Yet another, and still fresh, development is the study of
sums of squares in non-commutative settings, and the exploration
of its use in applications like engineering. An excellent overview of
such ongoing developments can be found in \cite{HP}.

Both in theoretical considerations and in practical applications, it
happens quite often that the situation in question allows symmetries.
For example, consider the problem of characterizing all polynomials
which are non-negative on a given closed subset $K$ of $\R^n$. If $K$
is invariant under some subgroup $G$ of the general linear group, one
may ask for a characterization of the $G$-invariant polynomials which
are non-negative on $K$. How can they be described in terms of
invariant sums of squares, or even, in terms of sums of squares of
invariants?

The first to systematically study equivariant situations in real
algebraic geometry were Procesi and Schwarz, in their landmark paper
\cite{PS}. Their main result characterizes the \emph{real} orbit
space of a matrix group $G$, in the case when $G$ is compact. In
particular, they showed that the real orbit space is described by
(finitely many) simultaneous inequalities which are essentially
explicit. These results were later refined by Br\"ocker \cite{Br}
who studied, in particular, the minimum number of inequalities needed
for a description. A more recent contribution is the paper \cite{GP}
by Gatermann and Parrilo, who were primarily interested in the
effective aspects of sums of squares decompositions of invariant
polynomials. We will review some of the main results of all three
papers later in more detail.

Apart from these works, positivity and sums of squares questions have
hardly been considered in equivariant settings yet, as far as we
know. It is one purpose of our paper to begin a systematic study. For
this reason, at least some of it is of foundational nature.

Generally, we have tried working over an arbitrary real closed base
field $R$ as much as possible, instead of only over the classical
real numbers $\R$. One reason is that the validity of a result over
arbitrary $R$ often has implications for the case of base field $\R$,
e.g.\ for the existence of complexity bounds, as is well known.

So our setup consists of a reductive linear group $G$ acting on an
affine variety $V$, the action being defined over a real closed field
$R$. Most of our results are specific to the case where the group
$G(R)$ is \sa ally compact, that is, closed and bounded in some $GL_N
(R)$.

After recalling and introducing a few technical notions from real
algebra (Sect.~1), we review the main results of Procesi-Schwarz,
Br\"ocker and Gatermann-Parrilo in Sect.~2. We generalize them to
arbitrary real closed base fields and give alternative, purely
algebraic proofs for some parts. The main result of \cite{GP} is
generalized to the case of a compact (infinite) group of symmetries.
Both our formulation and our proof are more conceptual and less
matrix-based than the original account in \cite{GP}. Sect.~3 contains
basic material. We consider an affine $G$-variety $V$ as above, with
$G(R)$ \sa ally compact. We study the operations of contraction and
extension, between $R[V]$ and $R[V]^G$, on the cones of sums of
squares, and on more general quadratic modules, and also the effect
of the Reynolds operator $\rey$. Also, we study the relation between
quadratic modules and their associated closed sets, both in $V$ and
$V\qu G$. A central result proved here is that $\rey$ maps the cone
$\Sigma R[V]^2$ of sums of squares into itself, and hence $\rey\bigl(
\Sigma R[V]^2\bigr)=\bigl(\Sigma R[V]^2\bigr)^G$, the cone of
$G$-invariant sums of squares. In fact, this characterizes the case
$G(R)$ \sa ally compact, for in Sect.~4 we prove that in all other
cases (with $G$ reductive), there exists $f\in R[G]$ with $\rey(f^2)
<0$. (Here $\rey$ is the Reynolds operator of $G$ acting on itself by
translation). Back to the compact case, we show by examples in
Sect.~5 that $(\Sigma R[V]^2)^G$ usually fails to be finitely
generated, as a preordering in $R[V]^G$. Other instructive examples
are presented in this section as well.

We would like to emphasize the importance of the Reynolds operator
$\rey$ (projection to the invariants), which plays a key role
throughout. Its consequent use replaces the tool of invariant
integration, which is available for $R=\R$ but not otherwise. Note
that the Reynolds operator is well understood from a computational
view point, not only for finite $G$ (see \cite{DK}).

The second part of the paper deals with moment problems having
symmetries. Of course, the base field is now $\R$, the usual real
numbers. For simplicity, let us assume that $K$ is a (basic)
closed subset of $\R^n$. The $K$-moment problem asks for a
characterization of all $K$-moment functionals on $\R[x]=\R[x_1,
\dots,x_n]$, that is, of all linear functionals $L\colon\R[x]\to\R$
arising from integration by some Borel measure on $K$ (all of whose
moments exist). Suppose that $K$ is invariant under the linear group
$G$. Then we may ask for a characterization of the
\emph{$G$-invariant} $K$-moment functionals. This task should
generally be easier, compared to a characterization of all $K$-moment
functionals. We isolate two conditions, the invariant resp.\ the
averaged moment property, (IMP) resp.\ (AMP), under which a
characterization of the $G$-invariant moment functionals is possible
in a reasonable (finitistic) sense. We give various equivalent
characterizations of these conditions. For example, (IMP) is
equivalent to the usual moment property for the image of $K$ in the
orbit space. By means of several examples we demonstrate that indeed
we gain something by considering these properties: (IMP) resp.\ (AMP)
are shown to hold in cases where the usual moment problem fails to be
finitely solvable. On the other hand, we prove an equivariant version
of the main result of \cite{PSch}. A large class of cases was
characterized there by a geometric condition, in which the moment
problem is not finitely solvable; we show that under a similar
equivariant geometric condition, not even the invariant moment problem
is finitely solvable.

The paper closes with a list of open questions. In a previous version of
this paper we asked whether (SMP) implies (IMP) and whether (AMP)
implies (IMP).  Recently, both questions were answered to the negative
by Tim Netzer (see example \ref{timbsp}. We would like to take the
opportunity to thank him for allowing us to include his example in our
paper.

%===================================================================%

\section{Notation and preliminaries}%\Label{notprels}

All rings are commutative and have a unit. For general background on
real algebra and geometry we refer to \cite{BCR}, \cite{KS} and
\cite{PD}. In particular, background on the notion of the real
spectrum may be found in these texts. We will always denote the real
spectrum of a ring $A$ by $\Sper A$.

\begin{lab}\Label{qmpo}
Let $A$ be a ring. A \emph{quadratic module} in $A$ is a subset $M$
of $A$ with $1\in M$ which satisfies $M+M\subset M$ and $a^2M\subset
M$ for every $a\in A$.
A \emph{preordering} in $A$ is a quadratic module $T$ which is closed
under multiplication: $TT\subset T$. The smallest quadratic module in
$A$ (which happens to be a preordering) is $\Sigma A^2$, the set of
all sums of squares in $A$.

The quadratic module generated by $a_1,\dots,a_r\in A$ in $A$
consists of all elements
$$f=s_0+s_1a_1+\cdots+s_ra_r$$
with $s_i\in\Sigma A^2$, and is denoted by $QM_A(a_1,\dots,a_r)$. The
preordering generated by $a_1,\dots,a_r$ in $A$ is the quadratic
module generated by the $2^r$ products $a_1^{e_1}\cdots a_r^{e_r}$,
$e_i\in\{0,1\}$; it is denoted by $PO_A(a_1,\dots,a_r)$. A quadratic
module (resp., preordering) is said to be \emph{finitely generated}
if it can be generated by finitely many elements.
\end{lab}

\begin{lab}
In the first part of our paper we will work over general real closed
base fields $R$. By an affine $R$-variety we always mean an affine
algebraic variety $V$ defined over $R$ which is reduced. Its
coordinate ring is denoted $R[V]$, this is a finitely generated
reduced $R$-algebra. The set of $R$-points on $V$ is denoted $V(R)$.
We will use the notion of \sa\ subsets of $V(R)$. Recall the operator
tilda (see any of the above references): For $S$ a \sa\ subset of
$V(R)$, $\wt S$ is a constructible subset of $\Sper R[V]$, and
$S\mapsto\wt S$ is a bijection between \sa\ sets in $V(R)$ and
constructible sets in $\Sper R[V]$ which preserves all boolean
operations. In particular, $\Sper R[V]=\wt{V(R)}$.
\end{lab}

\begin{lab}\Label{notat}
Let again $A$ be a ring. Associated with a quadratic module $M$ in
$A$ (or in fact with any subset $M$ of $A$) is the closed subset
$$\scrX(M):=\scrX_A(M):=\{\alpha\in\Sper A\colon f(\alpha)\ge0
\text{ for every }f\in M\}$$
of $\Sper A$. If one thinks of the elements of $\Sper A$ as prime
cones in $A$, then $\scrX(M)$ is the set of prime cones which contain
$M$.
If $M$ is finitely generated, say by $a_1,\dots,a_r$, then $\scrX(M)=
\{\alpha\colon a_1(\alpha)\ge0,\dots,\alpha_r(\alpha)\ge0\}$ is a
constructible subset of $\Sper A$.

In the geometric situation we use the following notation. Let $R$ be
a real closed field and $V$ an affine $R$-variety. If $M$ is a
quadratic module in $R[V]$ we write
$$\scrS(M):=\scrS_V(M):=\bigcap_{f\in M}\{x\in V(R)\colon f(x)\ge0\}=
V(R)\cap\scrX_{R[V]}(M)$$
% following \cite{PSch} p.~75
for the `trace' of $\scrX(M)\subset\Sper R[V]=\wt{V(R)}$ in $V(R)$.
This is a closed subset of $V(R)$, which is \sa\ if $M$ is finitely
generated.
\end{lab}

\begin{lab}\Label{dfnsaturated}
Let $M$ be a subset of $A$. The \emph{saturation} of $M$ is the
preordering
\begin{center}
$\Sat(M):=\Sat_A(M):=\{f\in A\colon f\ge0$ on $\scrX(M)\}$
\end{center}
of $A$. We have $M\subset\Sat(M)$ tautologically, and $M$ is called
\emph{saturated} if equality holds. With every subset $X$ of
$\Sper A$ we can associate a saturated preordering of $A$, namely
$$\scrP(X):=\scrP_A(X):=\{a\in A\colon a\ge0\text{ on }X\}.$$
The two operators $\scrP$ and $\scrX$ set up a Galois correspondence
between the subsets of $\Sper A$ and the subsets of $A$, the closed
objects of which are the pro-basic closed subsets of $\Sper A$ on one
side and the saturated preorders of $A$ on the other. In particular,
we have $\Sat(M)=\scrP(\scrX(M))$ for every quadratic module $M$.

See also \cite{Sch:guide} for a more detailed discussion of these
notions.
\end{lab}

%-------------------------------------------------------------------%

\section{Actions of reductive groups on affine $R$-varieties}

\begin{lab}
Let always $R$ be a real closed field. By an $R$-variety $V$ we mean
a reduced separated scheme of finite type over $R$. Most
$R$-varieties in this paper will be affine. Affine $R$-varieties $V$
correspond, in a contravariant functorial way, to finitely generated
$R$-algebras which are reduced (without nilpotent elements $\ne0$);
namely, $V$ corresponds to its coordinate ring, $R[V]$. The set of
$R$-points on $V$ is denoted $V(R)$, as usual.

Always let $G$ be a linear algebraic group over $R$. The Zariski
closure of $G(R)$ in $G$ is an open and closed subgroup of $G$ (of
finite index), so it contains the identity component $G_0$ of $G$.
(This is in fact true for linear groups over any ground field, see
\cite{Bo} Cor.\ V.18.3.) Throughout this paper we will assume that
$G(R)$ is Zariski dense in $G$, or equivalently, that every connected
component of $G$ contains an $R$-point.
% For the questions considered here, this is not a restriction, since
% we can always get this condition by replacing $G$ with the Zariski
% closure of $G(R)$.
\end{lab}

\begin{lab}
Let $G$ act on the affine $R$-variety $V$ by means of a morphism
$$G\times V\to V,\quad(g,x)\mapsto gx$$
of $R$-varieties. To such an action corresponds the dual action
$$\eta=\eta_V\colon R[V]\to R[G]\otimes_RR[V]$$
which is a homomorphism of $R$-algebras. The group $G(R)$ acts on
the $R$-algebra $R[V]$ through algebra automorphisms on the right by
$(f,g)\mapsto f^g$, where $f^g$ is characterized by $f^g(x)=f(gx)$.
Thus, if $\eta_V(f)=\sum_ia_i\otimes f_i$, then
$$f^g=\sum_ia_i(g)\cdot f_i$$
($f\in R[V]$, $g\in G(R)$). Using the assumption that $G(R)$ is
Zariski dense in $G$, it is easy to see that an element $f\in R[V]$
is invariant under this action of $G(R)$ if and only if $\eta(f)=
1\otimes f$.
One writes
$$R[V]^G:=R[V]^{G(R)}=\{f\in R[V]\colon\eta(f)=1\otimes f\}.$$
This is the subring of \emph{$G$-invariants} of $R[V]$.

To simplify language, we will say that a subset of $V(R)$, or of
$R[V]$, is \emph{$G$-invariant}, if it is invariant under the action
of the group $G(R)$.
\end{lab}

\begin{lab}
Recall that a linear group $G$ over $R$ is called \emph{reductive} if
$G$ contains no non-trivial unipotent normal closed subgroup.
(Sometimes it is also required that $G$ is connected.) Since $R$ has
characteristic zero, it is equivalent that every finite-dimensional
$G$-module $M$ is a direct sum of irreducible $G$-modules, or
equivalently, that every $G$-submodule of such $M$ has a
$G$-invariant complement. (See, e.~g., \cite{DK}, Theorems 2.2.13 and
2.2.5.)

Most of our results will focus on the case where $G(R)$ is \sa ally
compact (i.e. $G(R)$ is  closed and bounded in an affine $R$-space).
Such a linear group $G$ over $R$ is always reductive. Indeed, $G(R)$
cannot contain any closed subgroup isomorphic to $R=\G_a(R)$ (the
additive group of $R$), and hence $G$ cannot contain any non-trivial
unipotent closed $R$-subgroup (normal or not).

When $G$ is reductive, then for any affine $G$-variety $V$ the ring
$R[V]^G$ of $G$-invariants is finitely generated as an $R$-algebra,
as was shown by Hilbert (see \cite{DK}, 2.2.10). Thus $R[V]^G$ is the
coordinate ring of an affine $R$-variety $W$, which is called the
\emph{quotient variety} of $V$ by $G$ and is commonly denoted
$V\qu G$:
$$W=V\qu G,\quad R[W]=R[V]^G.$$
Corresponding to the inclusion $R[W]=R[V]^G\subset R[V]$ of
$R$-algebras we have the morphism $\pi\colon V\to W$ of affine
$R$-varieties, called the \emph{quotient morphism}.
\end{lab}

\begin{lab}
As a morphism of $R$-varieties, $\pi\colon V\to W=V\qu G$ is
surjective and open. The induced map $\pi\colon V(R)\to W(R)$ on
$R$-points, however, fails to be surjective in general, and it is an
important problem to describe its image. We will usually write
$$Z:=\pi(V(R))$$
for this image set. This is a \sa\ subset of $W(R)$.

When $G(R)$ is \sa ally compact, the problem of determining $Z$ was
solved by Procesi and Schwarz \cite{PS}, complemented by Br\"ocker
\cite{Br}. Both worked in the case $R=\R$, but their main results
generalize to arbitrary real closed ground field:
\end{lab}

\begin{thm}[Procesi-Schwarz \cite{PS}, Br\"ocker \cite{Br}]
  \Label{procesischw}
Let $G$ be a linear group over $R$ acting on an affine $R$-variety
$V$, and assume that $G(R)$ is \sa ally compact. Let $\pi\colon V\to
V\qu G=W$ be the quotient morphism.
\begin{itemize}
\item[(a)]
The non-empty fibres of the map $\pi\colon V(R)\to W(R)$ are
precisely the $G(R)$-orbits in $V(R)$.
\item[(b)]
For every basic closed set $K$ in $V(R)$, the image $\pi(K)$ is a
basic closed subset of $W(R)$. In particular, $Z=\pi(V(R))$ is basic
closed in $W(R)$.
% explain basic closed somewhere?
\item[(c)]
The map $\pi\colon V(R)\to W(R)$ is \sa ally proper.
% give a reference for what \sa ally proper means?
% dito \sa ally compact?
\end{itemize}
\end{thm}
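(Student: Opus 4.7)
The plan is to reduce each of (a), (b), (c) to the classical case $R=\R$, handled in \cite{PS} and \cite{Br}, via Tarski's transfer principle. The first step is to fix a combinatorial type for the data $(G,V,\text{action},K)$: ambient embeddings, the number and the degrees of all polynomials defining $G$ as a closed subscheme of some $\A^N_R$, its group operations, the variety $V$, the action morphism, and the generators of the basic closed set $K$. Once these are fixed, each of (a)--(c), together with the hypothesis that $G(R)$ is \sa ally compact, becomes a first-order sentence in the language of ordered fields whose parameters are the coefficients of those defining polynomials---modulo a uniform degree bound to be discussed for~(b).

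Part (a) is the easiest: the statement ``$\all x,y\in V(R)$, if $\pi(x)=\pi(y)$ then $\ex g\in G(R)$ with $gx=y$'' is of the form $\forall\exists$ with quantifier-free matrix in the given coefficients, so Tarski reduces it to $R=\R$, where it is the main orbit-separation result of \cite{PS}. The converse inclusion (a $G(R)$-orbit lies in a single fibre of $\pi$) is automatic from $G$-invariance of $\pi$.

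The main obstacle is (b). To express ``$\pi(K)$ is basic closed in $W(R)$'' as a single first-order sentence in the combinatorial parameters we need an a priori bound, in terms of the combinatorial data alone, on the number and degrees of polynomial inequalities describing $\pi(K)$. The bound on the number of inequalities comes from Br\"ocker's stability-index theory applied over $\R$; a degree bound follows from effective quantifier elimination applied to the description $\pi(K)=\{y\in W(R)\colon\ex x\in V(R),\ x\in K,\ \pi(x)=y\}$. With both bounds in place, (b) reads ``there exist coefficients, in a fixed finite range of indices, such that the resulting polynomials cut out $\pi(K)$ on $W(R)$,'' which is first-order; Tarski reduces it to the case $R=\R$ proved in \cite{Br}. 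Specialising $K=V(R)$ yields the final clause on $Z$.

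Finally, (c) can be handled by the same route: ``the preimage of every \sa ally compact subset of $W(R)$ is \sa ally compact in $V(R)$'' becomes first-order once an explicit size bound on the preimages is provided by a Lojasiewicz-type inequality, and then transfers from $\R$ to $R$. Alternatively, once (a) and (b) are at our disposal, (c) admits a direct semi-algebraic proof: the fibres of $\pi$ are $G(R)$-orbits, hence \sa ally compact, and $Z$ is basic closed by (b), so standard compact-fibre arguments in semi-algebraic topology yield that $\pi$ is \sa ally proper.
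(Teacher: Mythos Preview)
Your handling of (a) and (c) matches the paper. For (a) the paper likewise invokes the Tarski principle to reduce to $R=\R$ (remarking that a purely algebraic argument exists but would require more structure theory than is developed there). For (c) it deduces properness from (a) and (b) exactly as in your second alternative, via the standard fact that a \sa\ map with \sa ally compact fibres which sends closed \sa\ sets to closed \sa\ sets is \sa ally proper.

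The gap is in (b). Effective quantifier elimination applied to $\pi(K)=\{y:\ex x\in K,\ \pi(x)=y\}$ bounds the degrees occurring in some quantifier-free \emph{boolean combination} describing $\pi(K)$; it does not bound the degrees in a description as a single \emph{conjunction} $\{q_1\ge0,\dots,q_m\ge0\}$. Br\"ocker's stability index bounds the number $m$ of such inequalities, but says nothing about their degrees, and there is no general mechanism that converts a bounded-degree boolean description of a basic closed set into a bounded-degree basic closed description. Without a uniform degree bound, ``$\pi(K)$ is basic closed'' is not a first-order condition on the combinatorial parameters, and Tarski transfer does not get off the ground. The natural route to such a uniform bound---model-theoretic compactness from the validity of (b) over \emph{every} real closed field---is circular here.

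This is precisely why the paper does not transfer (b) from $\R$ but proves it directly over arbitrary $R$ (Proposition~\ref{imbc}), replacing Br\"ocker's use of Haar integration by the algebraic Reynolds operator $\rey\colon R[V]\to R[V]^G$. Combining $\rey$ with the abstract Positivstellensatz yields
$$\wt{\pi(K)}\>=\>\bigcap_{a\in R[V]}\ \bigcap_{i\in\{0,1\}^r}\scrX_W\bigl(\rey(a^2f_1^{i_1}\cdots f_r^{i_r})\bigr),$$
and compactness of the constructible topology on $\Sper R[W]$ cuts this to a finite subintersection. No a priori degree bound is needed.
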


(Note that we are assuming that $G(R)$ is Zariski dense in $G$, as
always.)

\begin{lab}
In the case $R=\R$, and for the particular case $K=V(\R)$ in (b),
this theorem is due to Procesi and Schwarz \cite{PS}. Br\"ocker
\cite{Br} worked over $\R$ as well. He extended (b) to the case of
arbitrary basic closed $K$ (Prop.\ 5.1), and otherwise studied the
question of how many inequalities are needed to describe the basic
closed set $\pi(K)$.

Both \cite{PS} and \cite{Br} used transcendental arguments in their
proofs (in particular, integration). Therefore the proofs do not
directly generalize to other real closed ground fields. The proof of
(b) from \cite{Br} is easy to transfer to $R\ne\R$, once basic
properties of the Reynolds operator have been established; we'll give
it in \ref{imbc} below. Assertion (c) follows in a standard way from
(a) and~(b).%
\footnote
  {It is well-known that a \sa\ map is \sa ally proper if it maps
  closed \sa\ sets to closed \sa\ sets and if its fibres are \sa ally
  compact.}
As to assertion (a), it is possible to give an entirely algebraic
argument valid over any real closed $R$. However, this uses much
more of structure theory of linear algebraic groups and of invariant
theory than the rest of this paper, and therefore we omit it. Instead
we simply remark that (a) can be proved over $R\ne\R$ by deducing it
from the case $R=\R$ via the Tarski principle.
\end{lab}

\begin{lab}\Label{ineqorb}
A central point in the work of Procesi and Schwarz is the fact that
the inequalities for the image $Z$ of $\pi\colon V(R)\to W(R)$ can be
found constructively. Let us briefly recall how this is done. For
simplicity, assume that $V$ is a linear representation of $G$,
defined over $R$. Since $G(R)$ is \sa ally compact, there exists a
$G$-invariant positive definite inner product $\langle\hvar,\hvar
\rangle$ on $V$. Choose orthonormal linear coordinates $x_1,\dots,
x_n$ on $V$. Putting
$$dp=\sum_{k=1}^n\frac{\partial p}{\partial x_k}\>dx_k$$
for $p\in R[V]$, and transferring the inner product $\langle\hvar,
\hvar\rangle$ to the cotangent bundle of $V$
% i.e., to the free R[V]-module generated by dx_1,...,dx_n
by $\langle dx_i,dx_j\rangle=\delta_{ij}$ ($i$, $j=1,\dots,n$), we
have
$$\langle dp,dq\rangle=\sum_{k=1}^n\frac{\partial p}
{\partial x_k}\cdot\frac{\partial q}{\partial x_k}$$
for $p$, $q\in R[V]$. Let $p_1,\dots,p_m$ be generators of $R[V]^G$,
the $R$-algebra of $G$-invariants. The inner products $\langle dp_i,
dp_j\rangle$ ($i$, $j=1,\dots,m$) are $G$-invariant,
and so the symmetric matrix
$$M=\bigl(\langle dp_i,dp_j\rangle\bigr)_{i,j=1,\dots,m}$$
has entries in $R[V]^G$. Procesi-Schwarz proved that
$$\pi(V(R))=\{z\in W(R)\colon M(z)\ge0\}.$$
(Here $M(z)\ge0$ means that the matrix $M(z)$ is positive
semidefinite.) Since $M(z)\ge0$ if and only if all the $2^m-1$
principal minors of $M(z)$ are $\ge0$, this shows that the \sa\ set
$Z=\pi(V(R))$ is basic closed, and can be described by $2^m-1$
non-strict inequalities.

The actual minimal number of inequalities needed for the description
of $Z$ may be much smaller. For results in this direction see
\cite{Br}.
\end{lab}

\begin{lab}\Label{ginvsos}
We claim that the principal minors of the matrix $M$ above are sums
of squares in $R[V]$. Indeed, $M=JJ^t$ where $J=Jac(p_1,\dots,p_m)=
\bigl(\frac{\partial p_i}{\partial x_k}\bigr)$ is the Jacobian matrix
of $(p_1,\dots,p_m)$. From the Binet-Cauchy theorem (see \cite{Ga},
for example)
it follows that the principal minors of any matrix of the
form $JJ^t$ are sums of squares in the ring generated by the
coefficients of $J$.

Thus, the principal minors of $M$ belong to the cone $S_0=(\Sigma
R[V]^2)^G$ of $G$-invariant sums of squares. Below, we will study
this cone more closely.
\end{lab}

\begin{lab}\Label{defreynolds}
The Reynolds operator is an essential tool for working with actions
of reductive groups. For later reference we collect its definition
and a few basic facts.

Let $G$ be a reductive group over $R$, and let $M$ be a $G$-module
which is finite-dimensional or, more generally, a union of
finite-dimensional submodules (such as the coordinate ring $R[V]$ of
an affine $G$-variety $V$). Since $G$ is reductive, the module $M^G$
of $G$-invariants in $M$ has a $G$-invariant complement $N$, and it
is immediate to see that $N$ is unique. The \emph{Reynolds operator}
$$\rey=\rey_M\colon M\to M^G$$
is defined to be the projection onto the $G$-invariants along the
direct sum decomposition $M=M^G\oplus N$.
\end{lab}

\begin{lab}\Label{rey2props}
For any affine $G$-variety $V$ we have, in particular, the Reynolds
operator
$$\rey=\rey_V\colon R[V]\to R[V]^G=R[V\qu G].$$
The map $\rey$ is $R$-linear and is uniquely characterized by the
following two properties:
\begin{itemize}
\item[(1)]
$\rey(f)=f$ for every $f\in R[V]^G$,
\item[(2)]
$\rey(f^g)=\rey(f)$ for every $f\in R[V]$ and $g\in G(R)$.
\end{itemize}
(This characterization uses that $G(R)$ is Zariski dense in $G$.) See
\cite{DK}, 2.2.2. An important property of $\rey$ is that it is
$R[V]^G$-linear, i.~e., that
$$\rey(a\cdot f)=a\cdot\rey(f)$$
holds for every $a\in R[V]^G$ and $f\in R[V]$.
% DK 2.2.7
\end{lab}

For the calculation of $\rey_V$ it suffices to know $\rey_G\colon
R[G]\to R$, the Reynolds operator of $G$ acting on itself by
translation, thanks to the following lemma:

\begin{lem}\Label{calcrey}
Let $G$ be reductive, and let $V$ be an affine $G$-variety. If
$f\in R[V]$ and $\eta_V(f)=\sum_{i=1}^ma_i\otimes f_i$, then
$$\rey_V(f)=\sum_{i=1}^m\rey_G(a_i)\cdot f_i.$$
\end{lem}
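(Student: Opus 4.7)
The plan is to identify the right-hand side as a composite and then apply the uniqueness characterization of $\rey_V$ given in \ref{rey2props}. Define
\[T\colon R[V]\to R[V],\qquad T(f):=\sum_{i=1}^m\rey_G(a_i)\cdot f_i.\]
Since $\rey_G$ maps into $R[G]^G=R$ (the scalars, as $G$ acts transitively on itself), $T$ is just the composite $R[V]\labelto{\eta_V}R[G]\otimes_R R[V]\labelto{\rey_G\otimes\id}R\otimes_R R[V]=R[V]$; in particular $T$ is $R$-linear and well defined. To conclude $T=\rey_V$, I would verify the two conditions (1), (2) of \ref{rey2props} together with the fact $T(R[V])\subseteq R[V]^G$.

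Condition (1) is immediate: for $f\in R[V]^G$ we have $\eta_V(f)=1\otimes f$, hence $T(f)=\rey_G(1)\cdot f=f$. For condition (2), I first compute $\eta_V(f^h)$ for $h\in G(R)$: from $f^h(gx)=f(hgx)=\sum_i a_i(hg)f_i(x)$ one reads off $\eta_V(f^h)=\sum_i\ell_h(a_i)\otimes f_i$, where $\ell_h(a)(g):=a(hg)$. The invariants of $R[G]$ under this action are again just the scalars, so $\rey_G\circ\ell_h=\rey_G$, and therefore $T(f^h)=\sum_i\rey_G(\ell_h a_i)f_i=\sum_i\rey_G(a_i)f_i=T(f)$.

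To see $T(f)\in R[V]^G$, I would exploit the associativity of the action, $f(ghx)=f(g(hx))$: expanding the common expression in two ways yields the identity
\[\sum_i r_h(a_i)\otimes f_i\;=\;\sum_i a_i\otimes f_i^h\]
in $R[G]\otimes R[V]$, where $r_h(a)(g):=a(gh)$. Applying $\rey_G\otimes\id$ to both sides and using $\rey_G\circ r_h=\rey_G$, the left side becomes $T(f)$ while the right side becomes $T(f)^h$ (here we use $\rey_G(a_i)\in R$, so the action of $h$ on $T(f)$ only affects the $f_i$). Hence $T(f)^h=T(f)$ for every $h\in G(R)$, so $T(f)\in R[V]^G$. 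The uniqueness assertion of \ref{rey2props} now forces $T=\rey_V$, as required.

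The main point to check is the invariance $\rey_G\circ\ell_h=\rey_G\circ r_h=\rey_G$. In each case the fixed subspace of $R[G]$ is the one-dimensional line of scalars, and the $G$-invariant complement coincides for the two translation actions (the sum of the non-trivial isotypic components in either decomposition), so $\rey_G$ is the same projection in both cases and is annihilated on the kernel by either action. Once this is in place, the rest is formal manipulation of the coaction $\eta_V$.
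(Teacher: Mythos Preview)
Your proof is correct and essentially the same as the paper's: both define the candidate as $(\rey_G\otimes\id)\circ\eta_V$ and verify properties (1) and (2) of \ref{rey2props}, with the key step for (2) being $\eta_V(f^h)=\sum_i\ell_h(a_i)\otimes f_i$ (the paper derives this from the coassociativity square and evaluation in the first tensor factor, you read it off pointwise---same identity). Your extra verification that $T(f)\in R[V]^G$, via $\rey_G\circ r_h=\rey_G$, is not needed---(1) and (2) alone already pin down $\rey_V$ among $R$-linear endomorphisms of $R[V]$, which is how the paper applies \ref{rey2props}---but it is correct and harmlessly strengthens the argument.
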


\begin{proof}
For $f\in R[V]$ with $\eta_V(f)=\sum_ia_i\otimes f_i$ define $r(f):=
\sum_i\rey_G(a_i)\cdot f_i$. Then $r\colon R[V]\to R[V]$ is an
$R$-linear map,
and we'll show that $r$ satisfies properties (1) and
(2) of \ref{rey2props}. If $f\in R[V]^G$ then $\eta_V(f)=1\otimes
f$, and so $r(f)=f$. It remains to show $r(f^g)=r(f)$ for $f\in R[V]$
and $g\in G(R)$.
By the commutative square
$$\xymatrix@C=+40pt{
R[V] \ar[r]^{\eta_V} \ar[d]_{\eta_V} & R[G]\otimes R[V]
  \ar[d]^{\id\otimes\eta_V} \\
R[G]\otimes R[V] \ar[r]^{\eta_G\otimes\id} &
  R[G]\otimes R[G]\otimes R[V]}$$
we have $\sum_ia_i\otimes\eta_V(f_i)=\sum_i\eta_G(a_i)\otimes f_i$
(in $R[G]\otimes R[G]\otimes R[V]$). Applying the homomorphism
$a\otimes b\otimes h\mapsto a(g)\cdot b\otimes h$ to both sides we
get
$$\eta_V\Bigl(\sum_ia_i(g)\,f_i\Bigr)=\sum_ia_i(g)\cdot\eta_V(f_i)=
\sum_ia_i^g\otimes f_i$$
(in $R[G]\otimes R[V]$). Since $f^g=\sum_ia_i(g)\,f_i$, we conclude
$$r(f^g)=\sum_i\rey_G(a_i^g)\,f_i=\sum_i\rey_G(a_i)\,f_i=r(f).$$
\end{proof}

If $R=\R$ and the group $G(\R)$ is compact, the Reynolds operator is
just averaging over the $G(\R)$-orbits (c.f.\ \cite{DK}, p.~45):

\begin{prop}\Label{reyhaar}
Let $G(\R)$ be compact, and let $V$ be an affine $G$-variety. Then
the Reynolds operator on $V$ is characterized by
$$(\rey_Vf)(x)=\int_{G(\R)}f(gx)\>d\lambda(g)$$
($f\in\R[V]$, $x\in V(\R)$). Here $\lambda$ is the normalized Haar
measure on $G(\R)$.
\end{prop}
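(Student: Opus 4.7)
The plan is to define the integral operator $\tau\colon\R[V]\to\R[V]$ by
$$(\tau f)(x):=\int_{G(\R)}f(gx)\,d\lambda(g),$$
and then show $\tau=\rey_V$ by invoking the uniqueness part of \ref{rey2props}.

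First I would check that $\tau f$ really is polynomial in $x$ (a priori it is only a continuous function on $V(\R)$). Writing $\eta_V(f)=\sum_{i=1}^m a_i\otimes f_i$ with $a_i\in\R[G]$ and $f_i\in\R[V]$, the identity $f(gx)=\sum_i a_i(g)\,f_i(x)$ gives
$$(\tau f)(x)=\sum_{i=1}^m\Bigl(\int_{G(\R)}a_i\,d\lambda\Bigr)\,f_i(x),$$
an $\R$-linear combination of the $f_i$. Hence $\tau f\in\R[V]$, and $\tau$ is $\R$-linear by linearity of integration. Incidentally this shows that $\tau$ has exactly the shape predicted by Lemma~\ref{calcrey}, so an alternative route would be to check instead that $\rey_G(a)=\int_{G(\R)}a\,d\lambda$ for every $a\in\R[G]$; but it is more efficient to verify the characterizing properties of \ref{rey2props} directly for $\tau$.

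Property (1): if $f\in\R[V]^G$, then $f(gx)=f(x)$ for every $g\in G(\R)$, so $(\tau f)(x)=f(x)\cdot\lambda(G(\R))=f(x)$ since $\lambda$ is normalized. Property (2): for any $g\in G(\R)$,
$$(\tau f^g)(x)=\int_{G(\R)}f^g(hx)\,d\lambda(h)=\int_{G(\R)}f(ghx)\,d\lambda(h)=\int_{G(\R)}f(kx)\,d\lambda(k)=(\tau f)(x),$$
where the third equality is the substitution $k=gh$ together with left translation invariance of Haar measure. By the uniqueness assertion in \ref{rey2props} we conclude $\tau=\rey_V$; in particular $\tau f$ automatically lies in $\R[V]^G$.

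There is no real obstacle here: the one potentially subtle point is ensuring $\tau f$ is polynomial rather than merely continuous, and that is handled cleanly by the dual-action decomposition via $\eta_V$. The rest is a two-line verification using normalization and left invariance of $\lambda$.
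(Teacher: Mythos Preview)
Your proof is correct. The paper itself does not give a proof of this proposition; it simply states the result with a reference to \cite{DK}, p.~45. Your argument via the characterizing properties \ref{rey2props}(1) and~(2) is exactly the natural one, and the use of the dual action $\eta_V$ to confirm that $\tau f$ is polynomial (rather than merely continuous) is the right way to handle that point. One small remark: the uniqueness assertion in \ref{rey2props} is phrased for maps into $R[V]^G$, so strictly speaking you should either observe beforehand that $\tau f$ is $G$-invariant (immediate from bi-invariance of Haar measure on the compact group $G(\R)$), or note that the same uniqueness holds among linear maps $\R[V]\to\R[V]$ because any such map satisfying~(2) vanishes on each non-trivial irreducible summand of $\R[V]$. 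Either way the gap is cosmetic.
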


\begin{rem}\Label{reyfinite}
In particular, if the group $G$ is finite, then
$$\rey_V(f)=\frac1{|G|}\sum_{g\in G(R)}f^g$$
for every $f\in R[V]$. Of course, this is true for any $R$, and is
not restricted to $R=\R$.
\end{rem}

\begin{lem}\Label{reyequivar}
For every equivariant morphism $\varphi\colon V\to V'$ of affine
$G$-varieties, the square
$$\xymatrix{
R[V'] \ar[r]^{\varphi^*} \ar[d]_{\rey_{V'}} & R[V] \ar[d]^{\rey_V} \\
R[V']^G \ar[r]^{\varphi^*} & R[V]^G}$$
commutes.
\end{lem}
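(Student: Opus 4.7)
The plan is to deduce the identity from the explicit formula of Lemma~\ref{calcrey}. The one nontrivial input is the compatibility of $\varphi^*$ with the coactions $\eta_V$ and $\eta_{V'}$, which is dual to the defining equivariance of $\varphi$.

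First I would verify that equivariance of $\varphi\colon V\to V'$ dualizes to
$$\eta_V\circ\varphi^*=(\id_{R[G]}\otimes\varphi^*)\circ\eta_{V'}$$
as homomorphisms $R[V']\to R[G]\otimes_R R[V]$. This is purely formal: equivariance says that the square
$$\xymatrix{G\times V \ar[r]^{\id_G\times\varphi}\ar[d] & G\times V' \ar[d] \\ V\ar[r]^\varphi & V'}$$
with the action morphisms as vertical arrows commutes, and the displayed identity is its contravariant translation.

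Now fix $f\in R[V']$ and write $\eta_{V'}(f)=\sum_{i=1}^m a_i\otimes f_i$ with $a_i\in R[G]$ and $f_i\in R[V']$. The compatibility above then yields $\eta_V(\varphi^*f)=\sum_i a_i\otimes\varphi^*(f_i)$. Applying Lemma~\ref{calcrey} on both sides gives
$$\rey_V(\varphi^*f)=\sum_i\rey_G(a_i)\cdot\varphi^*(f_i)=\varphi^*\Bigl(\sum_i\rey_G(a_i)\cdot f_i\Bigr)=\varphi^*\bigl(\rey_{V'}(f)\bigr),$$
where the middle equality uses that $\varphi^*$ is $R$-linear and $\rey_G(a_i)\in R$. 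This is exactly the commutativity of the square.

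There is no genuine obstacle; the argument is entirely formal once Lemma~\ref{calcrey} is in hand, and the only step requiring a moment's thought is the dualization of the equivariance square, which is routine. An alternative route would be to invoke the characterization of $\rey_{V'}$ by properties (1) and~(2) of \ref{rey2props} directly, using that $\varphi^*$ is a morphism of $G$-modules and hence preserves both the subspace of invariants and its canonical $G$-invariant complement; but the approach via Lemma~\ref{calcrey} is the shortest.
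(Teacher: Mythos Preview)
Your proof is correct, but it takes a different route from the paper's. The paper argues directly from the $G$-module decomposition: writing $R[V]=R[V]^G\oplus M$ and $R[V']=R[V']^G\oplus M'$ with $M$, $M'$ the unique $G$-invariant complements, it observes that $\varphi^*$ is a $G$-module homomorphism and therefore respects these decompositions, which immediately gives the commutativity. This is precisely the ``alternative route'' you sketch at the end. Your main argument instead pushes the computation through Lemma~\ref{calcrey} and the coaction compatibility, which is equally valid but slightly more roundabout; the decomposition argument is in fact the shorter of the two, contrary to your closing remark. Both approaches are entirely formal, and yours has the minor virtue of making the dependence on $\rey_G$ explicit.
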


\begin{proof}
There are unique $G$-submodules $M$ of $R[V]$ and $M'$ of $R[V']$
such that $R[V]=R[V]^G\oplus M$ and $R[V']=R[V']^G\oplus M'$. The map
$\varphi^*\colon R[V']\to R[V]$ is a $G$-module homomorphism and
hence respects these decompositions. The claim follows from this.
\end{proof}

\begin{lab}\Label{dfnsemiinv}
Let the reductive group $G$ act on the affine $R$-variety $V$, and
let $\omega$ be the isomorphism type of a finite-dimensional
irreducible $G$-module. A polynomial $f\in R[V]$ is said to be
\emph{semi-invariant} of type $\omega$ if the $G$-submodule of $R[V]$
generated by $f$ is isomorphic to a direct sum of copies of $\omega$.
\end{lab}

The following generalizes a theorem by Gatermann and Parrilo
(\cite{GP}, Theorems 5.3 and 6.2):

\begin{thm}\Label{gatparr}
Let $V$ be an affine $G$-variety, and assume that $G(R)$ is \sa ally
compact. Let $f\in R[V]$ be a $G$-invariant sum of squares. Then $f$
can be written
$$f=f_1^2+\cdots+f_m^2$$
in such a way that every $f_i\in R[V]$ is semi-invariant (of some
type~$\omega_i$).
\end{thm}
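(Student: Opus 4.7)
The plan is to realize $f$ as the image, under the multiplication map from a symmetric square, of a $G$-invariant positive semidefinite (PSD) element, and then to decompose this element along the isotypic decomposition of the underlying $G$-module. First I would write $f=\sum_{i=1}^kh_i^2$ and let $U\subset R[V]$ be the finite-dimensional $G$-submodule generated by $h_1,\dots,h_k$ under the action of $G$. Since $G(R)$ is \sa ally compact (hence reductive), $U$ decomposes as $U=U_1\oplus\cdots\oplus U_r$ into isotypic components of pairwise distinct irreducible types $\omega_1,\dots,\omega_r$, and by definition every element of $U_j$ is semi-invariant of type $\omega_j$. I would then fix a $G$-invariant positive definite inner product on $U$ (it exists because $G(R)$ is \sa ally compact); the decomposition is orthogonal, and each $U_j$ is self-dual as a $G$-module.

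Consider the multiplication map $\mu\colon\Sym^2U\to R[V]$, which is $G$-equivariant. The element $\xi_0:=\sum_ih_i\odot h_i\in\Sym^2U$ lies in the convex cone generated by the squares $u\odot u$ (the PSD cone of $\Sym^2U$) and satisfies $\mu(\xi_0)=f$. Applying the Reynolds operator of the $G$-module $\Sym^2U$ and using that $f$ is $G$-invariant, I obtain $\xi:=\rey_{\Sym^2U}(\xi_0)\in(\Sym^2U)^G$ with $\mu(\xi)=\rey_V(\mu(\xi_0))=\rey_V(f)=f$.

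Two facts about $\xi$ remain to check. First, $\xi$ is still PSD in $\Sym^2U$. This is the main technical step: for $R=\R$ with $G(\R)$ compact it follows from Proposition~\ref{reyhaar}, since the PSD cone is closed, convex, and $G$-invariant and $\rey$ is Haar averaging; for general real closed $R$ I would deduce it from the central result of Section~3 applied to the linear $G$-representation $W=U^*$, whose coordinate ring is $R[W]=\Sym(U)$ and whose degree-two sums of squares coincide with the PSD cone in $\Sym^2U$. Second, $\xi$ is block-diagonal along the isotypic decomposition, because
$$(\Sym^2U)^G=\bigoplus_j(\Sym^2U_j)^G\oplus\bigoplus_{j<l}(U_j\otimes U_l)^G$$
and Schur's lemma combined with the self-duality $U_j\cong U_j^*$ yields $(U_j\otimes U_l)^G\cong\Hom_G(U_j,U_l)=0$ for $j\neq l$. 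Hence $\xi=\sum_j\xi_j$ with $\xi_j\in\Sym^2U_j$, and PSD-ness of $\xi$ forces each $\xi_j$ to be PSD in $\Sym^2U_j$.

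Finally, PSD-ness of $\xi_j$ in $\Sym^2U_j$ produces a decomposition $\xi_j=\sum_ig_{ij}\odot g_{ij}$ with $g_{ij}\in U_j$; each such $g_{ij}$ is semi-invariant of type $\omega_j$ by construction. Applying $\mu$ then gives
$$f=\mu(\xi)=\sum_j\mu(\xi_j)=\sum_{i,j}g_{ij}^2,$$
which is the desired representation. The main obstacle is the PSD preservation of $\rey$ on $\Sym^2U$; this is exactly the place where \sa\ compactness of $G(R)$ enters in an essential way, since (as remarked in the introduction and Section~4) failure of this property characterizes the non-compact case.
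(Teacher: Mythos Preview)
Your proposal is correct and follows essentially the same approach as the paper: produce a $G$-invariant PSD Gram-type element via the Reynolds operator and convexity (Proposition~\ref{reyconvex}), use the isotypic decomposition together with self-duality and Schur's lemma to obtain block-diagonality, and then diagonalize each block. Your packaging is slightly more streamlined in that you work directly with the finite-dimensional $G$-submodule $U\subset R[V]$ generated by the $h_i$, thereby avoiding the paper's preliminary reduction to the case where $V$ is a linear representation and $f$ is homogeneous.
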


In short: An invariant sum of squares is a sum of squares of
semi-invariants. (Note that it is \emph{not} true conversely that
every sum of squares of semi-invariants is invariant.)

The main result of \cite{GP} corresponds to the case of \ref{gatparr}
where $G$ is finite and $V$ is a linear representation of $G$.

\begin{lab}\Label{gramethod}
The proof follows the ideas of \cite{GP}. However, we have translated
them into a more conceptual and less matrix-based setting. It uses
what has been called the ``Gram matrix method'' \cite{PW}, for the
characterization of sums of squares of polynomials. We start by
reviewing and rephrasing this approach.

At the beginning, $k$ can be any field of characteristic
not two. Let $V$ be a finite-dimensional $k$-vector space, and let
$$S(V)=\bigoplus_{n\ge0}S^n(V)$$
be the symmetric algebra of $V$, where $S^n(V)$ is the $n$-th
symmetric power of $V$. Let $V^\du$ be the dual vector space of $V$.
The symmetric algebra
$$S(V^\du)=\bigoplus_{n\ge0}S^n(V^\du)$$
of $V^\du$ is canonically identified with the ring of polynomials on
$V$, namely, $S^n(V^\du)$ is identified with the homogeneous
polynomials of degree $n$.

Fix a degree $d\ge0$. Any (symmetric) bilinear form
$$\gamma\colon S^d(V)\times S^d(V)\to k$$
determines a homogeneous polynomial $p_\gamma$ on $V$ of degree $2d$,
that is, an element of $S^{2d}(V^\du)$. Indeed, $\gamma$ is an
element of
$$(S^d(V)\otimes S^d(V))^\du=S^d(V)^\du\otimes S^d(V)^\du=S^d(V^\du)
\otimes S^d(V^\du),$$
and applying the product map $S^d(V^\du)\otimes S^d(V^\du)\to S^{2d}
(V^\du)$ to this element yields $p_\gamma$. As a map from $V$ to $k$,
$p_\gamma$ is given by
$$p_\gamma(v)=\gamma(v^d,v^d)\quad(v\in V),$$
where $v^d$ denotes the $d$-th power in the symmetric algebra
$S(V)$. In analogy to the terminology in \cite{PW}, we'll say that
$\gamma$ is a \emph{Gram form} for the homogeneous polynomial $f$ (of
degree $2d$) if $p_\gamma=f$.

Now assume that $k=R$ is a real closed field. Then a homogeneous
polynomial $f\in S^{2d}(V^\du)$ (of degree $2d$) is a sum of squares
of (homogeneous) polynomials (of degree $d$) if and only if $f$ has a
Gram form which is positive semidefinite (psd). In fact, $f$ is a sum
of $r$ squares if and only if $f$ has a psd Gram form of rank $\le
r$. Note that the set of Gram forms of $f$ is an affine-linear
subspace of $S^2(S^d(V^\du))$, the space of symmetric bilinear forms
on $S^d(V)$. The set of psd Gram forms of $f$ is therefore a closed
convex \sa\ subset of $S^2(S^d(V^\du))$.
\end{lab}

\begin{lab}
We now give the proof of Theorem \ref{gatparr}.
Since $V$ has an equivariant closed embedding into a linear
representation $W$ of $G$, and since $R[W]^G\to R[V]^G$ is
surjective, it is clear that we can assume that $V$ is a linear
representation of $G$, i.~e.\ a finite-dimensional $R$-vector space
with a linear $G$-action. We will give the proof for forms, i.~e.,
homogeneous polynomials. This is not a restriction of generality.

Thus assume $f$ is a homogeneous polynomial on $V$ of degree $2d$
which is $G$-invariant and which is a sum of squares of polynomials.
Thus $f$ has a psd Gram form $\gamma\in S^2(S^d(V^\du))$, which is a
psd symmetric bilinear form
$$\gamma\colon S^d(V)\times S^d(V)\to R$$
with $p_\gamma=f$ (see \ref{gramethod}). Consider the action of $G$
on $S^2(S^d(V^\du))$ which is induced by its action on $V$. For every
$g\in G(R)$, $\gamma^g$ is again a psd Gram form of $f$. Since the
set of psd Gram forms of $f$ is convex, it contains a $G$-invariant
element by Proposition \ref{reyconvex} below.%
\footnote
  {This forward reference is for ease of exposition only and does not
  create a logical circle.}
For what follows, we can therefore assume that $\gamma$ is
$G$-invariant.

Consider the decomposition of the $G$-module $S^d(V)$ into isotypical
components:
\begin{equation}\Label{isotyp}
S^d(V)=M_1\oplus\cdots\oplus M_r.
\end{equation}
Thus the $M_i$ are $G$-invariant, and $\Hom_G(M_i,M_j)=0$ for $i\ne
j$. The group $G(R)$ being \sa ally compact, every irreducible
representation of $G$ is self-dual, i.~e., isomorphic to its dual.
(Choose a $G$-invariant positive inner product to see this.) Hence
$M_i^\du\cong M_i$ as $G$-modules, for each $i$.

The decomposition \eqref{isotyp} is orthogonal with respect to
$\gamma$, that is, $\gamma(x,y)=0$ for all $x\in M_i$ and $y\in M_j$
whenever $i\ne j$. Indeed, by the $G$-invariance of $\gamma$ we have
$\gamma(x,y)=\gamma(gx,gy)$ for all $x$, $y\in S^d(V)$ and $g\in
G(R)$. In particular, the  linear map $M_j\to M_i^\du$ induced by the
restriction of $\gamma$ to $M_i\oplus M_j$ is $G$-equivariant. Since
$M_i^\du\cong M_i$ as $G$-modules, as mentioned before, it follows
that $M_j\to M_i^\du$ must be the zero map, which means that $M_i$
and $M_j$ are orthogonal.

Diagonalizing the restriction of $\gamma$ to each $M_i$ separately,
we see from this that $f$ can be written as a sum of squares of
semi-invariant polynomials.
\qed
\end{lab}

%-------------------------------------------------------------------%

\section{Quadratic modules and \sa\ sets in the orbit variety}

\begin{lab}
When studying quadratic modules $M$ in the coordinate ring of an
affine $R$-variety $V$, it is usually necessary to work with the
(pro-basic and closed) subsets $\scrX(M)$ in the real spectrum of
$R[V]$, rather than with their traces $\scrS(M)$ in $V(R)$ (see
\ref{notat} for the notation), unless one knows that the quadratic
module $M$ is finitely generated. Since we cannot always assume this,
we are using the real spectrum.

To have a notation available which is less awkward, let us introduce
the following shorthands. Let $V$ be an affine $R$-variety with
coordinate ring $R[V]$, and let $M$ be a quadratic module in $R[V]$.
We write
$$\scrX_V(M):=\scrX_{R[V]}(M)$$
for the closed subset of $\wt{V(R)}=\Sper R[V]$ which is associated
with $M$. Given a subset $X$ of $\wt{V(R)}$, we write
$$\scrP_V(X):=\scrP_{R[V]}(X)$$
for the cone of elements in $R[V]$ that are non-negative on $X$.
\end{lab}

\begin{lab}
In the following, let always $G$ be a reductive group over $R$, let
$V$ be an affine $G$-variety and $\pi\colon V\to V\qu G=W$ the
quotient morphism. By $\pi$ we also denote the induced map $\pi\colon
V(R)\to W(R)$ on real points. We will always write $Z:=\pi(V(R))$ for
the image set of $\pi\colon V(R)\to W(R)$. This is a \sa\ subset of
$W(R)$. By a theorem of Luna \cite{Lu}, $Z$ is closed in $W(R)$. (We
will not use this fact.) Accordingly,
$$\wt\pi\colon\Sper R[V]=\wt{V(R)}\to\wt{W(R)}=\Sper R[W]$$
denotes the associated map of real spectra. The image of $\wt\pi$ is
$\wt Z$.
\end{lab}

The following observations are pure formalities and have nothing to
do with the specific situation:

\begin{lem}\Label{lem1po}
Let $M$ be an arbitrary quadratic module in $R[V]$. Put $N:=M\cap
R[W]$, and write $X:=\scrX_V(M)$ and $Y:=\scrX_W(N)$.
\begin{itemize}
\item[(a)]
$\wt\pi(X)\subset Y$.
\item[(b)]
If $M$ is saturated in $R[V]$, then $N$ is saturated in $R[W]$. In
fact, $N=\scrP_W(\tilde\pi(X))$, and hence $\wt\pi(X)=Y$.
\item[(c)]
Equality $X=\wt\pi^{-1}(Y)$ holds if and only if $M\subset\Sat_V(N)$.
In particular, then, $\wt\pi(X)=\wt Z\cap Y$.
\end{itemize}
\end{lem}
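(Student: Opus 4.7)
The plan is to prove all three parts via the Galois correspondence, leaning on the single fundamental identity
\[
f(\alpha)=f(\wt\pi(\alpha))\quad\text{for }f\in R[W]\subset R[V],\ \alpha\in\Sper R[V],
\]
which merely reflects that $\wt\pi(\alpha)$ is the restriction $\alpha\cap R[W]$ of the prime cone $\alpha$. Part (a) then follows immediately: any $\alpha\in X$ contains $M$, hence its restriction $\wt\pi(\alpha)$ contains $M\cap R[W]=N$, putting $\wt\pi(\alpha)$ in $Y=\scrX_W(N)$.

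For part (b), I would exploit the saturation hypothesis $M=\scrP_V(X)$ to compute: for $f\in R[W]$, $f\in N$ iff $f\in M$ iff $f\geq0$ on $X$, and by the key identity this last condition is equivalent to $f\geq0$ on $\wt\pi(X)$. Hence
\[
N=\scrP_W(\wt\pi(X)),
\]
and since any cone of the form $\scrP_W(S)$ is automatically saturated (via $\scrP\scrX\scrP=\scrP$, recorded in \ref{dfnsaturated}), this immediately gives $N$ saturated. Combining with the Galois identity $Y=\scrX_W(N)=\scrX_W(\scrP_W(\wt\pi(X)))$ exhibits $Y$ as precisely the pro-basic closure of $\wt\pi(X)$. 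Together with the inclusion $\wt\pi(X)\subset Y$ from (a), the desired equality $\wt\pi(X)=Y$ is then equivalent to the statement that $\wt\pi(X)$ is already pro-basic closed in $\Sper R[W]$; verifying this is the one non-formal step and the main obstacle, which I would handle by showing that the spectral map $\wt\pi$ sends pro-basic closed sets to pro-basic closed sets.

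For part (c), I would again use the key identity, applied now in the direction $\scrX_V(N)=\wt\pi^{-1}(\scrX_W(N))=\wt\pi^{-1}(Y)$. Since $N\subset M$ automatically gives $X=\scrX_V(M)\subset\scrX_V(N)=\wt\pi^{-1}(Y)$, applying $\scrP_V$ converts the equality $X=\wt\pi^{-1}(Y)$ into $\Sat_V(M)=\Sat_V(N)$; as $\Sat_V(N)\subset\Sat_V(M)$ is automatic from $N\subset M$, this is in turn equivalent to the single condition $M\subset\Sat_V(N)$. When it holds, $\wt\pi(X)=\wt\pi(\wt\pi^{-1}(Y))=Y\cap\wt\pi(\Sper R[V])$, and the tilde-transfer identification $\wt\pi(\Sper R[V])=\wt\pi(\wt{V(R)})=\wt{\pi(V(R))}=\wt Z$ yields $\wt\pi(X)=\wt Z\cap Y$, completing the lemma.
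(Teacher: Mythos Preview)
Your arguments for (a), (c), and the core of (b)---the identity $N=\scrP_W(\wt\pi(X))$ and the saturation of $N$---are correct and essentially identical to the paper's proof. You also spell out the ``in particular'' clause of (c) in more detail than the paper does, correctly invoking the compatibility $\wt\pi(\wt{V(R)})=\wt{\pi(V(R))}=\wt Z$.

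The one real issue is the final clause of (b). You correctly isolate the difficulty: passing from $Y=\scrX_W\scrP_W(\wt\pi(X))$ to $\wt\pi(X)=Y$ requires $\wt\pi(X)$ to already be pro-basic closed. But your proposed fix---that the spectral map $\wt\pi$ carries pro-basic closed sets to pro-basic closed sets---is false in the ``pure formality'' generality in which the lemma is stated. Take the inclusion $R[t]\hookrightarrow R[t,t^{-1}]$ and let $M=\scrP(\Sper R[t,t^{-1}])$, which is saturated with $X=\Sper R[t,t^{-1}]$. Then $N=M\cap R[t]$ consists of all polynomials nonnegative on $R\setminus\{0\}$, hence (by continuity) nonnegative on all of $R$, so $Y=\Sper R[t]$; yet $\wt\pi(X)=\{\alpha:t(\alpha)\ne0\}$ omits the closed point $t=0$ and is strictly contained in $Y$. (Nor is $\wt\pi(X)$ pro-basic closed: any $f\in R[t]$ with $f(0)<0$ is also negative at the adjacent orderings $0^+$ and $0^-$, which do lie in $\wt\pi(X)$.) So the route you sketch cannot succeed.

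In fact the paper's own proof stops at $N=\scrP_W(\wt\pi(X))$ and never argues for $\wt\pi(X)=Y$; the remark immediately following the lemma retreats to the weaker (and correct) assertion that $Y$ is the pro-basic closed hull of $\wt\pi(X)$. So the literal equality in the statement of (b) appears to be a slip in the paper rather than a gap in your understanding.
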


If $M$ is saturated in $R[V]$ then (b) implies that $Y$ is the
pro-basic closed hull of $\wt\pi(X)$. One instance when the
equivalent conditions of (c) are satisfied is when $M$ is generated
by elements of $R[W]$, as a quadratic module in $R[V]$.

\begin{proof}
Denote the inclusion $R[W]\subset R[V]$ by $i$. (a) is obvious.
To prove (b), note that (a) says $N\subset\scrP_W(\wt\pi(X))$.
Conversely let $b\in\scrP_W(\wt\pi(X))$. Then $i(b)\ge0$ on $X$, so
$i(b)\in\scrP_V(X)=M$, which means $b\in N$.
To prove (c), note that $\scrX_V(N)=\wt\pi^{-1}(Y)$. From $N\subset
M$ it follows that $\Sat_V(N)\subset\Sat_V(M)$. Hence $M\subset\Sat_V
(N)$ $\iff$ $\Sat_V(M)=\Sat_V(N)$ $\iff$ $X=\wt\pi^{-1}(Y)$.
\end{proof}

For more interesting results, we have to assume that $G(R)$ is
\sa ally compact.

\begin{thm}\Label{invconvex}
Assume that $G(R)$ is \sa ally compact. Then for every $G$-invariant
convex subset $C$ of $R[V]$, the Reynolds operator $\rey_V$ satisfies
$\rey_V(C)=C\cap R[V]^G$. In particular, if $C$ is non-empty then $C$
contains a $G$-invariant element.
\end{thm}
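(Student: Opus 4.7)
The inclusion $\rey_V(C)\supseteq C\cap R[V]^G$ is immediate: every $a\in C\cap R[V]^G$ satisfies $\rey_V(a)=a$ by property~(1) of~\ref{rey2props}. The substance of the theorem is therefore the reverse inclusion $\rey_V(C)\subseteq C$, and my plan is to prove the sharper statement that, for every $f\in R[V]$, the element $\rey_V(f)$ already lies in the convex hull $K$ of the $G(R)$-orbit of $f$. Since $K\subseteq C$ whenever $f\in C$ (by $G$-invariance and convexity of $C$), this would immediately yield $\rey_V(f)\in C$.

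The first step is to reduce to a finite-dimensional setting. Because $R[V]$ is the union of its finite-dimensional $G$-submodules, the submodule $W\subseteq R[V]$ generated by $f$ is finite-dimensional; using Lemma~\ref{calcrey} applied inside $R[G]\otimes W$ one sees that $\rey_V(f)$ in fact equals the Reynolds projection $\rey_W(f)$ computed inside $W$. The orbit $\{f^g:g\in G(R)\}$ is the continuous image of the \sa ally compact group $G(R)$ in $W$, hence is itself \sa ally compact; Carath\'eodory's theorem, valid over any real closed field, then exhibits $K:=\mathrm{conv}\{f^g:g\in G(R)\}$ as the continuous image of $G(R)^{d+1}$ times the standard $d$-simplex (with $d=\dim_RW$), so $K$ is a \sa ally compact, convex and $G$-invariant subset of $W$.

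The crux is then to show $\rey_W(f)\in K$. For this I would fix a $G$-invariant positive definite inner product $\langle\hvar,\hvar\rangle$ on $W$, available by \sa\ compactness of $G(R)$ exactly as in~\ref{ineqorb}. The function $v\mapsto\langle v,v\rangle$ is continuous and strictly convex, and $K$ is \sa ally compact, so it attains its minimum on $K$ at a unique point $v_0\in K$. For every $g\in G(R)$ the point $v_0^g$ again lies in $K$ and has the same norm as $v_0$, so uniqueness forces $v_0^g=v_0$; hence $v_0\in W^G$. Writing $v_0=\sum_i t_i f^{g_i}$ as a convex combination and using that $\rey_W$ is $R$-linear, acts as the identity on $W^G$, and satisfies $\rey_W(f^g)=\rey_W(f)$, one obtains
$$v_0=\rey_W(v_0)=\sum_i t_i\rey_W(f^{g_i})=\sum_i t_i\rey_W(f)=\rey_W(f),$$
so $\rey_W(f)=v_0\in K\subseteq C$, completing the argument.

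The main obstacle is the step that has to replace Haar-averaging. Over $R=\R$ one would integrate $g\mapsto f^g$ against normalised Haar measure and invoke that the integral of a continuous map into a closed convex set lies in that set (essentially Proposition~\ref{reyhaar}); over an arbitrary real closed $R$ this is unavailable, and the uniqueness-of-minimiser argument is the natural algebraic substitute. It reduces the theorem to the existence of a $G$-invariant positive definite inner product on $W$, which is the same basic fact already invoked in~\ref{ineqorb}. The remaining ingredients---\sa\ compactness of the convex hull via Carath\'eodory, attainment of a minimum of a continuous function on a \sa ally compact set, strict convexity of the Euclidean norm---are standard and work uniformly over any real closed field.
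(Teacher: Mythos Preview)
Your proposal is correct and follows essentially the same route as the paper: reduce to a finite-dimensional $G$-submodule, use Carath\'eodory to see that the convex hull of the orbit is \sa ally compact, take the unique point of minimal norm with respect to a $G$-invariant inner product, and identify it with $\rey(f)$ via the defining properties of the Reynolds operator. Your write-up is in fact somewhat more detailed than the paper's (for instance in spelling out why the convex hull is \sa ally compact and why $\rey_V$ restricts to $\rey_W$), but the underlying argument is the same.
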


Since $\rey_V$ is the identity on $R[V]^G$, the inclusion $C\cap R[V]
^G\subset\rey_V(C)$ is trivial, and we only have to show $\rey_V(C)
\subset C$. The proof is obvious when $G$ is finite, since then $\rey
_V(f)$ is a convex combination of the finitely many $G$-translates
of $f$ (\ref{reyfinite}). In the general case, invariant integration
may replace this argument when $R=\R$, but not for other $R$.

Instead we use the following uniform argument. Let $f\in C$. There is
a finite-dimensional $G$-invariant subspace $U$ of $R[V]$ containing
$f$. So the theorem follows from the following

\begin{prop}\Label{reyconvex}
Assume that $G(R)$ is \sa ally compact, and let $U$ be a
finite-dimensional $G$-module. Let $\rey_U\colon U\to U^G$ be the
Reynolds operator of the $G$-module $U$. Then $\rey_U(u)$ lies in the
convex hull of the orbit $G(R)\,u$, for every $u\in U$.
\end{prop}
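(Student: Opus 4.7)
The plan is to combine a minimum-norm argument with the $R$-linearity of $\rey_U$ and its behavior under the $G$-action. The first ingredient I would use is the existence of a $G(R)$-invariant positive definite inner product $\langle\cdot,\cdot\rangle$ on $U$, the same tool already invoked in~\ref{ineqorb}. Over $R=\R$ it comes from averaging any positive form with the Haar measure on the compact group $G(\R)$; for arbitrary real closed $R$ it is justified by the Tarski transfer principle, since existence of such a form is a first-order statement in the defining data of $G\subset\GL(U)$.

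Next I would form $K:=\mathrm{conv}(G(R)u)$ and verify that it is \sa ally compact, convex, and $G(R)$-invariant. The orbit $G(R)u$ is \sa ally compact, being the image of the \sa ally compact set $G(R)$ under the continuous \sa\ map $g\mapsto gu$; by Carath\'eodory every element of $K$ is a convex combination of at most $\dim(U)+1$ orbit points, which realizes $K$ as a continuous image of a \sa ally compact set and hence itself \sa ally compact. Convexity and $G(R)$-invariance of $K$ are immediate.

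The key step is to produce a $G$-invariant point of $K$. The function $v\mapsto\langle v,v\rangle$ is continuous and \sa\ on the \sa ally compact set $K$, hence attains a minimum at some $v_0\in K$; strict convexity (via the parallelogram identity) forces the minimizer to be unique. For every $g\in G(R)$ the point $gv_0$ again lies in $K$ and satisfies $\langle gv_0,gv_0\rangle=\langle v_0,v_0\rangle$ by invariance of the inner product, so uniqueness gives $gv_0=v_0$. Thus $v_0\in U^G$.

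Finally I would identify $v_0$ with $\rey_U(u)$. Writing $v_0=\sum_i\lambda_i(g_iu)$ from membership in $K$ and applying the $R$-linear map $\rey_U$ yields
\[
v_0=\rey_U(v_0)=\sum_i\lambda_i\,\rey_U(g_iu)=\sum_i\lambda_i\,\rey_U(u)=\rey_U(u),
\]
where the first equality uses $v_0\in U^G$ and the third uses $\rey_U(gw)=\rey_U(w)$, the module analogue of property~(2) of~\ref{rey2props} (valid because $\rey_U$ is the projection onto $U^G$ along a $G$-invariant complement, hence $G$-equivariant). Thus $\rey_U(u)=v_0\in K$, as desired. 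The main obstacle is producing the invariant inner product over an arbitrary real closed field as a substitute for the unavailable Haar measure; once that is in hand, the remainder is standard \sa\ geometry together with the defining properties of $\rey_U$.
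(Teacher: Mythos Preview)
Your proposal is correct and follows essentially the same approach as the paper's proof: both use a $G$-invariant positive definite inner product, Carath\'eodory to get \sa\ compactness of the convex hull, the unique norm-minimizer $v_0$ in that hull, its $G$-invariance by uniqueness, and the identification $v_0=\rey_U(u)$ via the convex-combination expression. You have simply spelled out in detail steps that the paper leaves as ``clearly'' or ``it is clear,'' including the final computation $\rey_U(v_0)=\rey_U(u)$.
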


In particular, if $C$ is a $G$-invariant convex subset of $U$, then
$\rey_U(C)=C\cap U^G$.

\begin{proof}
There exists a $G$-invariant positive definite inner product
on $U$. By Cara\-th\'eo\-dory's lemma, the convex hull $C$ of the
orbit $G(R)\,u$ is \sa ally compact.
Hence there exists a unique point $v$ in $C$ of minimal distance to
the origin. Clearly, $v$ must be $G$-invariant.
Being a convex combination of finitely many translates $gu$, $g\in
G(R)$, it is clear that in fact $v=\rey(u)$.
\end{proof}

\begin{cor}\Label{reyinvmod}
Let $G(R)$ be \sa ally compact, let $V$ be an affine $G$-variety,
and let $W=V\qu G$. Let $\rey=\rey_V\colon R[V]\to R[W]$.
\begin{itemize}
\item[(a)]
$\rey\bigl(\Sigma R[V]^2\bigr)=R[W]\cap\Sigma R[V]^2=\bigl(\Sigma
R[V]^2\bigr)^G$. This is a preordering in $R[W]$, and we denote it by
$S_0$.
\item[(b)]
If $M$ is any quadratic module in $R[V]$, then $\rey(M)$ is an
$S_0$-module in $R[W]$.
\item[(c)]
If $M$ is a $G$-invariant quadratic module in $R[V]$, then $\rey(M)=
M\cap R[W]$.
\end{itemize}
\end{cor}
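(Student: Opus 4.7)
The plan is to derive all three parts from Theorem~\ref{invconvex} applied to suitable $G$-invariant convex subsets of $R[V]$, combined with the $R[V]^G$-linearity of $\rey$ noted in~\ref{rey2props}. The set $\Sigma R[V]^2$ will play the central role in (a), and the same theorem applied directly to $M$ will handle (c); the module axiom in (b) will come from $R[V]^G$-linearity.

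For~(a), first observe that $\Sigma R[V]^2$ is $G$-invariant: the $G(R)$-action on $R[V]$ is by $R$-algebra automorphisms, so it permutes squares. It is also convex. Theorem~\ref{invconvex} then gives
\[
\rey\bigl(\Sigma R[V]^2\bigr)=\Sigma R[V]^2\cap R[V]^G=\bigl(\Sigma R[V]^2\bigr)^G,
\]
and since $R[V]^G=R[W]$ this also equals $R[W]\cap\Sigma R[V]^2$. To verify that $S_0$ is a preordering in $R[W]$, I check the axioms inside $R[W]$: it contains $1$, is closed under addition trivially, and closure under multiplication (and under multiplication by squares from $R[W]$) holds since a product of two sums of squares in $R[V]$ is again a sum of squares in $R[V]$, and $G$-invariance is preserved under products.

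For~(b), the claim is that $\rey(M)$ satisfies the $S_0$-module axioms in $R[W]$. Containment of $1$ follows from $1\in M$ and $\rey(1)=1$; additivity follows from $R$-linearity of $\rey$. The key point is $S_0\cdot\rey(M)\subset\rey(M)$: given $s\in S_0$, write $s=\sum_i a_i^2$ with $a_i\in R[V]$ (this is the only place where the concrete form of $S_0$ as a set of honest sums of squares in $R[V]$ is used, as opposed to its mere membership in $R[W]$). For $m\in M$ we then have $sm=\sum_i a_i^2\,m\in M$ because $M$ is a quadratic module in $R[V]$. Applying $\rey$ and using its $R[V]^G$-linearity,
\[
\rey(sm)=s\cdot\rey(m),
\]
which lies in $\rey(M)$, as required.

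For~(c), I apply Theorem~\ref{invconvex} directly to $M$ itself: quadratic modules are convex, and by hypothesis $M$ is $G$-invariant, so
\[
\rey(M)=M\cap R[V]^G=M\cap R[W].
\]

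There is no real obstacle here; everything is assembled from tools already in place. The only spot requiring care is the step in~(b) where $S_0$ is re-interpreted through its defining representation as sums of squares in $R[V]$ in order to invoke the quadratic-module axiom of $M$ in the big ring; working only with $S_0\subset R[W]$ abstractly would not suffice.
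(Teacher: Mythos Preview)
Your proof is correct and follows essentially the same route as the paper: apply Theorem~\ref{invconvex} to the $G$-invariant convex sets $\Sigma R[V]^2$ and $M$ for (a) and (c), and for (b) use $S_0\subset\Sigma R[V]^2$ to get $sM\subset M$, then $R[V]^G$-linearity of $\rey$ to pull $s$ outside. The paper's proof is terser but identical in content.
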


\begin{proof}
For any $G$-invariant quadratic module $M$ in $R[V]$ we have $\rey(M)
=M\cap R[W]$ by Theorem \ref{invconvex}. This proves (a) and (c).
To prove (b), let $M$ be an arbitrary quadratic module in $R[V]$.
Clearly $\rey(M)$ is additively closed. For every $f\in S_0$ we have
$fM\subset M$ since $f\in\Sigma R[V]^2$. Hence $f\cdot\rey(M)=\rey
(fM)\subset\rey(M)$.
\end{proof}

\begin{cor}
Let $G(R)$ be \sa ally compact, and let $V$ be an affine $G$-variety.
The elements of $S_0=\bigl(\Sigma R[V]^2\bigr)^G$ are precisely the
finite sums of elements of the form $\rey(a^2)$ with $a\in R[V]$
semi-invariant (c.~f.\ \ref{dfnsemiinv}).
\end{cor}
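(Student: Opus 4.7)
The plan is to prove both inclusions, and the argument is short given the tools already in place.

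For the easy inclusion, I would take $a\in R[V]$ semi-invariant and observe that $a^2\in\Sigma R[V]^2$, so $\rey(a^2)\in\rey\bigl(\Sigma R[V]^2\bigr)=S_0$ by Corollary \ref{reyinvmod}(a). Since $S_0$ is additively closed (being a preordering by \ref{reyinvmod}(a)), every finite sum of such elements lies in $S_0$.

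For the non-trivial inclusion, I would take $f\in S_0$, i.e.\ a $G$-invariant sum of squares, and apply Theorem \ref{gatparr}: there exist semi-invariants $f_1,\dots,f_m\in R[V]$ with
$$f=f_1^2+\cdots+f_m^2.$$
Now use that $\rey_V$ is $R$-linear and acts as the identity on the $G$-invariant element $f$:
$$f=\rey_V(f)=\rey_V\Bigl(\sum_{i=1}^mf_i^2\Bigr)=\sum_{i=1}^m\rey_V(f_i^2).$$
This expresses $f$ as a finite sum of elements of the required form.

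There is essentially no obstacle here: the corollary is a direct packaging of Theorem \ref{gatparr} together with the linearity of $\rey_V$ and the fact (\ref{rey2props}(1)) that $\rey_V$ fixes invariants. The only conceptual point worth flagging in the write-up is why $S_0$ is closed under addition, which is immediate from Corollary \ref{reyinvmod}(a) identifying $S_0$ with the preordering $\bigl(\Sigma R[V]^2\bigr)^G$.
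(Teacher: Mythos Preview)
Your proof is correct and follows essentially the same approach as the paper: apply Theorem \ref{gatparr} to write a $G$-invariant sum of squares as a sum of squares of semi-invariants, then use $\rey(f)=f$ and linearity of $\rey$. The paper only spells out the non-trivial inclusion, leaving the easy one implicit, but otherwise the arguments coincide.
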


\begin{proof}
Let $f\in S_0$, so $f$ is a $G$-invariant sum of squares in $R[V]$.
By Theorem \ref{gatparr} we can write $f=f_1^2+\cdots+f_r^2$ with
each $f_i$ semi-invariant. Therefore
$$f=\rey(f)=\rey(f_1^2)+\cdots+\rey(f_r^2).$$
\end{proof}

\begin{Rems}\Label{rhomex}
In Section~5, we will give examples showing the following:
\begin{enumerate}
\item
The preordering $S_0$ in $R[W]$ need not be finitely generated
(Example \ref{s0notfg}).
\item
If $T$ is a finitely generated preordering in $R[V]$, then the
$S_0$-module $\rey(T)$ need not be finitely generated, not even if
$T$ is $G$-invariant (Example \ref{reymnotfgs0mod}).
\item
If $T$ is a preordering in $R[V]$, then the quadratic module $\rey
(T)$ in $R[W]$ need not be a preordering (Example \ref{reynotpo}).
\end{enumerate}
All these examples have the simplest possible group acting, namely
the group $G$ of order two.
\end{Rems}

\begin{cor}\Label{basic}
Let $G(R)$ be \sa ally compact, and let $K$ be a $G$-invariant subset
of $V(R)$. If $f\in R[V]$ satisfies $f(x)\ge0$ for every $x\in K$,
then $(\rey f)(\pi x)\ge0$ for every $x\in K$. In other words,
$$\scrP_V(K)^G=\rey\bigl(\scrP_V(K)\bigr)=\scrP_W(\pi K).$$
\end{cor}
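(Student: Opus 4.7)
The plan is to deduce the displayed two equalities first, from which the opening assertion falls out immediately.

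First I observe that the set $C := \scrP_V(K) = \{f\in R[V]\colon f\ge0\text{ on }K\}$ is convex (in fact a preordering) in $R[V]$, and that it is $G$-invariant as a consequence of $K$ being $G$-invariant: for $f\in C$ and $g\in G(R)$, the value $f^g(x)=f(gx)$ is $\ge0$ for all $x\in K$ because $gx\in K$. I can therefore apply Theorem \ref{invconvex} to $C$, which gives
\[
\rey\bigl(\scrP_V(K)\bigr)\;=\;\scrP_V(K)\cap R[V]^G\;=\;\scrP_V(K)^G.
\]
This settles the first of the two equalities.

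For the second equality I use the identification $R[V]^G=R[W]$ corresponding to the quotient morphism $\pi\colon V\to W$. For any $h\in R[V]^G$ and $x\in V(R)$ we have $h(x)=h(\pi x)$ when $h$ is viewed on the right-hand side as an element of $R[W]$. Therefore $h\in\scrP_V(K)^G$ iff $h\ge0$ on $K$ as a function on $V(R)$, iff $h\ge0$ on the image $\pi K\subset W(R)$, iff $h\in\scrP_W(\pi K)$. This yields $\scrP_V(K)^G=\scrP_W(\pi K)$.

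The opening sentence is just the inclusion $\rey\bigl(\scrP_V(K)\bigr)\subset\scrP_W(\pi K)$, which is exactly the content of the two equalities applied to $f\in\scrP_V(K)$. No real obstacle arises; the only point worth flagging is the verification that $\scrP_V(K)$ really is $G$-invariant (so that Theorem \ref{invconvex} applies), and this uses nothing beyond the hypothesis that $K$ itself is $G$-invariant.
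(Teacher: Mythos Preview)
Your proof is correct and follows essentially the same approach as the paper: the paper also observes that $\scrP_V(K)$ is $G$-invariant and convex, that $\scrP_V(K)\cap R[W]=\scrP_W(\pi K)$, and then invokes Theorem~\ref{invconvex}. Your write-up just makes the verification of the identity $\scrP_V(K)^G=\scrP_W(\pi K)$ slightly more explicit.
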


\begin{proof}
$\scrP_V(K)$ is a $G$-invariant convex subset of $R[V]$, and $\scrP_V
(K)\cap R[W]=\scrP_W(\pi K)$. Therefore, the corollary is a special
case of Theorem \ref{invconvex}. Alternatively, it follows from
\ref{lem1po}(b) applied to $M=\scrP_V(K)$, using \ref{reyinvmod}(c).
\end{proof}

We now recall Br\"ocker's result that $\pi\colon V(R)\to W(R)$ maps
$G$-invariant basic closed sets to basic closed sets. The proof in
\cite{Br} uses integration, and so it works only for $R=\R$. Using
\ref{basic}, however, there is no difficulty in giving a proof which
works in general:

\begin{prop}\Label{imbc}
Assume that $G(R)$ is \sa ally compact. Let $f_1,\dots,f_r\in R[V]$,
and let $K:=\scrS_V(f_1,\dots,f_r)$ and $T:=PO_V(f_1,\dots,f_r)$.
If $K$ is $G$-invariant then the \sa\ set $\pi(K)$ in $W(R)$ is basic
closed. In fact,
$$\wt{\pi(K)}=\scrX_W(\rey(T))=\bigcap_{a\in R[V]}\bigcap_
{i\in\{0,1\}^r}\scrX_W\bigl(\rey(a^2f_1^{i_1}\cdots f_r^{i_r})
\bigr),$$
and equality holds for a finite sub-intersection of the right hand
intersection.
\end{prop}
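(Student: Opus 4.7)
The plan is to prove the two equalities separately and then derive the finite-subintersection claim by compactness. Write $f^I := f_1^{i_1}\cdots f_r^{i_r}$ for $I=(i_1,\dots,i_r)\in\{0,1\}^r$. The second equality is formal: every element of $T=PO_V(f_1,\dots,f_r)$ is a finite sum of terms $a^2 f^I$ with $a\in R[V]$, so $\rey(T)$ is additively generated by the elements $\rey(a^2 f^I)$, and a prime cone of $R[W]$ lies in $\scrX_W(\rey(T))$ iff it is $\ge 0$ on each of these.

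The inclusion $\wt{\pi(K)}\subset\scrX_W(\rey(T))$ is immediate from Corollary \ref{basic}: since $K=\scrS_V(T)$ gives $T\subset\scrP_V(K)$, applying $\rey$ yields $\rey(T)\subset\rey(\scrP_V(K))=\scrP_W(\pi K)$, so every element of $\rey(T)$ is non-negative on $\pi K$ and hence on $\wt{\pi K}$.

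For the reverse inclusion, given $\alpha\in\scrX_W(\rey(T))$ I will produce $\beta\in\wt K=\scrX_V(T)$ with $\wt\pi(\beta)=\alpha$; then $\alpha\in\wt\pi(\wt K)=\wt{\pi K}$. To force the exact equality $\wt\pi(\beta)=\alpha$, and not merely a specialization, I work in the localization $B:=R[V][S_\alpha^{-1}]$ with $S_\alpha:=\{g\in R[W]:g(\alpha)>0\}$, and consider the preordering $Q\subset B$ generated by $T$ and by $\{g\in R[W]:g(\alpha)\ge 0\}$. The key step is to show $-1\notin Q$. Assuming the contrary, one may clear denominators to obtain an identity in $R[V]$ of the form
\[
-\tilde s^2 \;=\; \sum_I h_I\,\sigma_I\,f^I
\]
with $\tilde s\in S_\alpha$, $h_I\in R[W]$ satisfying $h_I(\alpha)\ge 0$, and $\sigma_I\in\Sigma R[V]^2$. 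Applying the Reynolds operator $\rey$, which is $R[W]$-linear, gives
\[
-\tilde s^2 \;=\; \sum_I h_I\cdot\rey(\sigma_I f^I),
\]
and evaluating at $\alpha$ yields $-\tilde s(\alpha)^2=\sum_I h_I(\alpha)\cdot\rey(\sigma_I f^I)(\alpha)$ in the ordered residue field. The left-hand side is strictly negative since $\tilde s(\alpha)>0$, while each summand on the right is $\ge 0$ because $\sigma_I f^I\in T$ forces $\rey(\sigma_I f^I)(\alpha)\ge 0$ by hypothesis on $\alpha$---contradiction. Hence $Q$ is proper and extends by Zorn to a prime cone $\gamma$ of $B$, pulling back to $\beta\in\Sper R[V]$. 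By construction, $\alpha\subset\beta\cap R[W]$; conversely, any $g\in R[W]$ with $g(\alpha)<0$ has $-g\in S_\alpha$ inverted in $B$ and hence not in the support of $\beta$, so $g\notin\beta$. This gives $\beta\cap R[W]=\alpha$ exactly, and therefore $\wt\pi(\beta)=\alpha$ as required.

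The finite-subintersection statement then follows by compactness. The equality just established shows $\scrX_W(\rey(T))=\wt{\pi K}$, which is constructible as the tilde of a semi-algebraic set; thus the pro-basic closed set $\scrX_W(\rey(T))$ is in fact constructible, and its complement is open constructible, covered by the basic opens $\{h<0\}$ as $h$ ranges over the generators $\rey(a^2 f^I)$. Compactness of $\Sper R[W]$ in the constructible topology extracts a finite subcover, and the corresponding finite sub-intersection equals $\wt{\pi K}$. The main obstacle is engineering the lift so that $\wt\pi(\beta)=\alpha$ exactly rather than as a mere specialization; this is what forces the localization $B$ and the careful bookkeeping of strict versus non-strict $\alpha$-positivity.
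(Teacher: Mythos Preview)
Your proof is correct and follows essentially the same approach as the paper's: both establish the forward inclusion via Corollary \ref{basic}, obtain the reverse inclusion by producing (or ruling out) a point of $\scrX_V(T)$ over a given $\alpha\in\Sper R[W]$ using the preordering generated by $T$ and $P_\alpha$, apply $\rey$ with its $R[W]$-linearity to the resulting identity, and finish with constructible compactness. The only cosmetic difference is that the paper argues by contrapositive and cites the abstract Positivstellensatz as a black box, whereas you unpack that step explicitly via the localization $B=R[V][S_\alpha^{-1}]$ and Zorn's lemma to build the lift $\beta$ directly---the underlying content is the same.
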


\begin{proof}
(C.f.\ \cite{Br} Prop.~1.2, Prop.~5.1)
$T\subset\scrP_V(K)$ implies $\rey(T)\subset\rey(\scrP_V(K))=\scrP_W
(\pi K)$ (\ref{basic}), and thus $\pi(K)\subset\scrX_W\bigl(\rey(T)
\bigr)$.
% This uses compactness of G(R)
Conversely let $\beta\in\wt{W(R)}$, $\beta\notin\wt{\pi(K)}$. Thus
$\wt\pi^{-1}(\beta)\cap\scrX_V(T)=\emptyset$. By an application of
the general Stellensatz (\cite{BCR} p.~91, \cite{KS} p.~143), this
means that the preordering of $R[V]$ generated by $P_\beta:=\{b\in
R[W]\colon b(\beta)\ge0\}$ and by $T$ contains an element $-b$ with
$b\in R[W]$, $b(\beta)>0$. So there is an identity
$$-b=\sum_{j=1}^mb_jt_j$$
in $R[V]$, where $b\in R[W]$ satisfies $b(\beta)>0$, and where $b_j
\in P_\beta$ and $t_j\in T$. Applying $\rey$ gives $-b=\sum_jb_j\rey
(t_j)$, which shows that $\rey(t_j)(\beta)<0$ for some $j$. Since
$t_j$ is a sum of finitely many products $a^2f_1^{i_1}\cdots f_r^
{i_r}$ with $i_\nu\in\{0,1\}$ and $a\in R[V]$, we see that $\beta$ is
not contained in the right hand intersection.

A simple compactness argument implies that $\wt{\pi(K)}$ is equal to
a finite subintersection of the double intersection. Namely, the sets
in the double intersection are clearly constructible, while
$\wt{\pi(K)}$ is constructible because $\pi(K)$ is \sa\ by the
Tarski-Seidenberg Theorem. The claim now follows from the fact that
the constructible sets are closed and compact in the constructible
topology.
\end{proof}

\begin{cor}\Label{prop1}
Assume that $G(R)$ is \sa ally compact, and let $S_0=\rey(\Sigma
R[V]^2)$ as above. Then $\scrX_W(S_0)=\wt Z$.
\end{cor}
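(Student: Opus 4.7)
The plan is to derive this as the special case $K=V(R)$ of Proposition \ref{imbc}. With the empty list of inequalities one has $\scrS_V()=V(R)$, which is trivially $G$-invariant, and the preordering $PO_V()$ generated by no elements coincides with $\Sigma R[V]^2$. Since $\pi(V(R))=Z$ by definition, Proposition \ref{imbc} directly yields
$$\wt Z \;=\; \wt{\pi(V(R))} \;=\; \scrX_W\bigl(\rey(\Sigma R[V]^2)\bigr) \;=\; \scrX_W(S_0),$$
which is the claim.

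For a self-contained proof one may verify the two inclusions independently. The easy direction $\wt Z\subset\scrX_W(S_0)$ uses only that $S_0=\Sigma R[V]^2\cap R[W]$ by Corollary \ref{reyinvmod}(a): each $f\in S_0$ is a sum of squares in $R[V]$, hence non-negative on $V(R)$; viewed as an element of $R[W]$, it is therefore non-negative on $Z=\pi(V(R))$, and the tilde correspondence upgrades this to non-negativity on $\wt Z$. The reverse inclusion $\scrX_W(S_0)\subset\wt Z$ is where the substance lies and is handled exactly as in the proof of \ref{imbc}: given $\beta\notin\wt Z$ one has $\wt\pi^{-1}(\beta)=\emptyset$, so the general Positivstellensatz produces an identity $-b=\sum_j b_j s_j$ in $R[V]$ with $b,b_j\in R[W]$, $b(\beta)>0$, $b_j(\beta)\ge 0$, and $s_j\in\Sigma R[V]^2$. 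Applying the Reynolds operator and using its $R[W]$-linearity from \ref{rey2props}, one obtains $-b=\sum_j b_j\,\rey(s_j)$, so $\rey(s_j)(\beta)<0$ for some $j$; since $\rey(s_j)\in S_0$, this witnesses $\beta\notin\scrX_W(S_0)$.

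I anticipate essentially no obstacle here: the corollary is a direct specialization of \ref{imbc}, the only bookkeeping being the identification $PO_V()=\Sigma R[V]^2$ and $\scrS_V()=V(R)$. The published proof is likely a single line invoking \ref{imbc}.
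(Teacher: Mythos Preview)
Your proposal is correct and matches the paper's approach exactly: the paper's proof is indeed the one-line observation that this is the case $T=\Sigma R[V]^2$ of Proposition~\ref{imbc} (with a reference to \ref{reyinvmod} for $S_0=\rey(\Sigma R[V]^2)$), just as you anticipated. The paper additionally remarks that a stronger explicit description follows from \ref{ineqorb} and \ref{ginvsos}, but this is a side comment rather than part of the argument.
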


\begin{proof}
This is the particular case $T=\Sigma R[V]^2$ of Proposition
\ref{imbc}, see \ref{reyinvmod}. A stronger (explicit) result can be
obtained from \ref{ineqorb} and \ref{ginvsos}.
\end{proof}

The next result generalizes this corollary (\ref{prop1} corresponds
to the case $N=\Sigma R[W]^2$ and $M=\Sigma R[V]^2$):

\begin{prop}\Label{lem2po}
Let $G(R)$ be \sa ally compact. Let $N$ be a quadratic module in
$R[W]$, and let $M$ be the quadratic module which is generated by
$N$ in $R[V]$.
\begin{itemize}
\item[(a)]
$\rey(M)=M\cap R[W]$, and this is the $S_0$-module generated by $N$
in $R[W]$.
\item[(b)]
$\wt\pi(\scrX_V(M))=\scrX_W(N)\cap\wt Z=\scrX_W(M\cap R[W])$.
% So we have equality in Lemma \ref{lem1po}(a).
\item[(c)]
In particular, if $N$ is finitely generated, then $\scrX_W(M\cap
R[W])$ is constructible in $\wt{W(R)}$.
\end{itemize}
\end{prop}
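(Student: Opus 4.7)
The plan is to reduce all three parts to the preceding lemmas and corollaries, exploiting the fact that the generators of $M$ lie in $R[W]$. Concretely, I would proceed as follows.

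First, notice that $M$ is automatically $G$-invariant: its generators lie in $R[W]=R[V]^G$, and the cone $\Sigma R[V]^2$ is stable under the $G$-action on $R[V]$. This unlocks Corollary \ref{reyinvmod}(c), giving $\rey(M)=M\cap R[W]$. For the second assertion of (a), I would take a typical element $\sum_i s_in_i\in M$ (with $s_i\in\Sigma R[V]^2$ and $n_i\in N$) and use the $R[V]^G$-linearity of $\rey$ recorded in \ref{rey2props} to write $\rey\bigl(\sum_i s_in_i\bigr)=\sum_i n_i\,\rey(s_i)$. By Corollary \ref{reyinvmod}(a), $\rey(s_i)\in S_0$, so this lies in the $S_0$-module generated by $N$ in $R[W]$. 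The reverse inclusion is trivial because $S_0\subset\Sigma R[V]^2$.

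For (b), the key observation is that $M$ is generated as a quadratic module in $R[V]$ by elements of $R[W]$; by the remark following Lemma \ref{lem1po} this ensures that the equivalent conditions of part (c) of that lemma hold, i.e.\ $\scrX_V(M)=\wt\pi^{-1}\bigl(\scrX_W(M\cap R[W])\bigr)$ and, applying $\wt\pi$, $\wt\pi\bigl(\scrX_V(M)\bigr)=\wt Z\cap\scrX_W(M\cap R[W])$. Now part (a) identifies $M\cap R[W]$ with the $S_0$-module generated by $N$, whence $\scrX_W(M\cap R[W])=\scrX_W(N)\cap\scrX_W(S_0)$, and Corollary \ref{prop1} supplies $\scrX_W(S_0)=\wt Z$. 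Consequently $\scrX_W(M\cap R[W])=\scrX_W(N)\cap\wt Z\subset\wt Z$, and the double equality in (b) drops out at once.

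Finally, (c) is a bookkeeping step: when $N$ is finitely generated the set $\scrX_W(N)$ is constructible (indeed basic closed), and $\wt Z$ is constructible because $Z=\pi(V(R))$ is semialgebraic (in fact basic closed by Theorem \ref{procesischw}(b)); by (b), $\scrX_W(M\cap R[W])$ equals their intersection and is therefore constructible. There is no serious obstacle in this proposition—the analytic content has been absorbed into Corollary \ref{reyinvmod}, Corollary \ref{prop1}, and Lemma \ref{lem1po}—but the step that most deserves care is the verification that Lemma \ref{lem1po}(c) applies, namely the inclusion $M\subset\Sat_V(M\cap R[W])$; this is immediate here since the generators of $M$ already sit in $M\cap R[W]$, and it is this feature that prevents $M\cap R[W]$ from being merely $N$ (it must be enlarged to the $S_0$-module generated by $N$, as (a) makes precise).
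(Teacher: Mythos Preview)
Your proof is correct and follows essentially the same approach as the paper: both invoke Corollary~\ref{reyinvmod} for~(a), then combine~(a) with Corollary~\ref{prop1} for~(b), deducing~(c) immediately. The only cosmetic difference is that for~(b) the paper observes directly that $\scrX_V(M)=\wt\pi^{-1}(\scrX_W(N))$ (since $M$ is generated by $N$) rather than routing through Lemma~\ref{lem1po}(c) with $M\cap R[W]$, but the content is the same.
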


\begin{proof}
(a)
Clearly, $M$ is $G$-invariant. Hence $\rey(M)=M\cap R[W]$, and this
is an $S_0$-module (\ref{reyinvmod}). Every
element $f\in M$ can be written $f=\sum_ia_i^2g_i$ with $g_i\in N$
and $a_i\in R[V]$. Hence
$$\rey(f)=\sum_i\rey(a_i^2)\,g_i,$$
which shows that $\rey(M)$ is contained in the $S_0$-module generated
by $N$.

For the proof of (b) note that $\scrX_V(M)=\wt\pi^{-1}(\scrX_W(N))$,
from which we get the first equality in (b). By (a), $\scrX_W(M\cap
R[W])=\scrX_W(N)\cap\scrX_W(S_0)$, and combined with \ref{prop1} this
gives the second equality. (c) is obvious from (b).
\end{proof}

\begin{cor}
A quadratic module $N$ in $R[W]$ is of the form $N=M\cap R[W]$ for
some quadratic module $M$ in $R[V]$, if and only if $N$ is an
$S_0$-module.
\qed
\end{cor}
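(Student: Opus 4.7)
The plan is to handle the two implications separately, and to exploit Proposition \ref{lem2po}(a), which essentially already contains the hard direction.

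For the ``only if'' direction, I would start with a quadratic module $M$ in $R[V]$ and set $N := M \cap R[W]$. To verify that $N$ is an $S_0$-module I just need to check closure under multiplication by $S_0$. Let $s \in S_0 \subset \Sigma R[V]^2$ and $n \in N$. Writing $s = \sum_i a_i^2$ with $a_i \in R[V]$, the quadratic module axioms for $M$ give $sn = \sum_i a_i^2 n \in M$. Since $s, n \in R[W]$ we also have $sn \in R[W]$, so $sn \in M \cap R[W] = N$. Additive closure and $1 \in N$ are immediate.

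For the ``if'' direction, assume $N$ is an $S_0$-module in $R[W]$, and let $M$ be the quadratic module generated by $N$ in $R[V]$. Proposition \ref{lem2po}(a) says that $M \cap R[W] = \rey(M)$ equals the $S_0$-module generated by $N$ in $R[W]$. Since $N$ is itself an $S_0$-module, the $S_0$-module generated by $N$ is just $N$. Hence $M \cap R[W] = N$, as required.

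Since Proposition \ref{lem2po}(a) carries the real content (it uses \sa\ compactness of $G(R)$ through the Reynolds operator being well-behaved on quadratic modules, via Corollary \ref{reyinvmod}), the only potential pitfall is the verification that $s \cdot n \in N$ in the forward direction, which requires recalling that elements of $S_0$ are genuine sums of squares in $R[V]$ (not merely elements of some abstract preordering of $R[W]$); this is built into the definition $S_0 = (\Sigma R[V]^2)^G$. No further obstacle is expected.
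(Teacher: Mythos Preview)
Your proof is correct. The ``if'' direction matches the paper's argument exactly: both let $M$ be the quadratic module generated by $N$ in $R[V]$ and invoke Proposition~\ref{lem2po}(a).

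For the ``only if'' direction you take a slightly different route. The paper argues indirectly: given $N=M'\cap R[W]$ for some quadratic module $M'$, it observes that then also $N=M\cap R[W]$ where $M$ is generated by $N$ in $R[V]$ (since $N\subset M\subset M'$ forces $N\subset M\cap R[W]\subset M'\cap R[W]=N$), and then appeals once more to Proposition~\ref{lem2po}(a) to conclude that $N$ equals the $S_0$-module it generates, hence is an $S_0$-module. Your argument is a direct verification that $S_0\cdot N\subset N$ using only that $S_0\subset\Sigma R[V]^2$ and the quadratic-module axioms for $M$. This is more elementary: it avoids the Reynolds operator entirely in this direction, and in particular does not rely on semi-algebraic compactness of $G(R)$ for the forward implication. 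The paper's approach has the mild aesthetic advantage of deriving both directions from the single Proposition~\ref{lem2po}(a).
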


\begin{proof}
If $N=M'\cap R[W]$ with some quadratic module $M'$ in $R[V]$, then
also $N=M\cap R[W]$ with $M$ the quadratic module generated by $N$ in
$R[V]$, and so $N$ is an $S_0$-module by \ref{lem2po}(a). Conversely,
if $N$ is an $S_0$-module, let $M$ be the quadratic module generated
by $N$ in $R[V]$; then $N=M\cap R[W]$, again by \ref{lem2po}(a).
\end{proof}

\begin{rem}\Label{exinvdescrp}
Proposition \ref{imbc} implies in particular that every $G$-invariant
basic closed set $K\subset V(R)$ has a description $K=\scrS_V(h_1,
\dots,h_m)$ by $G$-invariant functions $h_1,\dots,h_m\in R[V]$. More
precisely, if $K=\scrS_V(f_1,\dots,f_r)$ with arbitrary $f_i\in
R[V]$, then the $h_j$ can be chosen of the form $\rey(a^2f_1^{i_1}
\cdots f_r^{i_r})$ with $a\in R[V]$ and $i_\nu\in\{0,1\}$.

On the other hand, it is not true in general that $K=\scrS_V
(\rey f_1,\dots,\rey f_r)$. (Only ``$\subset$'' holds in general, by
\ref{basic}). For example, consider $G=\mu_2$ acting on the line $V=
\A^1$ through multiplication by $-1$, and take $K=[-1,1]=\scrS_V
(f_1,f_2)$, where $f_1=1+x$, $f_2=1-x$. Then $\rey f_i=1$ ($i=1,2$),
so $\scrS_V(\rey f_1,\rey f_2)=R\ne K$.

Therefore the question arises how $G$-invariant functions $h_1,\dots,
h_r$ with $K=\scrS_V(h_1,\dots,h_r)$ can be found concretely. Here we
give an answer in the case where $G$ is finite:
\end{rem}

\begin{prop}\Label{thm1}
Assume $|G|=n$ is finite, and let $f_1,\dots,f_r\in R[V]$ be such
that the set $K=\scrS_V(f_1,\dots,f_r)$ is $G$-invariant. For $i=1,
\dots,r$ and $j=1,\dots,n$ let $s_{ij}$ be the $j$-th elementary
symmetric function in the $n$ elements $f_i^g$ ($g\in G$). Then
$s_{ij}\in R[V]^G$, and $K$ is the subset of $V(R)$ where the $nr$
functions $s_{ij}$ are non-negative.
\end{prop}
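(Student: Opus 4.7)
The plan is to reduce everything to the following elementary fact: given real numbers $y_1,\dots,y_n\in R$, we have $y_k\ge 0$ for all $k$ if and only if all elementary symmetric functions $e_j(y_1,\dots,y_n)$ ($j=1,\dots,n$) are $\ge 0$. This is immediate in one direction, and the converse follows because the monic polynomial $p(t)=\prod_k(t-y_k)=t^n+\sum_{j=1}^n(-1)^je_j(y)\,t^{n-j}$ satisfies $(-1)^np(-s)=s^n+\sum_j e_j(y)\,s^{n-j}>0$ for every $s>0$ as soon as all $e_j(y)\ge 0$, so $p$ has no negative root.

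Granting this, I would proceed as follows. First, note that for every $i$ the multiset $\{f_i^g:g\in G\}$ is permuted by the right $G$-action (since right multiplication by $h\in G$ is a bijection of $G$), so its elementary symmetric functions $s_{ij}$ lie in $R[V]^G$, as required. Second, use $G$-invariance of $K$: a point $x\in V(R)$ lies in $K$ if and only if $gx\in K$ for every $g\in G$, which in turn is equivalent to $f_i^g(x)=f_i(gx)\ge0$ for all $g\in G$ and all $i=1,\dots,r$ (the ``if'' direction uses $g=e$). Third, apply the elementary lemma to the $n$-tuple $(f_i^g(x))_{g\in G}$ for each fixed $i$: this tuple consists entirely of nonnegative numbers if and only if $s_{ij}(x)=e_j\bigl((f_i^g(x))_{g\in G}\bigr)\ge 0$ for every $j=1,\dots,n$. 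Combining these three steps, $x\in K$ is equivalent to $s_{ij}(x)\ge 0$ for all $i,j$, which is the claim.

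There is no real obstacle here: the argument is essentially a translation of the standard fact that the nonnegativity of $n$ real numbers is encoded by the signs of the coefficients of their characteristic polynomial, combined with the trivial observation that $G$-invariance of $K$ lets us replace the single inequality $f_i(x)\ge 0$ by the $G$-orbit of inequalities $f_i^g(x)\ge 0$. The only small subtlety worth emphasizing is the direction ``all $s_{ij}(x)\ge 0$ implies $f_i(x)\ge 0$'', which is not obvious pointwise but falls out of the coefficient-sign argument for $p(t)$ above.
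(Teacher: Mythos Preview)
Your proposal is correct and follows essentially the same approach as the paper: both reduce to the elementary lemma that $y_1,\dots,y_n\ge 0$ iff all $e_j(y)\ge 0$, proved via the sign of the polynomial $\prod_k(t-y_k)$ at negative arguments, and then use $G$-invariance of $K$ to rewrite $K=\bigcap_i\bigcap_{g\in G}\{f_i^g\ge 0\}$. Your write-up is slightly more explicit (you spell out why $s_{ij}\in R[V]^G$ and why $K=\bigcap_{i,g}\{f_i^g\ge 0\}$), but the argument is the same.
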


\begin{proof}
Given real numbers $x_1,\dots,x_n$, let $s_i=s_i(x_1,\dots,x_n)$ be
the $i$-th elementary symmetric function of the $x_j$. Then
$$x_1,\dots,x_n\ge0\quad\iff\quad s_1,\dots,s_n\ge0.$$
Indeed, the right hand condition implies $\prod_j(x-x_j)=x^n-s_1
x^{n-1}+\cdots+(-1)^ns_n\ne0$ for any $x<0$, since all summands have
the same sign. Hence for every index $i=1,\dots,r$,
$$\bigcap_{g\in G}\{f_i^g\ge0\}=\bigcap_{j=1}^n\{s_{ij} \ge 0\},$$
and so $K=\bigcap_i\bigcap_{g\in G}\{f_i^g\ge0\}
=\bigcap_i\bigcap_j\{s_{ij}\ge0\}$.
\end{proof}

\begin{rem}
For any basic closed and $G$-invariant subset $K$ of $V(R)$,
Proposition \ref{thm1}, combined with explicit inequalities for
$\pi(V(R))$ (\ref{ineqorb}), gives a constructive way for obtaining
inequalities describing $\pi(K)$ in the orbit variety.
\end{rem}

We do not know whether a similar constructive procedure exists for
the case $G(R)$ \sa ally compact, but infinite.

%-------------------------------------------------------------------%

\section{Reynolds operator and sums of squares in the non-compact
  case}

Let $G$ be a reductive group over $R$ and $V$ an affine $G$-variety,
with Reynolds operator $\rey\colon R[V]\to R[V]^G$. If $G(R)$ is
\sa ally compact, we have seen that $\rey(f^2)$ is a sum of squares
in $R[V]$, for every $f\in R[V]$, and in turn, that
$$\rey\bigl(\Sigma R[V]^2\bigr)=\bigl(\Sigma R[V]^2\bigr)^G$$
(see \ref{reyinvmod}). In this section we will prove that this key
property fails (for suitable $V$) whenever $G(R)$ is not \sa ally
compact. First, we need two lemmas:

\begin{lem}\Label{reysos}
Let $G$ be a reductive group over $R$, let $\rey_G\colon R[G]\to R$
be the Reynolds operator. The following conditions are equivalent:
\begin{itemize}
\item[(i)]
$\rey_G(f^2)\ge0$ for every $f\in R[G]$;
\item[(ii)]
for every affine $G$-variety $V$ one has $\rey_V(\Sigma R[V]^2)
\subset\Sigma R[V]^2$.
\end{itemize}
For brevity, we will say that $G$ has property $(\star)$ if (i) and
(ii) hold.
\end{lem}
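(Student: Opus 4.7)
The plan is to prove both implications by translating the statement into linear algebra on the finite-dimensional $G$-module generated by a given element of $R[V]$, exploiting Lemma \ref{calcrey} which reduces $\rey_V$ to $\rey_G$.

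For the easy direction (ii) $\Rightarrow$ (i), I would take $V=G$ with $G$ acting on itself by translation. Since $G(R)$ is Zariski dense in $G$, we have $R[G]^G=R$, so $\rey_V=\rey_G$. Under assumption (ii), for $f\in R[G]$ the element $\rey_G(f^2)$ lies in $\Sigma R[G]^2$, but it is also a constant in $R$; evaluating at any point $g\in G(R)$ shows that a constant which is a sum of squares of polynomials must be $\ge 0$ in $R$.

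For the main direction (i) $\Rightarrow$ (ii), since $\rey_V$ is $R$-linear it suffices to show $\rey_V(g^2)\in\Sigma R[V]^2$ for every $g\in R[V]$. Write $\eta_V(g)=\sum_{i=1}^m a_i\otimes f_i$ with $a_i\in R[G]$ and $f_i\in R[V]$. Since $\eta_V$ is a ring homomorphism,
\[
\eta_V(g^2)=\sum_{i,j} a_ia_j\otimes f_if_j,
\]
and Lemma \ref{calcrey} gives
\[
\rey_V(g^2)=\sum_{i,j}\rey_G(a_ia_j)\,f_if_j.
\]
Consider the symmetric matrix $M=\bigl(\rey_G(a_ia_j)\bigr)_{i,j}\in R^{m\times m}$. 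For any vector $\xi=(\xi_1,\dots,\xi_m)\in R^m$, linearity of $\rey_G$ yields
\[
\xi^T M\xi=\rey_G\Bigl(\bigl(\textstyle\sum_i \xi_i a_i\bigr)^{\!2}\Bigr)\ge 0
\]
by hypothesis (i). Hence $M$ is positive semidefinite over the real closed field $R$.

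The final step is to exploit that any symmetric psd matrix over a real closed field admits a factorization $M=LL^T$ (diagonalize $M$ orthogonally with non-negative eigenvalues and take square roots). Writing $L=(v_k^{(i)})$ with column vectors $v_k\in R^m$, we obtain $M_{ij}=\sum_k v_k^{(i)}v_k^{(j)}$, whence
\[
\rey_V(g^2)=\sum_k\Bigl(\sum_i v_k^{(i)}f_i\Bigr)^{\!2}\in\Sigma R[V]^2.
\]
The main conceptual obstacle is recognizing that the cross-term matrix $M$ arising from $\rey_G$ applied to the product $a_ia_j$ is exactly what condition (i) controls; once this is seen, the psd factorization and the conversion to a sum of squares in $R[V]$ are standard linear algebra over a real closed field.
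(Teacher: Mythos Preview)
Your proof is correct and follows essentially the same route as the paper: for (i)$\Rightarrow$(ii) you expand $\eta_V(g^2)$, apply Lemma~\ref{calcrey} to obtain $\rey_V(g^2)=\sum_{i,j}\rey_G(a_ia_j)f_if_j$, observe that the matrix $\bigl(\rey_G(a_ia_j)\bigr)$ is psd by hypothesis~(i), and then use a psd factorization (the paper phrases this as ``diagonalizing the matrix'') to exhibit a sum-of-squares decomposition. The direction (ii)$\Rightarrow$(i) is dismissed in the paper as ``a particular case of~(ii)''; your slightly more explicit justification via $V=G$ and evaluation of a constant sum of squares is fine.
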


\begin{proof}
Of course, (i) is a particular case of (ii). Assume that (i) holds,
and let $f\in R[V]$, with (say) $\eta_V(f)=\sum_{i=1}^ma_i\otimes
f_i$ in $R[G]\otimes R[V]$. Hence
$$\eta_V(f^2)=\sum_{i,j=1}^m(a_ia_j)\otimes(f_if_j),$$
and therefore
$$\rey_V(f^2)=\sum_{i,j=1}^m\rey_G(a_ia_j)\cdot f_if_j$$
(Lemma \ref{calcrey}). The symmetric matrix $S:=(\rey_G(a_ia_j))_
{i,j=1,\dots,m}$ over $R$ is positive semidefinite, since for $c_1,
\dots,c_m\in R$ we have
$$\sum_{i,j=1}^mc_ic_j\,\rey_G(a_ia_j)=\rey_G\Bigl(\sum_{i,j=1}^m
c_ic_j\,a_ia_j\Bigr)=\rey_G\left(\Bigl(\sum_{i=1}^mc_i\,a_i\Bigr)^2
\right)\>\ge0$$
by hypothesis (i). After diagonalizing the matrix $S$, one sees
therefore that $\rey_V(f^2)$ is a sum of squares in $R[V]$.
\end{proof}

\begin{rem}
In \ref{reyinvmod} it was proved that property $(\star)$ holds
whenever $G(R)$ is \sa ally compact.
For $R=\R$, the usual real
numbers, there is an even easier transcendental proof, which uses
characterization (i) from \ref{reysos}. Indeed,
$$\rey_G(f^2)=\int_{G(\R)}f(g)^2\>dg\>\ge\>0$$
(see \ref{reyhaar}) is immediate for $f\in\R[G]$.
\end{rem}

\begin{example}\Label{reygm}
The multiplicative group $G=\G_m$ does not have property $(\star)$.
Indeed, let $R[\G_m]=R[x,x^{-1}]$ be its coordinate ring. The
Reynolds operator $\rey\colon R[\G_m]\to R$ is given by
$$\rey\colon\;\sum_{i\in\Z}a_i\,x^i\>\mapsto\>a_0$$
($a_i\in R$). In particular, $\rey\bigl((x-x^{-1})^2\bigr)=-2$,
and so property $(\star)$ fails.
\end{example}

\begin{lem}\Label{reystrpos}
Let $G$ be reductive, let $V$ be a homogeneous affine $G$-variety,
and consider the Reynolds operator $\rey=\rey_V\colon R[V]\to R$. If
$\rey(f^2)\ge0$ for every $f\in R[V]$, then actually $\rey(f^2)>0$
for every $f\ne0$.
\end{lem}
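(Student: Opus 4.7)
My plan is to turn the hypothesis into a statement about a symmetric bilinear form and exploit Cauchy--Schwarz together with the transitivity of the $G$-action. Consider the symmetric $R$-bilinear form
\[
B\colon R[V]\times R[V]\to R,\qquad B(f,h):=\rey(fh),
\]
which is well-defined since $R[V]^G=R$ (as $V$ is homogeneous). The hypothesis says exactly that $B$ is positive semidefinite. So by Cauchy--Schwarz for psd symmetric bilinear forms, the set
\[
N:=\bigl\{f\in R[V]\colon \rey(f^2)=0\bigr\}
\]
coincides with the radical $\{f\colon B(f,h)=0\text{ for all }h\in R[V]\}$, and is in particular an $R$-linear subspace of $R[V]$. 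The goal will be to prove $N=0$, which is what the proposition asserts.

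Next I would verify that $N$ is a $G$-invariant ideal of $R[V]$. For the ideal property: if $f\in N$ and $a\in R[V]$, then $\rey((af)^2)=\rey(a^2f^2)=B(f,a^2f)=0$ since $f$ lies in the radical of $B$. For $G$-invariance: using $R[V]^G$-linearity of $\rey$ and $\rey(h^g)=\rey(h)$ (property (2) of \ref{rey2props}), for any $g\in G(R)$ and $h\in R[V]$ we have $B(f^g,h)=\rey(f^g h)=\rey\bigl((f\cdot h^{g^{-1}})^g\bigr)=\rey(f\cdot h^{g^{-1}})=B(f,h^{g^{-1}})=0$, so $f^g\in N$. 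Since $G(R)$ is Zariski dense in $G$, $N$ is actually $G$-invariant as an ideal.

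Now I invoke the homogeneity of $V$: the closed subscheme $Z(N)\subset V$ cut out by $N$ is $G$-invariant, so by transitivity of the $G$-action on $V$ it is either empty or all of $V$. Since $R[V]$ is reduced, this forces $N=R[V]$ or $N=0$. The first possibility is excluded because $\rey(1)=1\neq 0$, so $1\notin N$. Hence $N=0$, which is exactly the claim $\rey(f^2)>0$ for every $f\neq 0$.

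I expect the only delicate point to be the homogeneity step, specifically the assertion that a $G$-invariant ideal of $R[V]$ must be $0$ or $R[V]$ when $V$ is a homogeneous $G$-variety over a possibly non-algebraically closed real closed field $R$. This follows from passing to $\ol R$: since $V(\ol R)$ is a single $G(\ol R)$-orbit and $V$ is reduced, any $G$-invariant closed subscheme is either empty or all of $V$, and this descends to $R$. Everything else is a routine verification using only the characterizing properties of $\rey$ from \ref{rey2props}.
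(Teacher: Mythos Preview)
Your proof is correct and follows essentially the same route as the paper's: both define the set $I=\{f:\rey(f^2)=0\}$, show it is a $G$-invariant ideal, and then use homogeneity (over $\ol R$) together with $1\notin I$ to conclude $I=0$. The only cosmetic difference is that you package the key step as Cauchy--Schwarz for the psd form $B(f,h)=\rey(fh)$, whereas the paper unpacks this directly via the inequality $0\le\rey((b+tf)^2)=\rey(b^2)+2t\,\rey(bf)$ for all $t\in R$; these are the same argument, and the paper's $G$-invariance check $\rey((f^g)^2)=\rey((f^2)^g)=\rey(f^2)=0$ is a shade more direct than yours.
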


Recall that a $G$-variety $V\ne\emptyset$ is called
\emph{homogeneous} if  the group $G\bigl(R(\sqrt{-1})\bigr)$ acts
transitively on the set $V\bigl(R(\sqrt{-1})\bigr)$.

\begin{proof}
We first prove that the set
$$I:=\bigl\{f\in R[V]\colon\rey(f^2)=0\bigr\}$$
% is contained in $\ker(\rey)$ and
is an ideal in $R[V]$. Indeed, for $f\in I$ and $b\in R[V]$ we have
$$0\le\rey((b+tf)^2)=\rey(b^2)+2t\cdot\rey(bf)$$
for every $t\in R$, which implies $\rey(bf)=0$.
% The case $a=1$ shows $I\subset\ker(\rey)$.
Hence $aI\subset I$ for every $a\in R[V]$ (take $b:=a^2f$). Moreover,
if $f$, $g\in I$ then $\rey((f\pm g)^2)=\pm2\,\rey(fg)\ge0$, hence
$\rey((f\pm g)^2)=0$, and so we have shown that $I$ is an ideal.

If $f\in I$ and $g\in G(R)$ then also $f^g\in I$, since $\rey
\bigl((f^g)^2\bigr)=\rey\bigl((f^2)^g\bigr)=\rey(f^2)=0$ by the
$G$-invariance of $\rey$. Therefore the closed subvariety of $V$
defined by $\sqrt I$ is invariant under translation by $G(R)$, and
hence must be equal to $V$ since $1\notin I$. Thus $I=(0)$.
\end{proof}

Now we can show:

\begin{thm}\Label{noncptnonstar}
Let $G$ be a reductive group over $R$. If $G$ has property $(\star)$
then $G(R)$ is \sa ally compact.
\end{thm}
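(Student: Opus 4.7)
The plan is to prove the contrapositive: assuming $G(R)$ is not \sa ally compact, I will exhibit a nonzero $f\in R[G]$ with $\rey_G(f^2)=0$, which contradicts property $(\star)$. Indeed, by criterion (i) of Lemma \ref{reysos}, property $(\star)$ means $\rey_G(h^2)\ge 0$ for every $h\in R[G]$, and applying Lemma \ref{reystrpos} to the homogeneous affine $G$-variety $V=G$ (for which $R[V]^G=R$ and $\rey_V=\rey_G$) upgrades this to $\rey_G(h^2)>0$ for every nonzero $h\in R[G]$. So any nonzero $f$ with $\rey_G(f^2)=0$ yields the desired contradiction.

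Such an $f$ will be a matrix coefficient carrying a non-trivial weight under a one-parameter subgroup. Because $G$ is reductive and $G(R)$ is not \sa ally compact, $G$ fails to be $R$-anisotropic, so it contains a closed $R$-subgroup $H$ isomorphic to $\G_m$; fix an $R$-isomorphism $\chi\colon\G_m\isoto H\subset G$. Now choose any faithful finite-dimensional representation $\rho\colon G\to\GL(V)$ defined over $R$. Since $\chi$ is non-trivial and $\rho$ is faithful, $\rho\comp\chi$ is non-trivial, so the weight decomposition $V=\bigoplus_{n\in\Z}V_n$ under $\rho\comp\chi$ (which splits over $R$ because $\G_m$ is $R$-split) contains a non-zero $V_{n_0}$ for some $n_0\ne 0$. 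Pick a non-zero $v\in V_{n_0}$ and $\xi\in V^\du$ with $\xi(v)\ne 0$, and set $f(g):=\xi(\rho(g)v)\in R[G]$. Then $f(1)=\xi(v)\ne 0$, so $f\ne 0$.

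For every $t\in\G_m(R)$ one has $f(g\chi(t))=\xi(\rho(g)\rho(\chi(t))v)=t^{n_0}f(g)$, whence $f^2(g\chi(t))=t^{2n_0}f^2(g)$. Right translation by $\chi(t)$ on $R[G]$ commutes with the left $G$-action and is therefore an endomorphism of $R[G]$ as a left $G$-module; consequently it commutes with $\rey_G$, the projection onto the isotypic component $R[G]^G=R$. Since constants are fixed by any translation, this forces $\rey_G(f^2)=\rey_G(t^{2n_0}f^2)=t^{2n_0}\rey_G(f^2)$ for every $t\in\G_m(R)$. Choosing $t$ with $t^{2n_0}\ne 1$ (possible since $n_0\ne 0$, e.g.\ $t=2$) forces $\rey_G(f^2)=0$, completing the contradiction.

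The principal technical input is the structural fact that a reductive $R$-group whose $R$-points fail to be \sa ally compact contains an $R$-split $\G_m$-subgroup; over $\R$ this is classical, and over a general real closed field it follows either by Tarski transfer from the $\R$-case or from the algebraic theory of $R$-anisotropic reductive groups. Note that no use of absolute irreducibility or of Schur's lemma is required, since the $R$-split torus $H$ acts on every $R$-representation of $G$ through a weight decomposition already defined over $R$.
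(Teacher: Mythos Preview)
Your argument is correct and takes a genuinely different route from the paper's.  Both proofs use the same structural input---a split $R$-subgroup $H\cong\G_m$ inside $G$ once $G(R)$ fails to be \sa ally compact---and both ultimately rely on Lemma \ref{reystrpos}.  The paper, however, applies that lemma to the homogeneous $G$-variety $H\backslash G$: it factors $\rey_G=\rey_{H\backslash G}\comp\rey_H$, picks left $H$-weight vectors $x,y\in R[G]$ of opposite weights, sets $b=x^2-y^2$, and computes $\rey_H(b^2)=-2(xy)^2$, so that $\rey_G(b^2)=-\rey_{H\backslash G}\bigl((\sqrt2\,xy)^2\bigr)<0$.  You instead apply Lemma \ref{reystrpos} directly to $V=G$, so that property $(\star)$ forces $\rey_G(h^2)>0$ for every nonzero $h$, and then produce a single nonzero \emph{right} $H$-weight vector $f$ (realized as a matrix coefficient of a faithful representation); the commutation of $\rey_G$ with right translation then forces $\rey_G(f^2)=0$.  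Your approach avoids forming the quotient $H\backslash G$ and the factorization of Reynolds operators altogether, making it shorter; the paper's approach has the minor conceptual bonus of exhibiting an element whose Reynolds image is explicitly a negative square (not merely zero), tying in visibly with Example \ref{reygm}.
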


The converse has already been proved in \ref{reyinvmod}.

\begin{proof}
We assume that $G(R)$ is non-compact and shall arrive at a
contradiction. Since $G(R)$ is not compact, $G$ contains a split
torus, i.e.\ a closed $R$-subgroup $H$ isomorphic to $\G_m$. Consider
the natural left action $(h,g)\mapsto hg$ of $H$ on $G$ by
translation,
and let $H\backslash G$ be the quotient variety. It is known
(\cite{KH} Thm.\ 5.1)
that $H\backslash G$ is an affine variety, with $R[H\backslash G]=
R[G]^H$, the ring of $H$-invariants in $R[G]$ with respect to this
action. Now $H\backslash G$ is a homogeneous $G$-variety for the
(left) action
$$G\times(H\backslash G)\to H\backslash G,\quad(g,Hx)\mapsto
Hxg^{-1},$$
and the Reynolds operator factors as
$$\xymatrix{
R[G] \ar[rr]^{\rey_H} \ar[dr]_{\rey_G} &&
  R[H\backslash G] \ar[dl]^{\rey_{H\backslash G}} \\
& R}$$
We will find an element $b\in R[G]$ such that $\rey_H(b^2)=-c^2$ for
some $c\in R[H\backslash G]$, $c\ne0$. This will be a contradiction.
For, on the one hand, $\rey_G(b^2)\ge0$ by hypothesis $(\star)$. On
the other, $\rey_G(b^2)=-\rey_{H\backslash G}(c^2)$ must be strictly
negative by Lemma \ref{reystrpos}.

We denote the dual action of $H$ on $G$ by
$$\eta'\colon R[G]\to R[H]\otimes R[G].$$
Let $X(H)$ be the character group of $H$ (an infinite cyclic group),
and let $u\in X(H)$ be a non-trivial character.
There exists an
element $x\in R[G]$, $x\ne0$, with
$$\eta'(x)=u\otimes x.$$
Indeed, if we decompose the $H$-module $R[G]$ into isotypical
components,
$$R[G]=\bigoplus_{\chi\in X(H)}R[G]_{(\chi)},$$
then $R[G]_{(\chi)}\ne0$ for every $\chi$,
and it suffices to take any $0\ne x\in R[G]_{(u)}$.
Similarly, choose $0\ne y\in R[G]$ with $\eta'(y)=u^{-1}\otimes y$.
Let $b:=x^2-y^2\in R[G]$. We have
$$\eta'(b^2)=u^4\otimes x^4-2\cdot1\otimes(x^2y^2)+u^{-4}\otimes
y^4.$$
Since the Reynolds operator $R[H]\to R$ of $H$ sends $u^n$ to zero
for all $n\ne0$ (Example \ref{reygm}), we get for $\rey_H\colon R[G]
\to R[H\backslash G]$, according to Lemma \ref{calcrey},
$$\rey_H(b^2)=-2\,(xy)^2=-c^2,$$
where $c:=\sqrt2\,xy$ lies in $R[H\backslash G]$, $c\ne0$. By the
argument given before, this completes the proof of Theorem
\ref{noncptnonstar}.
\end{proof}

%-------------------------------------------------------------------%

\section{Three examples}

To motivate our first example, we recall a result of Procesi and
Schwarz for rational functions (\cite{PS} Sect.~7, slightly
generalized here):

\begin{prop}\Label{genbasic}
Let the reductive $R$-group $G$ act on the smooth irreducible affine
$R$-variety $V$. Let $\pi\colon V\to V\qu G=:W$ be the quotient
morphism, and let $Z:=\pi(V(R))\subset W(R)$. Equivalent conditions:
\begin{itemize}
\item[(i)]
The set $Z$ is generically basic;
\item[(ii)]
the preordering $T:=R(W)\cap\Sigma R(V)^2$ of the field $R(W)$ is
finitely generated.
\end{itemize}
In fact, if $Z$ is generically equal to $\scrS_W(p_1,\dots,p_m)$ with
$p_i\in R[W]$, then $T$ is generated by $p_1,\dots,p_m$ (as a
preordering of $R(W)$). Moreover, conditions (i) and (ii) are
satisfied when $G(R)$ is \sa ally compact.
\end{prop}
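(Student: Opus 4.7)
The plan is to translate both (i) and (ii) into statements about subsets of $\Sper R(W)$ and match them. Set $K := R(W)$ and $L := R(V)$; since $V$ is irreducible and $\pi$ dominant, $K \subset L$. By Artin's theorem, $T = K \cap \Sigma L^2$ is exactly the set of $f \in K$ nonnegative at every ordering of $K$ which extends to some ordering of $L$. Write $\Omega \subset \Sper K$ for this set of extendable orderings, so $T = \scrP_K(\Omega)$ is automatically saturated.

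The geometric heart of the argument is the identification $\Omega = \wt Z \cap \Sper K$, where $\Sper K$ is viewed as the set of points of $\Sper R[W]$ of support $(0)$. Note first that $\wt\pi(\Sper R[V]) = \wt Z$: by Tarski--Seidenberg (or Chevalley for real spectra), the image $\wt\pi(\Sper R[V])$ is constructible, and its real trace is the set of $w \in W(R)$ for which the fiber $V_w$ has an $R$-point---by Artin--Lang, that set is $\pi(V(R)) = Z$, so the two constructible sets agree. The inclusion $\Omega \subset \wt Z \cap \Sper K$ is then immediate, since an extension $\beta \in \Sper L$ of $\alpha$ has support $(0)$ in $R[V]$ and maps to $\alpha \in \wt Z$. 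For the reverse inclusion, lift $\alpha \in \wt Z \cap \Sper K$ to $\beta \in \Sper R[V]$; its support $\p$ satisfies $\p \cap R[W] = (0)$. Smoothness of $V$ enters here: $R[V]_\p$ is a regular local ring with completion $\kappa(\p)\lpow t_1,\dots,t_k\rpow$, and the ordering of $\kappa(\p)$ induced by $\beta$ extends to this completion, and thence to $L$, by declaring each $t_i$ positive and infinitesimal.

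Granted the identification, the proof closes via an elementary real-algebra fact: every finitely generated preordering $T' = PO_K(a_1,\dots,a_m)$ in a field $K$ is automatically saturated. Indeed, the Krivine--Stengle Positivstellensatz produces, for $f$ nonnegative on $\scrX_K(T')$, an identity $f \cdot s = f^{2n} + t$ with $s, t \in T'$; one excludes $s = 0$ (else $-1 \in T'$, contradicting that $T'$ is proper) and recovers $f \in T'$ via $s^{-1} = s \cdot (s^{-1})^2 \in T'$. Combined with the Galois correspondence between saturated preorderings and pro-basic closed subsets of $\Sper K$, this gives that $T$ is finitely generated iff $\Omega$ is basic closed in $\Sper K$, which is exactly the condition that $Z$ is generically basic (two semialgebraic subsets of $W(R)$ coincide off a proper Zariski-closed subset iff they induce the same subset of $\Sper K$). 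Moreover, if $\Omega = \scrX_K(p_1,\dots,p_m)$ with $p_i \in R[W]$ witnessing the generic basicness of $Z$, the same saturation argument delivers the explicit equality $T = PO_K(p_1,\dots,p_m)$.

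The final clause is immediate: when $G(R)$ is \sa ally compact, Theorem~\ref{procesischw}(b) makes $Z$ basic closed, a fortiori generically basic. The main technical obstacle is the ``$\supset$'' inclusion in the identification of $\Omega$ with $\wt Z \cap \Sper K$, where smoothness of $V$ is genuinely needed to propagate an ordering through the formal completion of a regular local ring.
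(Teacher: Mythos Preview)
The paper does not supply its own proof of this proposition; it is stated as a recollection of a result of Procesi and Schwarz (\cite{PS}, Sect.~7, ``slightly generalized here''), and the text moves on without argument. So there is no in-paper proof to compare against.

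Your argument is correct and follows the natural route. The identification $\Omega=\wt Z\cap\Sper K$ is the crux, and your use of Cohen's structure theorem for the regular local ring $R[V]_\p$ to lift the ordering from $\kappa(\p)$ through the completion $\kappa(\p)\lpow t_1,\dots,t_k\rpow$ to $R(V)$ is the standard device---and the place where smoothness genuinely enters. One point you leave implicit but which deserves a sentence: the ordering on $L=R(V)$ so constructed does restrict to $\alpha$ on $K=R(W)$, because each nonzero $f\in R[W]$ is a unit in $R[V]_\p$ (as $\p\cap R[W]=(0)$), so its sign in the extended ordering is that of its residue in $\kappa(\p)$ under $\beta$, and $\wt\pi(\beta)=\alpha$ by the choice of $\beta$. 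The closing steps---that finitely generated preorderings in a field are saturated via the Positivstellensatz, and that two \sa\ subsets of $W(R)$ agree off a non--Zariski-dense set iff their tildes agree on $\Sper K$---are routine. A minor quibble: in the Positivstellensatz step you exclude $s=0$ by invoking properness of $T'$, but if $T'$ were improper then $T'=K$ and saturation is vacuous, so the case distinction is harmless either way.
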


% since this is essentially in \cite{PS}, let's not give a proof.

Recall that $Z$ generically basic means that there exists a basic
closed set $Z'$ in $W(R)$ such that the set-theoretic difference of
$Z$ and $Z'$ is not Zariski dense in $W$.

Note that $R(W)$ may be smaller than $R(V)^G$, the field of
$G$-invariant rational functions on $G$. Both coincide for all linear
representation spaces $V$ of $G$ defined over $R$, if and only if
every character $G\to\G_m$ defined over $R$ has finite image. In
particular, $R(W)=R(V)^G$ is always true for linear representations
$V$ if $G(R)$ is \sa ally compact. (These remarks are already made in
\cite{PS} 7.6.)

See \cite{PS} 7.8 for a class of representations $V$ where the
equivalent conditions of \ref{genbasic} fail.

\begin{lab}
Now consider the case where $G(R)$ is \sa ally compact. As we have
just recalled, the preordering $\bigl(\Sigma R(V)^2\bigr)^G$ of
$G$-invariant sums of squares of rational functions on $V$ is
finitely generated in the field $R(V)^G=R(W)$. One is therefore
wondering whether a similar result holds for regular functions. Thus,
is the preordering
$$S_0=\bigl(\Sigma R[V]^2\bigr)^G=\rey\bigl(\Sigma R[V]^2\bigr)$$
in $R[V]^G=R[W]$ finitely generated? It turns out that this usually
fails, as the following example shows.
\end{lab}

\begin{example}\Label{s0notfg}
Consider the group $G$ of order two acting on $V=\A^n$ by $x\mapsto
-x$. The ring of invariants $R[V]^G$ consists of all polynomials in
$R[V]=R[x_1,\dots,x_n]$ which contain only monomials of even degree.
Thus, $R[V]^G$ is generated as an $R$-algebra by $R[V]_2$, the space
of quadratic forms in $(x_1,\dots,x_n)$. Identifying $R[V]_2$ with
the space of symmetric $n\times n$-matrices over $R$, the cone $C:=
S_0\cap R[V]_2$ consists of all psd symmetric matrices. If $S_0$ were
finitely generated as a preordering in $R[V]^G$, then $C$ would be a
polyhedral cone in $R[V]_2$, i.e.\ we would have $C=R_\plus S_1+
\cdots+R_\plus S_m$ with finitely many psd matrices $S_\nu$. But this
is clearly not the case for $n\ge2$.

On the other hand, if the same $G$ acts on $\A^n$ instead by
$$(x_1,\dots,x_n)\mapsto(x_1,\dots,x_{n-1},-x_n),$$
then $R[V]^G$ is generated by $u_i=x_i$ ($i=1,\dots,n-1$) and $v=
x_n^2$, and $S_0$ is the preordering generated by $v$.
\end{example}

\begin{lab}
Let $M\subset R[V]$ be a quadratic module, generated by $f_1,\dots,
f_r$, say. From the example in Remark \ref{exinvdescrp} we know that
$\rey(M)$ can be larger than the $S_0$-module generated by $\rey
(f_1),\dots,\rey(f_r)$. However, the question remains if $\rey(M)$ is
at least finitely generated as an $S_0$-module. Our second example
shows that the answer is usually negative, even if $M$ is a
preordering and is $G$-invariant:
\end{lab}

\begin{example}\Label{reymnotfgs0mod}
Let $G=\mu_2$ act on $V=\A^2$ by interchanging the $x$ and $y$
coordinates, and let $T$ be the preordering in $R[V]=R[x,y]$ which
is generated by $x$ and $y$. Then $T$ is $G$-invariant, but the
$S_0$-module $\rey(T)$ fails to be finitely generated.

Assume to the contrary that $\rey(T)$ is finitely generated as an
$S_0$-module. Then $\rey(T)$ is generated as $S_0$-module by $1$,
$x+y$, $xy$ and by finitely many polynomials of the form $h_i=2\rey
(xg_i^2)$ with non-constant $g_i\in R[x,y]$ ($i=1,\dots,m$).
Consider the family of polynomials
$$f_r=2\rey\bigl((1-ry)^2x\bigr)=(x+y)-4rxy+r^2xy(x+y),$$
which lie in $\rey(T)$ for every value of the parameter $r\in R$. By
assumption, for every $r\in R$ there exist $s_0$, $s_1$, $s_2$, $t_1,
\dots,t_m\in S_0$ with
\begin{equation}\Label{ex2}
f_r=s_0+s_1\,(x+y)+s_2\,xy+\sum_{i=1}^mt_ih_i.
\end{equation}
A comparison of coefficients will lead to a contradiction. Let us
discuss identity \eqref{ex2} for fixed $r$. Since $\deg(p_1+p_2)=\max
\{\deg(p_1),\>\deg(p_2)\}$ for any two polynomials $p_1$, $p_2\in T$,
each summand in \eqref{ex2} has degree $\le3$. In particular, $\deg
(s_0)\le2$, $\deg(s_2)\le0$, and furthermore $\deg(g_i)=1$ and $t_i
\in R_\plus$ for each $i$. Writing $g_i=a_ix+b_iy+c_i$, the
coefficient of $x^3$ on the right is $d+\sum_it_ia_i^2$, where $d$
($\ge0$) is the coefficient of $x^2$ in $s_1$. Comparing with the
left hand side we conclude $d=0$ (hence $s_1\in R_\plus$) and $t_ia_i
=0$ for all $i$.

For any index $i$ with $t_i\ne0$, we have
$$h_i=b_i^2\,xy(x+y)+4b_ic_i\,xy+c_i^2\,(x+y).$$
Comparing coefficients of $x^2$ on both sides of \eqref{ex2}, we
conclude that $s_0$ must be a scalar as well, and thus even $s_0=0$.
Finally we compare coefficients of $x^2y$, $xy$ and $x$ in
\eqref{ex2}. This gives the identities
\begin{eqnarray*}
r^2 & = & \sum_it_ib_i^2, \\[-2pt]
-4r & = & s_2+4\sum_it_ib_ic_i, \\[-2pt]
1 & = & s_1+\sum_it_ic_i^2.
\end{eqnarray*}
Writing $u=\bigl(b_i\sqrt{t_i}\bigr)_i$ and $v=\bigl(c_i\sqrt{t_i}
\bigr)_i$ (two vectors in $R^m$) we conclude
\begin{equation}\Label{ineqs}
||u||^2=r^2,\quad\langle u,v\rangle\le-r,\quad||v||^2\le1.
\end{equation}
If $r>0$, the Cauchy-Schwarz inequality implies that both
inequalities in \eqref{ineqs} must be equalities. Therefore $u$ and
$v$ must be linearly dependent,
% and $s_1=s_2=0$.
and hence $u=\pm rv$. There is an index $i$ with $t_i\ne0$, and we
conclude $r=\bigl|\frac{b_i}{c_i}\bigr|$.

But this shows that there are only finitely many values $r>0$ for
which an identity \eqref{ex2} is possible with the fixed choice of
generators. Hence we have proved that $\rey(T)$ is not finitely
generated.
\end{example}

\begin{lab}
If $T$ is a $G$-invariant preordering in $R[V]$, then $\rey(T)$ is
again a preordering, by Corollary \ref{reyinvmod}(c). Is this even
true if we drop the assumption that $T$ is $G$-invariant? Our third
example shows that the answer is no.
\end{lab}

\begin{example}\Label{reynotpo}
Consider once more the group $G$ of order two, acting on $V=\A^2$ by
$(x,y)\mapsto(y,x)$. Let $T$ be the preordering in $R[V]=R[x,y]$
generated by $g=1+x$ and $h=y^2+x$. We'll show that $\rey(g)\,\rey(h)
\notin\rey(T)$, which implies that $\rey(T)$ is not a preordering.
Suppose to the contrary that
\begin{equation}\Label{ex3}
\rey(g)\,\rey(h)=\rey(t)
\end{equation}
for some $t\in T$. There exist sums of squares $p$, $q$, $r$, $s$ in
$R[x,y]$ with $t=p+qg+rh+sgh$. Again we have $\deg(f_1+f_2)=
\max\{\deg(f_1),\>\deg(f_2)\}$ for any $f_1$, $f_2$ in the
preordering generated by $T$ and $T^\tau$.
Hence $\deg(p)$, $\deg(q)\le2$ and $r$, $s\in R_\plus$. Evaluating
both sides of \eqref{ex3} at the origin shows $p(0,0)=q(0,0)=0$, and
so $p$, $q$ are homogeneous of degree two.

Consider the point $M:=(-1,0)$ and its conjugate $M'=(0,-1)$ in the
$(x,y)$-plane. To evaluate both sides of \eqref{ex3} at $M$, we
record $g(M)=0$, $g(M')=1$, $h(M)=-1$ and $h(M')=1$. Hence $\rey(h)$
vanishes at $M$, and we get
\begin{eqnarray*}
0\ =\ \rey(t)(M) & = & \rey(p)(M)+\rey(qg)(M)+\rey(rh)(M)+\rey(sgh)
  (M) \\
& = & \frac12\bigl(p(M)+p(M')\bigr)+\frac12q(M')+\frac12s.
\end{eqnarray*}
It follows that $p(M)=p(M')=q(M')=s=0$. Since $p$ is a psd quadratic
form, we have $p=0$. Similarly, we get $q=ax^2$ with $a\ge0$, which
gives $2\rey(qg)=a(x^2+y^2+x^3+y^3)$. Re-writing \eqref{ex3} gives
$$(2+x+y)(x^2+y^2+x+y)=2a(x^2+y^2+x^3+y^3)+2r(x+y+x^2+y^2),$$
and comparing the coefficients of $xy$ we see a contradiction.
\end{example}

%-------------------------------------------------------------------%

\section{Moment problems with symmetries}

In the second part of this paper we study moment problems on which a
group of symmetries acts. Therefore, our ground field will now always
be $R=\R$, the field of usual real numbers. Otherwise we'll keep the
situation considered so far. So we have the reductive group $G$ over
$\R$ which acts on the affine $\R$-variety $V$ via a morphism
$G\times V\to V$ of varieties. The quotient morphism is $\pi\colon V
\to V\qu G=W$, where $W$ is the affine variety with $\R[W]=\R[V]^G$
(and $\pi$ is induced by the inclusion $\R[V]^G\subset\R[V]$ of
rings). The image set of $\pi\colon V(\R)\to W(\R)$ is denoted $Z$.
By $\rey$ or $\rey_V$ we denote the Reynolds operator $\R[V]\to\R[W]$
(see \ref{defreynolds}).

\begin{lab}
Let $\R[V]^\du$ denote the dual vector space of $\R[V]$, i.~e.\ the
space of all linear functionals $L\colon\R[V]\to\R$. The right action
of $G(\R)$ on $\R[V]$ induces a left action on $\R[V]^\du$, namely
$(g,L)\mapsto\bigl({}^gL\colon f\mapsto L(f^g)\bigr)$. A linear
functional $L\in\R[V]^\du$ is called \emph{$G$-invariant} if $L$ is
invariant under this action of $G(\R)$, i.~e., if
$$L(f^g)=L(f)$$
holds for every $f\in\R[V]$ and $g\in G(\R)$.
\end{lab}

\begin{lem}\Label{ginvlinf}
A linear map $L\in\R[V]^\du$ is $G$-invariant if and only if $L(f)=
L(\rey f)$ for all $f\in\R[V]$. Hence the map
$$\rey^\du\colon\R[W]^\du\to\R[V]^\du,\quad F\mapsto F\comp\rey$$
is an isomorphism from $\R[W]^\du$ onto the space of $G$-invariant
linear forms on $\R[V]$.
\end{lem}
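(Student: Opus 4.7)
The plan is to extract both directions directly from the two characterizing properties of the Reynolds operator recorded in \ref{rey2props}.

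The ``if'' direction is immediate from property (2): if $L = L \comp \rey$, then for every $g \in G(\R)$ and $f \in \R[V]$,
$$L(f^g) = L(\rey(f^g)) = L(\rey f) = L(f),$$
so $L$ is $G$-invariant.

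For the converse, the heart of the matter is to show that a $G$-invariant $L$ annihilates the kernel $N := \ker(\rey_V)$. By the definition of $\rey_V$ (see \ref{defreynolds}), $N$ is the unique $G$-invariant complement of $\R[V]^G$ in $\R[V]$; equivalently, $N$ is the sum of all non-trivial isotypic components of $\R[V]$, and in particular $N^G = 0$. Given $n \in N$, I would embed $n$ in a finite-dimensional $G$-submodule $U \subset \R[V]$ (possible because $\R[V]$ is a union of such). Then $U \cap N$ is again a finite-dimensional $G$-module containing no trivial $G$-constituent. The restriction $L|_{U \cap N}$ is a $G$-invariant linear form, equivalently a $G$-equivariant map into the trivial representation $\R$, and must therefore vanish by Schur's lemma (which is applicable since $G$ is reductive and $\ch \R = 0$, so $U \cap N$ decomposes as a direct sum of non-trivial irreducible $G$-modules). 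Thus $L(n) = 0$, and writing $f = \rey(f) + (f - \rey f)$ with $f - \rey f \in N$ yields $L(f) = L(\rey f)$.

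For the isomorphism statement, $\rey^\du$ is tautologically $\R$-linear; its image lies in the $G$-invariant functionals by the ``if'' direction; it is injective because $\rey$ is surjective onto $\R[W]$ by property (1); and surjectivity onto the $G$-invariants comes from the converse direction by taking $F := L|_{\R[W]}$, which satisfies $\rey^\du(F) = L \comp \rey = L$. The sole non-routine step is the vanishing $L|_N = 0$, which rests on the structural fact that $N$ contains no trivial $G$-constituent; notably this argument avoids any appeal to compactness of $G(\R)$ or to invariant integration, and so applies to arbitrary reductive $G$.
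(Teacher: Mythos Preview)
Your proof is correct and follows essentially the same approach as the paper: both arguments rest on the observation that a $G$-invariant linear form is a $G$-module homomorphism into the trivial representation, and hence vanishes on every non-trivial isotypic component by Schur's lemma. The paper phrases this directly via the isotypic decomposition $\R[V]=\bigoplus_\omega\R[V]_{(\omega)}$ and the vanishing of $\Hom_G(M,\R)$ for non-trivial irreducible $M$, while you route through a finite-dimensional submodule before invoking Schur; this is a minor technical variation, not a genuinely different idea.
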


\begin{proof}
Consider the decomposition $\R[V]=\bigoplus_\omega\R[V]_{(\omega)}$
of the $G$-module $\R[V]$ into isotypical components, and let $\R[V]_
{(\omega_0)}:=\R[V]^G=\R[W]$ denote the submodule of $G$-invariants.
Any $G$-invariant linear form $L\colon\R[V]\to\R$ is a homomorphism
of $G$-modules, where $\R$ is given the trivial $G$-module structure.
But $\Hom_G(M,\R)=\{0\}$ for any irreducible $G$-module $M\ne\R$.
This shows that any $G$-invariant linear form $L$ vanishes on each
$\R[V]_{(\omega)}$, $\omega\ne\omega_0$. Since $\rey$ is the
projection of $\R[V]$ onto $\R[V]_{(\omega_0)}=\R[W]$, the lemma
follows from this.
\end{proof}

\begin{lab}
We are going to relate (Borel) measures on $W(\R)$ to $G$-invariant
(Borel) measures on $V(\R)$. We only have results when $G(\R)$ is
compact. Given a measure $\mu$ on $V(\R)$ for which every $f\in\R[V]$
is $\mu$-integrable, we write $L_\mu\colon f\mapsto\int_{V(\R)}f\>
d\mu$ for the linear functional ``integration by~$\mu$''.

A measure $\mu$ on $V(\R)$ will be called \emph{$G$-invariant} if
$\mu$ is invariant under translation by all elements of $G(\R)$,
i.~e., if $\mu(gA)=\mu(A)$ for every Borel set $A\subset V(\R)$ and
every $g\in G(\R)$.
Note that this implies
$$\int f\,d\mu=\int f^g\,d\mu$$
for every $f\in\R[V]$ and $g\in G(\R)$ (provided that either side
exists).
As usual, if $f\colon X\to Y$ is any continuous map of topological
spaces and $\mu$ is a Borel measure on $X$, then $f_*(\mu)$ denotes
the direct image measure on $Y$. It is characterized by $(f_*\mu)(B)=
\mu(f^{-1}(B))$ ($B\subset Y$ any Borel set).
\end{lab}

We first recall a few basic relations between invariant measures on a
$G$-space and measures on the orbit spaces. These facts must
certainly be folklore among the experts. Since we have not been able
to find suitable references, we decided to include the (easy) proofs.
For the rest of this section, assume that the group $G(\R)$ is
compact.

\begin{lem}\Label{charinvmeas}
Let $\sigma\colon G(\R)\times V(\R)\to V(\R)$ be the group action. A
Borel measure $\mu$ on $V(\R)$ is $G$-invariant if and only if
$$\mu=\sigma_*(\lambda\otimes\mu),$$
where $\lambda$ is the normalized Haar measure on $G(\R)$.
\end{lem}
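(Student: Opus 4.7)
The plan is to evaluate $\sigma_*(\lambda\otimes\mu)$ on an arbitrary Borel set $B\subset V(\R)$ by Fubini, giving
\[
\sigma_*(\lambda\otimes\mu)(B) \;=\; (\lambda\otimes\mu)\bigl(\sigma^{-1}(B)\bigr)
\;=\; \int_{G(\R)}\int_{V(\R)} \mathbf{1}_B(gx)\,d\mu(x)\,d\lambda(g)
\;=\; \int_{G(\R)}\mu(g^{-1}B)\,d\lambda(g),
\]
and similarly, interchanging the order,
\[
\sigma_*(\lambda\otimes\mu)(B) \;=\; \int_{V(\R)}\lambda\bigl(\{g:gx\in B\}\bigr)\,d\mu(x).
\]
Here $\sigma$ is continuous, hence Borel, so $\sigma^{-1}(B)$ is measurable; Fubini applies because $\lambda$ is a finite measure (as $G(\R)$ is compact) and the integrand is a bounded nonnegative Borel function.

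For the implication ($\Rightarrow$), assume $\mu$ is $G$-invariant. Then $\mu(g^{-1}B)=\mu(B)$ for every $g\in G(\R)$, and consequently
\[
\sigma_*(\lambda\otimes\mu)(B) \;=\; \int_{G(\R)}\mu(B)\,d\lambda(g) \;=\; \mu(B),
\]
since $\lambda(G(\R))=1$. This holds for every Borel set $B$, so $\mu=\sigma_*(\lambda\otimes\mu)$.

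For the converse ($\Leftarrow$), assume $\mu=\sigma_*(\lambda\otimes\mu)$, and fix $g_0\in G(\R)$ and a Borel set $B\subset V(\R)$. Applying the identity to $g_0^{-1}B$ and using the first Fubini-rewriting gives
\[
\mu(g_0^{-1}B) \;=\; \int_{G(\R)}\int_{V(\R)}\mathbf{1}_B(g_0 g x)\,d\mu(x)\,d\lambda(g).
\]
By the left-invariance of the Haar measure $\lambda$, the substitution $h=g_0 g$ shows that the right-hand side equals
\[
\int_{G(\R)}\int_{V(\R)}\mathbf{1}_B(hx)\,d\mu(x)\,d\lambda(h) \;=\; \sigma_*(\lambda\otimes\mu)(B) \;=\; \mu(B).
\]
Hence $\mu(g_0^{-1}B)=\mu(B)$ for every $g_0$ and every Borel $B$, i.~e., $\mu$ is $G$-invariant.

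The only minor obstacle is ensuring Fubini is applicable; this is settled by compactness of $G(\R)$, which makes $\lambda$ finite, together with the fact that we are integrating a bounded Borel indicator (so no integrability issues arise regardless of whether $\mu$ itself is finite, once we reduce to Borel sets of finite measure or work set-by-set). Everything else is a direct computation combining Fubini with left invariance and normalization of~$\lambda$.
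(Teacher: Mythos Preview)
Your proof is correct and follows the same basic idea as the paper's. The forward direction ($\mu$ invariant $\Rightarrow$ $\mu=\sigma_*(\lambda\otimes\mu)$) is identical: expand via Fubini and use $\mu(g^{-1}B)=\mu(B)$.

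For the converse you again go through Fubini and then invoke left-invariance of $\lambda$ via the substitution $h=g_0g$. The paper's argument here is slightly cleaner: it avoids Fubini entirely by writing the chain of pushforwards
\[
(l_{g_0})_*\mu=(l_{g_0}\circ\sigma)_*(\lambda\otimes\mu)=(\sigma\circ(l_{g_0}\times\id))_*(\lambda\otimes\mu)=\sigma_*\bigl((l_{g_0})_*\lambda\otimes\mu\bigr)=\sigma_*(\lambda\otimes\mu)=\mu,
\]
using only the identity $l_{g_0}\circ\sigma=\sigma\circ(l_{g_0}\times\id)$ and left-invariance of $\lambda$. This sidesteps your closing caveat about Fubini's applicability (which, while harmless, is a little loose as stated). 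In substance, though, both arguments are the same: left-invariance of the Haar measure is the single ingredient driving the converse.
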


\begin{proof}
For $g\in G(\R)$ let $l_g\colon x\mapsto gx$ denote left translation
by $g$ on either $V(\R)$ or $G(\R)$. If $\mu=\sigma_*(\lambda\otimes
\mu)$ then $\mu$ is $G$-invariant since
$$(l_g)_*(\mu)=(l_g\comp\sigma)_*(\lambda\otimes\mu)=(\sigma\comp
(l_g\times\id))_*(\lambda\otimes\mu)=\sigma_*(\lambda\otimes\mu)=
\mu.$$
Conversely, if $\mu$ is $G$-invariant, then for each Borel set $B$ in
$V(\R)$ we have
\begin{eqnarray*}
\sigma_*(\lambda\otimes\mu)(B) & = & (\lambda\otimes\mu)\bigl(
  \sigma^{-1}(B)\bigr) \\
& = & \int_{G(\R)}\mu(g^{-1}B)\>\lambda(dg)=\int_{G(\R)}\mu(B)\>
\lambda(dg)=\mu(B)
\end{eqnarray*}
by the Fubini formula.
\end{proof}

\begin{prop}\Label{measwginvmeasv}
Let $\nu$ be any Borel measure on $Z=\im(\pi)\subset W(\R)$.
\begin{itemize}
\item[(a)]
There exists a unique $G$-invariant Borel measure $\mu$ on $V(\R)$
with $\pi_*(\mu)=\nu$. We will denote it by $\mu=:\pi^*(\nu)$.
\item[(b)]
Explicitly, if $f\colon V(\R)\to\R$ is a non-negative measurable
function, then
$$\int_{V(\R)}f(x)\>\mu(dx)\>=\>\int_Z\ol h_f(y)\>\nu(dy),$$
where the function $\ol h_f\colon Z\to\R\cup\{\infty\}$ is defined by
$$\ol h_f(\pi x)\>:=\>\int_{G(\R)}f(gx)\>\lambda(dg)$$
($x\in V(\R)$).
\end{itemize}
\end{prop}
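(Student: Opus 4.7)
My plan is to prove uniqueness first by deriving formula (b) from Lemma \ref{charinvmeas}, and then to construct $\mu$ by turning (b) into a definition via the Riesz representation theorem. The key geometric input throughout is Theorem \ref{procesischw}(a), which tells us that the non-empty fibres of $\pi\colon V(\R)\to W(\R)$ are precisely the $G(\R)$-orbits.

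For uniqueness, suppose $\mu$ is any $G$-invariant Borel measure on $V(\R)$ with $\pi_*(\mu)=\nu$. Lemma \ref{charinvmeas} gives $\mu=\sigma_*(\lambda\otimes\mu)$, so for non-negative Borel $f$ Fubini yields
\[
\int_{V(\R)}f\,d\mu \>=\> \int_{V(\R)}\int_{G(\R)}f(gx)\,\lambda(dg)\,\mu(dx).
\]
The left-invariance of $\lambda$, combined with \ref{procesischw}(a), shows that the inner integral is constant on each fibre of $\pi$ over $Z$, so $\ol h_f$ is well-defined on $Z$. Pushing forward by $\pi$ then gives formula (b), which pins down $\mu$ in terms of $\nu$.

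For existence, I turn (b) into the definition of $\mu$. For $f\in C_c(V(\R))$, compactness of $G(\R)$ together with dominated convergence implies that the map $x\mapsto\int_{G(\R)}f(gx)\,\lambda(dg)$ is continuous and $G$-invariant on $V(\R)$, with support contained in $\pi^{-1}(\pi(\supp f))$. Since $Z$ is a topological quotient of $V(\R)$ by the $G(\R)$-action (by \ref{procesischw}(a) and (c)), this function descends to a continuous, compactly supported $\ol h_f\in C_c(Z)$. Then $\Lambda(f):=\int_Z\ol h_f\,d\nu$ is a positive linear functional on $C_c(V(\R))$, and the Riesz representation theorem yields a unique Radon measure $\mu$ on $V(\R)$ with $\int f\,d\mu=\Lambda(f)$ for all such $f$. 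The $G$-invariance of $\mu$ is then immediate from the identity $\ol h_{f\circ l_{g_0}}=\ol h_f$ (left-invariance of $\lambda$), and $\pi_*(\mu)=\nu$ follows by inserting $f=\chi_{\pi^{-1}(B)}$ for Borel $B\subset W(\R)$, because then $\ol h_f|_Z=\chi_{B\cap Z}$ and $\nu$ is carried by $Z$. The main technical obstacle will be upgrading formula (b) from $C_c(V(\R))$ to arbitrary non-negative Borel $f$; by monotone convergence this reduces to checking Borel measurability of $\ol h_f$ on $Z$, which is a standard Fubini argument on $V(\R)$ combined with the orbit-quotient structure $V(\R)/G(\R)\cong Z$ provided by \ref{procesischw}.
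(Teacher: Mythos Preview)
Your uniqueness argument and derivation of formula (b) are exactly what the paper does. For existence, however, the paper takes a more direct route: it defines $\mu$ as a set function by $\mu(A):=\int_Z\ol h_A\,d\nu$, where $\ol h_A(\pi x)=\lambda\{g:gx\in A\}$, and then checks countable additivity by hand (from $h_A=\sum_n h_{A_n}$ for a disjoint union) and $G$-invariance from $h_{gA}=h_A$. Your approach via the Riesz representation theorem is also correct, but it buys you less than you might hope: you still have to do the monotone-class extension to pass from $C_c$ to Borel functions, and you need extra topology (that $\pi\colon V(\R)\to Z$ is a quotient map so that $\ol h_f$ is continuous) that the direct construction avoids entirely. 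Note also a small ordering issue: your verification of $\pi_*(\mu)=\nu$ by plugging in $f=\chi_{\pi^{-1}(B)}$ presupposes formula (b) for Borel indicators, so it must come \emph{after} the extension step you flag at the end, not before. Finally, Riesz needs $\Lambda$ to be finite on $C_c$, which tacitly assumes $\nu$ is finite on compacta; the paper's set-function definition works in $[0,\infty]$ and sidesteps this. None of these are genuine gaps, just places where the paper's direct construction is cleaner.
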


\begin{proof}
Let $\lambda$ be the normalized Haar measure on $G(\R)$. We will need
that $\lambda$ is invariant under left and right translation by
elements of $G(\R)$.

Given a Borel set $A$ in $V(\R)$, let $x\in V(\R)$, and write $o_x
\colon G(\R)\to V(\R)$ for the orbit map $o_x(g):=gx$. Put
$$h_A(x)\>:=\>(o_{x*}\lambda)(A)\>=\>\lambda\,\bigl\{g\in G(\R)\colon
gx\in A\bigr\}.$$
The function $h_A\colon V(\R)\to[0,1]$ is measurable (\cite{Ba}
23.2).
Since $o_{gx}=o_x\comp r_g$ (where $r_g\colon h\mapsto hg$ is right
translation by $g\in G(\R)$) we have
$$h_A(gx)=(o_{x*}r_{g*}\lambda)(A)=(o_{x*}\lambda)(A)=h_A(x)$$
for $x\in V(\R)$ and $g\in G(\R)$.
Since the fibres of $\pi\colon V(\R)\onto Z$ are precisely the
$G(\R)$-orbits (Theorem \ref{procesischw}(a)), $h_A$ induces a
measurable function $\ol h_A\colon Z\to[0,1]$ by $\ol h_A(\pi(x))=h_A
(x)$ ($x\in V(\R)$).

Now let $\nu$ be a Borel measure on $Z$, and define
$$\mu(A):=\int_Z\ol h_A(y)\>\nu(dy)$$
for $A\subset V(\R)$ a Borel set. Then $\mu$ is a Borel measure on
$V(\R)$, since if $A=\bigcup_{n\in\N}A_n$ is a countable union of
pairwise disjoint Borel sets in $V(\R)$, we have $h_A=\sum_nh_{A_n}$
(pointwise on $V(\R)$), and therefore $\mu(A)=\sum_n\mu(A_n)$. Also
$h_{gA}=h_A$ for $g\in G(\R)$, and so $\mu$ is $G$-invariant.
>From the construction it is clear that
$\pi_*(\mu)=\nu$.

Let $f\colon V(\R)\to\R_\plus$ be measurable. The function $h_f\colon
V(\R)\to\R\cup\{\infty\}$, $h_f(x):=\int_{G(\R)}f(gx)\>\lambda(dg)$
is again measurable and $G$-invariant, so it induces a measurable
function $\ol h_f\colon Z\to\R\cup\{\infty\}$ as in the proposition.
Given any $G$-invariant measure $\tilde\mu$ on $V(\R)$ with $\pi_*
(\tilde\mu)=\nu$, we have
\begin{eqnarray*}
\int_{V(\R)}f(x)\>\tilde\mu(dx) & = & \int_{V(\R)}\int_{G(\R)}f(gx)\>
  \lambda(dg)\>\tilde\mu(dx) \\
& = & \int_{V(\R)}\ol h_f(\pi(x))\>\mu(dx) \\
& = & \int_Z\ol h_f(y)\>\pi_*(\mu)(dy) \\
& = & \int_Z\ol h_f(y)\>\nu(dy) \\
\end{eqnarray*}
using $\tilde\mu=\sigma_*(\lambda\otimes\tilde\mu)$ and Fubini's
theorem. This establishes both the uniqueness of $\mu$ and the second
part of the proposition.
\end{proof}

\begin{cor}
The operators $\pi_*$ and $\pi^*$ set up a bijective correspondence
between the set of $G$-invariant Borel measures $\mu$ on $V(\R)$ and
the set of all Borel measures $\nu$ on $Z$. In particular, one has
$$\pi^*\pi_*(\mu)=\mu\text{ \ and \ }\pi_*\pi^*(\nu)=\nu.
\eqno\square$$
\end{cor}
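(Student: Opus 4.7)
The plan is to observe that the corollary is essentially a bookkeeping consequence of Proposition \ref{measwginvmeasv}(a), which already provides the nontrivial existence and uniqueness. I would first note that $\pi_*$ sends $G$-invariant Borel measures on $V(\R)$ to Borel measures on $Z$, since $\pi(V(\R)) = Z$ means $\pi_*(\mu)$ is supported on $Z$; and $\pi^*$ sends Borel measures on $Z$ to $G$-invariant Borel measures on $V(\R)$ by the very definition in Proposition \ref{measwginvmeasv}(a). So the two operators are well-defined between the indicated sets.

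Next I would verify the two identities. The identity $\pi_* \pi^*(\nu) = \nu$ is already part of the assertion of Proposition \ref{measwginvmeasv}(a): by construction, $\pi^*(\nu)$ is the unique $G$-invariant Borel measure on $V(\R)$ whose direct image under $\pi$ is $\nu$. For $\pi^* \pi_*(\mu) = \mu$ with $\mu$ a $G$-invariant Borel measure on $V(\R)$, I would argue by the uniqueness clause of \ref{measwginvmeasv}(a): setting $\nu := \pi_*(\mu)$, both $\mu$ and $\pi^*(\nu)$ are $G$-invariant Borel measures on $V(\R)$ whose pushforward under $\pi$ equals $\nu$, hence they must coincide.

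Finally, the bijectivity is an immediate formal consequence of the two identities: $\pi^* \comp \pi_* = \id$ forces $\pi_*$ to be injective on $G$-invariant measures and $\pi^*$ to be surjective onto this set, while $\pi_* \comp \pi^* = \id$ forces $\pi^*$ to be injective and $\pi_*$ (restricted to $G$-invariant measures) to be surjective onto Borel measures on $Z$.

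There is no real obstacle here; the entire content has been absorbed into Proposition \ref{measwginvmeasv}. The only mild subtlety worth mentioning explicitly is that $\pi_*(\mu)$ is indeed a measure on $Z$ (not merely on $W(\R)$), which uses the fact from Theorem \ref{procesischw}(a) that the nonempty fibres of $\pi\colon V(\R) \to W(\R)$ are the $G(\R)$-orbits, so the support of $\pi_*(\mu)$ sits inside $\pi(V(\R)) = Z$.
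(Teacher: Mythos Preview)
Your proposal is correct and matches the paper's approach: the paper marks this corollary with a $\square$ and gives no separate proof, since it is an immediate formal consequence of the existence and uniqueness in Proposition~\ref{measwginvmeasv}(a), exactly as you spell out. One tiny remark: the fact that $\pi_*(\mu)$ is supported on $Z$ follows directly from $Z=\pi(V(\R))$ (so $\pi^{-1}(W(\R)\setminus Z)=\emptyset$) and does not actually require Theorem~\ref{procesischw}(a).
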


\begin{cor}\Label{munurelation}
Let $\nu$ be a Borel measure on $Z$, and let $\mu=\pi^*\nu$.
\begin{itemize}
\item[(a)]
For any $f\in\R[V]$ we have $\int_{V(\R)}f\>d\mu=\int_Z\rey(f)\>
d\nu$ (one integral exists iff the other exists).
\item[(b)]
For any $G(\R)$-invariant Borel set $A$ in $V(\R)$ we have $\mu(A)=
\nu(\pi(A))$.
\item[(c)]
Given any $G$-invariant closed subset $K$ of $V(\R)$, we have $\supp
(\mu)\subset K$ iff $\supp(\nu)\subset\pi(K)$.
\end{itemize}
\end{cor}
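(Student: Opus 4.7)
The three parts unfold in sequence, with part (a) doing the main work and parts (b), (c) following by specialization and topological bookkeeping.

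For part (a), the key observation is that the function $\ol h_f$ appearing in Proposition \ref{measwginvmeasv}(b) is nothing but the restriction to $Z$ of the invariant polynomial $\rey(f)\in\R[W]$. Indeed, Proposition \ref{reyhaar} gives $(\rey_V f)(x)=\int_{G(\R)}f(gx)\,d\lambda(g)=h_f(x)$, and since $\rey(f)$ is $G$-invariant it factors through $\pi$, so $h_f(x)=\rey(f)(\pi x)$, which means $\ol h_f$ coincides with $\rey(f)|_Z$. To get the identity for a general (possibly sign-changing) $f\in\R[V]$, I would apply \ref{measwginvmeasv}(b) separately to the non-negative functions $f^+$ and $f^-$ and subtract; equivalence of integrability is handled by applying the non-negative case to $|f|$, which gives $\int|f|\,d\mu=\int\rey(|f|)\,d\nu$, and noting $|\rey(f)|\le\rey(|f|)$ pointwise so the two integrability conditions match up.

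For part (b), the same mechanism applies to the bounded non-negative measurable function $f=\mathbf 1_A$ (the formula of \ref{measwginvmeasv}(b) is stated for any such $f$, not just polynomials). Since $A$ is $G$-invariant, $\mathbf 1_A(gx)=\mathbf 1_A(x)$ for all $g\in G(\R)$, so $\ol h_A(\pi x)=\mathbf 1_A(x)=\mathbf 1_{\pi(A)}(\pi x)$; the identity $\mu(A)=\int\mathbf 1_A\,d\mu=\int\mathbf 1_{\pi(A)}\,d\nu=\nu(\pi(A))$ then drops out. Along the way this shows that $\pi(A)$ is Borel in $Z$, as the preimage of $\{1\}$ under the measurable function $\ol h_A$.

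For part (c), apply (b) to the $G$-invariant Borel set $A=V(\R)\setminus K$ (open because $K$ is closed): one obtains $\mu(V(\R)\setminus K)=\nu(\pi(V(\R)\setminus K))$. Since $K$ is $G$-invariant, $\pi^{-1}(\pi(K))=K$, and hence $\pi(V(\R)\setminus K)=Z\setminus\pi(K)$, so $\mu(V(\R)\setminus K)=\nu(Z\setminus\pi(K))$. Now $\supp(\mu)\subset K$ is equivalent to $\mu(V(\R)\setminus K)=0$, and similarly $\supp(\nu)\subset\pi(K)$ is equivalent to $\nu(Z\setminus\pi(K))=0$; here I would invoke Theorem \ref{procesischw}(c) to record that $\pi(K)$ is closed in $W(\R)$ (hence in $Z$), so that $Z\setminus\pi(K)$ is the relevant open complement for the support condition. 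The two zero-measure conditions are now manifestly equivalent.

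The only genuine pitfall is the integrability clause in (a): one must be careful to avoid conflating $\rey(|f|)$ with $|\rey(f)|$. Once this is pinned down by the Jensen-type inequality $|\rey(f)|\le\rey(|f|)$ (pointwise on $V(\R)$) together with the non-negative case of the formula, the rest is essentially formal bookkeeping.
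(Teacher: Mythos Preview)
Your arguments for the identities in (a), (b), (c) are correct and essentially match the paper. The identification $\ol h_f=\rey(f)|_Z$ via Proposition~\ref{reyhaar} is exactly the paper's proof of (a), and (c) is deduced from (b) in both treatments. For (b) the paper takes a slightly shorter route than yours: since $\nu=\pi_*\mu$ by the very definition of $\pi^*$, one has directly $\nu(\pi A)=\mu(\pi^{-1}\pi A)=\mu(A)$, using that the fibres of $\pi$ are the $G(\R)$-orbits (Theorem~\ref{procesischw}(a)) so that $\pi^{-1}\pi A=A$ for $G$-invariant $A$. Your route through the integral formula of Proposition~\ref{measwginvmeasv}(b) also works.

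There is, however, a gap in your treatment of the integrability clause in (a). From $|\rey(f)|\le\rey(|f|)$ together with $\int|f|\,d\mu=\int\rey(|f|)\,d\nu$ you obtain only one implication: $\mu$-integrability of $f$ forces $\nu$-integrability of $\rey(f)$. The converse does not follow from this inequality, and is in fact false as literally stated. Take $G=\mu_2$ acting on $V=\A^1$ by $x\mapsto-x$, so $Z=[0,\infty)$ with coordinate $u=x^2$, and let $f(x)=x$; then $\rey(f)=0$ is $\nu$-integrable for every $\nu$, while $\int_\R|x|\,d\mu=\int_Z\sqrt u\,d\nu$ may be infinite. The paper's own proof does not address the parenthetical clause at all, so this is really an imprecision in the corollary itself rather than a defect peculiar to your argument; but you should not claim that the two integrability conditions ``match up'' when only one direction has actually been justified.
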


\begin{proof}
(b) is clear from $\nu=\pi_*(\mu)$ (using again that the fibres of
$\pi$ are the $G(\R)$-orbits), and (c) follows from (b). As to (a),
we have
$$\int_{V(\R)}f\>d\mu=\int_Z\ol h_f(y)\>d\nu(y)$$
by \ref{measwginvmeasv}. But $\ol h_f(\pi x)=\int_{G(\R)}f(gx)\>
d\lambda(g)$ is just $(\rey f)(x)$, for $x\in V(\R)$, see Proposition
\ref{reyhaar}. So $\ol h_f=\rey(f)$ as functions on $Z$.
\end{proof}

\begin{rem}
In the situation of Corollary \ref{munurelation}, assume that every
function in $\R[V]$ is $\mu$-integrable (or equivalently, by
\ref{munurelation}(a), that every function in $\R[W]$ is
$\nu$-integrable). Then the linear forms $L_\mu\in\R[V]^\du$ and
$L_\nu\in\R[W]^\du$ are related by
\begin{center}
$L_\mu=L_\nu\comp\rey$ \ and \ $L_\nu=(L_\mu)\big|_{\R[W]}$.
\end{center}
\end{rem}

\begin{lem}\Label{equiconds}
Let $K\subset V(\R)$ be a $G$-invariant basic closed set, and let
$L\in\R[V]^\du$ be a $G$-invariant linear form. The following
conditions are all equivalent:
\begin{itemize}
\item[(i)]
There is a measure $\mu$ on $K$ with $L(f)=\int_Kf\>d\mu$ for every
$f\in\R[V]^G$;
\item[(ii)]
there is a $G$-invariant measure $\mu$ on $K$ with $L(f)=\int_Kf\>
d\mu$ for every $f\in\R[V]$;
\item[(iii)]
there is a measure $\nu$ on $\pi(K)$ with $L(f)=\int_{\pi(K)}f\>d\nu$
for every $f\in\R[W]$;
\item[(iv)]
$L(f)\ge0$ for every $f\in\scrP_V(K)^G=\scrP_W(\pi K)$.
\end{itemize}
\end{lem}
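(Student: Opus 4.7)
The plan is to prove the cyclic chain of implications (iv) $\To$ (iii) $\To$ (ii) $\To$ (i) $\To$ (iv). Observe first that the identity $\scrP_V(K)^G = \scrP_W(\pi K)$ asserted in (iv) is already furnished by Corollary \ref{basic}, so that piece of the statement requires no additional work.

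Three of the four implications should follow easily from the machinery already in place. The implication (ii) $\To$ (i) is a mere restriction from $\R[V]$ to $\R[V]^G$. For (i) $\To$ (iv), if $f \in \scrP_V(K)^G$ then $f \in \R[V]^G$ and $f \ge 0$ on $K$, hence $L(f) = \int_K f\,d\mu \ge 0$. For (iii) $\To$ (ii), I would take $\mu := \pi^*(\nu)$, the $G$-invariant Borel measure on $V(\R)$ supplied by Proposition \ref{measwginvmeasv} (extending $\nu$ by zero to all of $Z$ if needed). By Corollary \ref{munurelation}(c) we have $\supp(\mu) \subset K$, because $\supp(\nu) \subset \pi(K)$; and for every $f \in \R[V]$, part~(a) of the same corollary combined with the $G$-invariance of $L$ (Lemma \ref{ginvlinf}) yields
$$\int_K f\,d\mu \;=\; \int_{\pi(K)} \rey(f)\,d\nu \;=\; L(\rey f) \;=\; L(f).$$

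The substantive step is (iv) $\To$ (iii), which I would establish by invoking Haviland's theorem. The key preparatory point is that $\pi(K)$ is a closed subset of $W(\R)$; in fact it is basic closed by Proposition \ref{imbc}, using the semialgebraic properness of $\pi$ from Theorem \ref{procesischw}(c). The restriction $F := L|_{\R[W]}$ is a linear form on the finitely generated $\R$-algebra $\R[W]$, and by (iv) together with $\scrP_V(K)^G = \scrP_W(\pi K)$ it satisfies $F(f) \ge 0$ whenever $f \in \R[W]$ is non-negative on $\pi(K)$. Embedding $W$ into some $\A^m$ via a choice of generators and applying Haviland's theorem to the closed set $\pi(K) \subset \R^m$ then produces a Borel measure $\nu$ on $\pi(K)$ representing $F$. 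Since $L$ is $G$-invariant and hence recovered from its restriction via $L = F \comp \rey$ (Lemma \ref{ginvlinf}), this is precisely the content of (iii).

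The only genuine obstacle is to ensure that Haviland's theorem is applicable, which comes down to the closedness of $\pi(K)$ in $W(\R)$. This hinges on the semialgebraic properness of $\pi$, i.e.\ ultimately on the compactness of $G(\R)$, which is the standing hypothesis of this section. All remaining steps are formal manipulations interlinking the linear form $L$ on $\R[V]$, its restriction to $\R[V]^G = \R[W]$, and the Borel measures paired by the operators $\pi_*$ and $\pi^*$.
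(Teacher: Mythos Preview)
Your proof is correct and uses the same essential ingredients as the paper: Haviland's theorem for the link between (iii) and (iv), the pullback $\pi^*$ from Proposition~\ref{measwginvmeasv} together with Corollary~\ref{munurelation} for (iii) $\To$ (ii), and Corollary~\ref{basic} for the identity $\scrP_V(K)^G=\scrP_W(\pi K)$. The only organizational difference is that the paper closes the loop among (i)--(iii) by proving (i) $\To$ (iii) directly via the pushforward $\nu:=\pi_*(\mu)$, whereas you route that step through (iv); your detour trades the explicit construction of $\nu$ for the trivial observation $L(f)=\int_K f\,d\mu\ge0$ and then recovers $\nu$ from Haviland, which is a harmless reshuffling.
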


\begin{proof}
(i) $\To$ (iii):
Assuming (i), let $\nu:=\pi_*(\mu)$. Then for every $f\in\R[W]$ we
have $\int_{\pi(K)}f\>d\nu=\int_Kf\comp\pi\>d\mu=L(f)$ by (i).
(iii) $\To$ (ii):
Assuming (iii), put $\mu:=\pi^*(\nu)$. Then $\mu$ is $G$-invariant.
Using \ref{munurelation}(a) we have $\int_Kf\>d\mu=\int_{\pi(K)}\rey
(f)\>d\nu=L(\rey(f))=L(f)$ for $f\in\R[V]$, where the last equality
holds since $L$ is $G$-invariant (\ref{ginvlinf}). The implication
(ii) $\To$ (i) is trivial.

By Haviland's theorem, (iii) is equivalent to $L(f)\ge0$ for every
$f\in\scrP_W(\pi K)$. By Corollary \ref{basic}, $\scrP_W (\pi K)=
\scrP_V(K)^G$.
\end{proof}

\begin{cor}
Let $L\in\R[V]^\du$ be a linear functional on $\R[V]$ which is
integration with respect to some Borel measure on $V(\R)$.
Then $L$ is $G$-invariant if and only if there exists a $G$-invariant
Borel measure $\mu$ on $V(\R)$ with $L=L_\mu$.
\end{cor}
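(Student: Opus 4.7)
The plan is to reduce the corollary immediately to Lemma~\ref{equiconds} applied with $K=V(\R)$, which is a $G$-invariant basic closed subset (defined, say, by the empty set of inequalities).

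The easy implication is ``if'': suppose $L=L_\mu$ for a $G$-invariant Borel measure $\mu$. Then for every $f\in\R[V]$ and $g\in G(\R)$, the translation-invariance of $\mu$ together with the change-of-variables formula gives
$$L(f^g)=\int_{V(\R)}f(gx)\>d\mu(x)=\int_{V(\R)}f\>d\mu=L(f),$$
so $L$ is $G$-invariant.

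For the non-trivial direction, assume $L$ is $G$-invariant and $L=L_{\mu_0}$ for some (not necessarily invariant) Borel measure $\mu_0$ on $V(\R)$. Take $K:=V(\R)$, which is $G$-invariant and basic closed. Then the measure $\mu_0$ witnesses condition~(i) of Lemma~\ref{equiconds}: for every $f\in\R[V]^G$,
$$L(f)=\int_Kf\>d\mu_0.$$
Since $L$ is $G$-invariant, Lemma~\ref{equiconds} yields (ii): there exists a $G$-invariant Borel measure $\mu$ on $K=V(\R)$ with $L(f)=\int_Kf\>d\mu$ for every $f\in\R[V]$. This is exactly the $\mu$ we want.

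There is no real obstacle: the whole content is repackaging. The only thing to verify is that $V(\R)$ qualifies as a basic closed set in the sense of Lemma~\ref{equiconds}; this is clear since $V(\R)=\scrS_V(1)$ (or with empty list of defining inequalities). Everything else is a direct citation.
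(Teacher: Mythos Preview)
Your proof is correct and follows the same route as the paper: both reduce to Lemma~\ref{equiconds} with $K=V(\R)$, using that the given measure verifies condition~(i) and then invoking the equivalence with~(ii). The paper additionally names the resulting measure explicitly as $\mu=\pi^*\pi_*(\tilde\mu)$ and points to the chain (i)~$\To$~(iii)~$\To$~(ii), but this is the same argument.
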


\begin{proof}
The `if' part is obvious anyway. Conversely, if $\tilde\mu$ is some
measure on $V(\R)$ with $L=L_{\tilde\mu}$, then $\mu:=\pi^*\pi_*
\tilde\mu$ is a $G$-invariant Borel measure with $L_\mu=L$ (see (i)
$\To$ (iii) $\To$ (ii) in the last proof).
\end{proof}

\begin{lab}\Label{recallkmp}
Given a basic closed set $K\subset V(\R)$ and a quadratic module $M$
in $\R[V]$, recall that $M$ is said to \emph{solve the $K$-moment
problem} if the linear forms $L\in\R[V]^\du$ with $L|_M\ge0$ are
precisely the $K$-moment functionals, i.e., the linear forms
represented by Borel measures on $K$. By Haviland's theorem, it is
equivalent that the closure $\ol M$ of $M$ is equal to $\scrP_V(K)$.
Here and in the sequel, the closure refers to the finest locally
convex vector space topology on $\R[V]$. See \cite{PSch}, for
example.

If, for given $K$, such $M$ can be found which is finitely generated
(as a quadratic module), this allows a characterization of the
$K$-moment functionals by an explicit recursive sequence of
conditions. Indeed, if $M=\Sigma f_1+\cdots+\Sigma f_r$, say (with
$\Sigma:=\Sigma\R[V]^2$), then $L$ is a $K$-moment functional if and
only if $L(q^2f_i)\ge0$ for every $q\in\R[V]$ and $i=1,\dots,r$; and
for a fixed $i$, this translates into a positive semidefiniteness
condition for a countable generalized Hankel matrix which depends in
a direct explicit way on $L$ and $f_i$.

Following \cite{Sm}, we say that $M$ has the \emph{strong moment
property (SMP)} if the closure $\ol M$ of $M$ is saturated. Thus $M$
solves the $K$-moment problem iff $M$ has (SMP) and $\scrX_V(M)=
\wt K$.

We are now going to study variants of this notion which take the
group action into account. The idea is, while it may be hard or even
impossible to characterize all moment functionals of Borel measures
on $K$, the task may become easier if one only aims at characterizing
the \emph{invariant} moment functionals.
\end{lab}

\begin{dfn}
Let $K$ be a $G$-invariant basic closed set in $V(\R)$, and let $N$
be a
% finitely generated --- better drop it for the definition!
quadratic module in $\R[W]=\R[V]^G$. We'll say that $N$ \emph{solves
the invariant $K$-moment problem}, if the following is true for every
$G$-invariant linear functional $L\colon\R[V]\to\R$:
\begin{quote}
\emph{Conditions (i)--(iv) of \ref{equiconds} hold for $L$ if and
only if $L(f)\ge0$ for every $f\in N$.}
\end{quote}
\end{dfn}

\begin{cor}\Label{charactimp}
Let $K\subset V(\R)$ be basic closed and $G$-invariant, and let $N$
be a
quadratic module in $\R[W]$. The following are equivalent:
\begin{itemize}
\item[(i)]
$N$ solves the invariant $K$-moment problem;
\item[(ii)]
$\ol N=\scrP_W(\pi K)$;
\item[(iii)]
$N$ solves the (usual) $\pi(K)$-moment problem in $W$.
\end{itemize}
\end{cor}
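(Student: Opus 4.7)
The plan is to reduce everything to a straightforward translation exercise using Lemma \ref{ginvlinf} and Lemma \ref{equiconds}, together with the definition of the moment problem recalled in \ref{recallkmp}.

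The equivalence (ii) $\iff$ (iii) is essentially a restatement. Indeed, by Haviland's theorem as recalled in \ref{recallkmp}, saying that the quadratic module $N$ in $\R[W]$ solves the $\pi(K)$-moment problem in $W$ is by definition equivalent to $\ol N = \scrP_W(\pi K)$ (here $\pi(K) \subset W(\R)$ is closed by Theorem \ref{procesischw}(b), so Haviland applies).

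For (i) $\iff$ (iii), I would set up a bijection between the relevant linear functionals. By Lemma \ref{ginvlinf}, the assignment $F \mapsto L := F \comp \rey$ is an isomorphism from $\R[W]^\du$ onto the space of $G$-invariant linear forms on $\R[V]$, with inverse $L \mapsto L|_{\R[W]}$. Under this bijection, the two relevant conditions match up cleanly:
\begin{itemize}
\item Since $N \subset \R[W]$ and $\rey$ is the identity on $\R[W]$, for $f \in N$ one has $L(f) = F(\rey(f)) = F(f)$. Hence $L|_N \ge 0 \iff F|_N \ge 0$.
\item By Lemma \ref{equiconds}, condition (iii) of that lemma (the existence of a representing measure on $\pi(K)$ for the restriction of $L$ to $\R[W]$, i.e.\ for $F$) is equivalent to conditions (i)--(iv) for $L$. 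In other words, $L$ satisfies (i)--(iv) of \ref{equiconds} if and only if $F$ is a $\pi(K)$-moment functional.
\end{itemize}
Combining these two observations, the statement that $N$ solves the invariant $K$-moment problem becomes precisely: for every $F \in \R[W]^\du$, $F$ is a $\pi(K)$-moment functional if and only if $F|_N \ge 0$. This is the definition of $N$ solving the $\pi(K)$-moment problem, giving (i) $\iff$ (iii).

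There is no genuine obstacle here; the argument is a matter of bookkeeping, and the substance has already been done in Lemmas \ref{ginvlinf} and \ref{equiconds} (the latter itself relying on Corollary \ref{basic} to identify $\scrP_V(K)^G$ with $\scrP_W(\pi K)$). The only point to be careful about is ensuring that the correspondence $L \leftrightarrow F$ is applied consistently, and that one invokes Haviland's theorem on $W(\R)$ with the closed set $\pi(K)$, which is legitimate thanks to Theorem \ref{procesischw}(b).
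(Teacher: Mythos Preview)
Your proof is correct and follows essentially the same approach as the paper: the paper also proves (ii)\,$\iff$\,(iii) via the Haviland characterization recalled in \ref{recallkmp}, and (i)\,$\iff$\,(iii) via Lemma \ref{equiconds}. You have simply spelled out more explicitly the role of the bijection from Lemma \ref{ginvlinf} and noted that Theorem \ref{procesischw}(b) guarantees $\pi(K)$ is closed, both of which the paper leaves implicit.
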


\begin{proof}
The equivalence of (ii) and (iii) is well-known (and recalled in
\ref{recallkmp}). The equivalence of (i) and (iii) is clear from
Lemma \ref{equiconds}.
\end{proof}

\begin{lab}
We also introduce a weakening of this notion. Let again $K$ be a
$G$-invariant basic closed set in $V(\R)$, and let $M$ be a
% finitely generated: not for the definition
quadratic module in $\R[V]$. We say that $M$ solves the
\emph{averaged $K$-moment problem} if the following is true for every
$G$-invariant linear functional $L\colon\R[V]\to\R$:
\begin{quote}
\emph{Conditions (i)--(iv) of \ref{equiconds} hold for $L$ if and
only if $L(f)\ge0$ for every $f\in M$.}
% NOTE: L(f) = L(\rey f) since L is supposed to be G-invariant!
\end{quote}
\end{lab}

\begin{cor}\Label{charactamp}
Let $K\subset V(\R)$ be basic closed and $G$-invariant, and let $M$
be a quadratic module in $\R[V]$. The following are equivalent:
\begin{itemize}
\item[(i)]
$M$ solves the averaged $K$-moment problem;
\item[(ii)]
$\ol{\rey(M)}=\scrP_W(\pi K)$;
\item[(iii)]
$\rey(M)$ solves the (usual) $\pi(K)$-moment problem in $W$.
\end{itemize}
\end{cor}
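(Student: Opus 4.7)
The plan is to reduce Corollary~\ref{charactamp} to Corollary~\ref{charactimp} applied to the quadratic module $N := \rey(M)$ in $\R[W]$. First I would verify that $\rey(M)$ really is a quadratic module in $\R[W]$: it contains $1 = \rey(1)$, is additively closed by linearity of $\rey$, and by Corollary~\ref{reyinvmod}(b) it is an $S_0$-module; since $\Sigma \R[W]^2 \subset S_0$, it is in particular closed under multiplication by squares of elements of $\R[W]$, which is exactly what is needed.

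The crucial observation is that the averaged $K$-moment problem for $M$ is essentially \emph{the same} problem as the invariant $K$-moment problem for $N = \rey(M)$. Indeed, for any $G$-invariant linear form $L \in \R[V]^\du$, Lemma~\ref{ginvlinf} yields $L(f) = L(\rey f)$ for every $f \in \R[V]$, and hence
$$L(f) \ge 0 \text{ for all } f \in M \iff L(h) \ge 0 \text{ for all } h \in \rey(M).$$
In view of this, condition (i) of Corollary~\ref{charactamp} becomes literally the assertion that $\rey(M)$ solves the invariant $K$-moment problem in the sense of the preceding definition.

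Applying Corollary~\ref{charactimp} to $N = \rey(M)$ then yields the equivalence with (ii) $\ol{\rey(M)} = \scrP_W(\pi K)$ and with (iii) $\rey(M)$ solving the ordinary $\pi(K)$-moment problem in $W$. The argument is purely formal once Lemma~\ref{ginvlinf} and Corollary~\ref{charactimp} are in hand; I do not foresee any real obstacle, the only point requiring a line of justification being the (routine) verification that $\rey(M)$ is a quadratic module in $\R[W]$.
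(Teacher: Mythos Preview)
Your proof is correct and essentially the same as the paper's, just organized slightly differently: you factor the argument through Corollary~\ref{charactimp} applied to $N=\rey(M)$ (after checking $\rey(M)$ is a quadratic module via Corollary~\ref{reyinvmod}(b)), whereas the paper appeals directly to Lemma~\ref{equiconds} for (i)~$\iff$~(ii) and to the general duality recalled in~\ref{recallkmp} for (ii)~$\iff$~(iii). The underlying ingredients---Lemma~\ref{ginvlinf}, Lemma~\ref{equiconds}, and the Haviland-type duality---are identical in both.
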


\begin{proof}
(i) $\iff$ (ii) is clear from Lemma \ref{equiconds}. Again, (ii)
$\iff$ (iii) is well-known (see \ref{recallkmp}).
\end{proof}

\begin{rem}
Let $K\subset V(\R)$ be $G$-invariant, and let $N$ be a quadratic
module in $\R[W]$. If $N$ solves the invariant $K$-moment problem,
then $K$ is determined by $N$ via $K=\scrS_V(N)$. Similarly, if $M
\subset\R[V]$ solves the averaged $K$-moment problem, then $K=\scrS_V
(M)$.

Keeping this in mind, Corollary \ref{charactimp} justifies the
following terminology: A quadratic module $N$ in $\R[W]$ has the
\emph{invariant moment property (IMP)} if $\ol N$ is saturated in
$\R[W]$, that is, if $N$ has the (SMP) (in $\R[W]$).

Similarly, Corollary \ref{charactamp} justifies us to say: A
quadratic module $M$ in $\R[V]$ has the \emph{averaged moment
property (AMP)} if $\ol{\rey(M)}$ is saturated in $\R[W]$, that is,
if $\rey(M)$ has the (SMP) (in $\R[W]$).

Thus, $M$ has the (AMP) if and only if $\rey(M)$ has the (IMP). Of
course, we are usually only interested in (IMP) or (AMP) when the
corresponding module is finitely generated.
\end{rem}

\begin{prop}\Label{mpimp}
Let $K\subset V(\R)$ be a $G$-invariant basic closed set, and let $M$
be a quadratic module in $\R[V]$ which solves the (usual) $K$-moment
problem in $\R[V]$. Then $M$ has the averaged moment property (AMP).
\end{prop}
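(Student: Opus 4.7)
The plan is to prove the equivalent condition that $\rey(M)$ solves the usual $\pi(K)$-moment problem in $\R[W]$ (this is condition (iii) of Corollary \ref{charactamp}). So I would start with an arbitrary linear functional $L \colon \R[W] \to \R$ satisfying $L(f) \ge 0$ for every $f \in \rey(M)$, and try to exhibit a Borel measure $\nu$ on $\pi(K)$ representing $L$.

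The key idea is to lift $L$ to a $G$-invariant linear functional on $\R[V]$. Following Lemma \ref{ginvlinf}, I would set $\tilde L := L \circ \rey \colon \R[V] \to \R$, which is automatically $G$-invariant. For any $f \in M$, we have $\tilde L(f) = L(\rey(f)) \ge 0$ by the assumption on $L$, since $\rey(f) \in \rey(M)$. Thus $\tilde L$ is non-negative on $M$.

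Next, I would invoke the hypothesis: since $M$ solves the (usual) $K$-moment problem in $\R[V]$, there exists a Borel measure $\tilde\mu$ on $K$ with $\tilde L(f) = \int_K f\,d\tilde\mu$ for every $f \in \R[V]$. In particular $\tilde L$ satisfies condition (i) of Lemma \ref{equiconds} (with the measure supported on the $G$-invariant set $K$), and because $\tilde L$ is itself $G$-invariant, the equivalence in Lemma \ref{equiconds} gives us condition (iii): there exists a Borel measure $\nu$ on $\pi(K)$ such that $\tilde L(f) = \int_{\pi(K)} f\,d\nu$ for every $f \in \R[W]$. Concretely, one may take $\nu := \pi_*(\pi^* \pi_* \tilde\mu) = \pi_* \tilde\mu$ using the machinery of Proposition \ref{measwginvmeasv} and Corollary \ref{munurelation}.

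Finally, since $\rey$ acts as the identity on $\R[W]$, for $f \in \R[W]$ we have $L(f) = L(\rey(f)) = \tilde L(f) = \int_{\pi(K)} f\,d\nu$. So $L$ is represented by $\nu$, a Borel measure on $\pi(K)$, completing the argument. There is no serious obstacle here: the whole proof is a formality once one recognizes that invariant functionals on $\R[V]$ correspond bijectively, via $\rey$, to functionals on $\R[W]$, and that Lemma \ref{equiconds} converts a representing measure on $K$ into a representing measure on $\pi(K)$. The only point worth double-checking is that the correspondence respects positivity on $M$ versus positivity on $\rey(M)$, which follows directly from the $R[W]$-linearity and self-reproducing property of $\rey$.
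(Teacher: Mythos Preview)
Your proof is correct. The only point left implicit is the easy inclusion $\rey(M)\subset\scrP_W(\pi K)$, needed so that every $\pi(K)$-moment functional is automatically nonnegative on $\rey(M)$; this follows at once from $M\subset\ol M=\scrP_V(K)$ together with Corollary~\ref{basic}.

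Your route, however, differs genuinely from the paper's. The paper argues directly with closures of cones: from the hypothesis $\ol M=\scrP_V(K)$ it applies $\rey$ and uses its continuity (for the finest locally convex topology on $\R[V]$) to obtain the chain
\[
\rey(M)\ \subset\ \rey\bigl(\scrP_V(K)\bigr)\ =\ \rey(\ol M)\ \subset\ \ol{\rey(M)},
\]
and since $\rey(\scrP_V(K))=\scrP_W(\pi K)$ (Corollary~\ref{basic}) is closed, this forces $\ol{\rey(M)}=\scrP_W(\pi K)$ in three lines. Your argument is the dual one: instead of cones and closures you work with functionals and measures, lifting $L$ to the $G$-invariant $\tilde L=L\comp\rey$ via Lemma~\ref{ginvlinf}, invoking the hypothesis to obtain a representing measure on $K$, and then pushing it down to $\pi(K)$ through Lemma~\ref{equiconds}. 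The paper's proof is shorter and never touches a measure; yours is more concrete and makes the underlying mechanism---the bijection between $G$-invariant functionals on $\R[V]$ and arbitrary functionals on $\R[W]$---fully explicit. Both proofs ultimately rest on Corollary~\ref{basic}, which in your version enters through condition~(iv) of Lemma~\ref{equiconds}.
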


\begin{proof}
The hypothesis says $\ol M=\scrP_V(K)$. Application of the Reynolds
operator yields
\begin{center}
$\rey(M)\subset\rey(\scrP_V(K))=\rey(\ol M)\subset\ol{\rey(M)}$.
\end{center}
Since $\rey(\scrP_V(K))=\scrP_W(\pi K)$ (\ref{basic}) is closed,
this implies $\ol{\rey(M)}=\scrP_W(\pi K)$.
\end{proof}

\begin{rem}\Label{modimpamp}
We now compare the various moment properties considered. Let $K$
be a $G$-invariant basic closed set in $V(\R)$, let $N$ be a
quadratic module in $\R[V]^G=\R[W]$, and let $M$ be the quadratic
module in $\R[V]$ generated by $N$. We consider the following three
properties of $N$:
\begin{itemize}
\item[(IMP$_K$)]
$N$ solves the invariant $K$-moment problem;
\item[(AMP$_K$)]
$M$ solves the averaged $K$-moment problem;
\item[(SMP$_K$)]
$M$ solves the (ordinary) $K$-moment problem (in $V$).
\end{itemize}
\end{rem}

\begin{cor}\Label{remark 6.18}
We have the implications (SMP$_K$) $\To$ (AMP$_K$) and (IMP$_K$)
$\To$ (AMP$_K$). On the other hand, neither (IMP$_K$) nor (AMP$_K$)
implies (SMP$_K$), in general.
\end{cor}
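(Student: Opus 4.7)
The positive implications should come quickly from results already established. (SMP$_K$) $\To$ (AMP$_K$) is essentially Proposition \ref{mpimp} applied to the quadratic module $M$ in $\R[V]$ generated by $N$: the hypothesis says $\ol M=\scrP_V(K)$, and the proof of \ref{mpimp} then delivers $\ol{\rey(M)}=\scrP_W(\pi K)$, which is (AMP$_K$) via Corollary \ref{charactamp}. For (IMP$_K$) $\To$ (AMP$_K$) the plan is to translate both conditions through Corollaries \ref{charactimp} and \ref{charactamp}: the hypothesis becomes $\ol N=\scrP_W(\pi K)$, and the conclusion becomes $\ol{\rey(M)}=\scrP_W(\pi K)$. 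Proposition \ref{lem2po}(a) identifies $\rey(M)=M\cap\R[W]$ with the $S_0$-module in $\R[W]$ generated by $N$; hence $N\subset\rey(M)$ and so $\scrP_W(\pi K)=\ol N\subset\ol{\rey(M)}$. Conversely, the preordering $\scrP_W(\pi K)$ contains both $N$ (by hypothesis) and $S_0\subset\Sigma\R[W]^2$, so it contains the $S_0$-module they generate, i.e.\ $\rey(M)\subset\scrP_W(\pi K)$; since the right-hand side is closed, $\ol{\rey(M)}\subset\scrP_W(\pi K)$.

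For the negative assertions, the key observation is that since (IMP$_K$) $\To$ (AMP$_K$) is already established, any single example in which (IMP$_K$) holds while (SMP$_K$) fails simultaneously refutes (IMP$_K$) $\To$ (SMP$_K$) and (AMP$_K$) $\To$ (SMP$_K$). My plan is to produce such an example by exploiting the drop in dimension effected by a positive-dimensional compact group. A clean prototype is $G=\mathrm{O}_n$ acting orthogonally on $V=\A^n$ with $n\ge2$: here $\R[V]^G=\R[r]$ for $r=x_1^2+\cdots+x_n^2$, so $W\cong\A^1$ and $\pi\colon\R^n\to\R$ is the squared-norm map. Let $K=V(\R)=\R^n$, which is $G$-invariant with $\pi(K)=[0,\infty)$, and let $N$ be the preordering in $\R[r]$ generated by $r$. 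Classical Stieltjes theory says that $N$ is already saturated in $\R[r]$, so $\ol N=\scrP_W([0,\infty))=\scrP_W(\pi K)$ and (IMP$_K$) holds. However, the quadratic module $M$ in $\R[V]$ generated by $N$ coincides with $\Sigma\R[V]^2$, because $r$ itself is a sum of squares; and for $n\ge2$ the cone $\Sigma\R[x_1,\dots,x_n]^2$ is well known not to solve the $\R^n$-moment problem (Berg--Christensen--Jensen), so (SMP$_K$) fails.

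The main obstacle lies in the negative part. The forward implications reduce almost formally to Propositions \ref{mpimp}, \ref{lem2po} and Corollaries \ref{charactimp}, \ref{charactamp}, and require no further ideas. Producing a counterexample requires either citing the external failure of (SMP) for $\Sigma\R[x]^2$ on $\R^n$ with $n\ge2$ from the multidimensional moment literature, or substituting one of the examples constructed explicitly elsewhere in Section~6, where (IMP) or (AMP) is verified by hand while (SMP) is refuted by an ad hoc argument exploiting the non-compactness of $K$.
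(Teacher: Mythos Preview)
Your positive implications match the paper's proof almost exactly. One slip: you write $S_0\subset\Sigma\R[W]^2$, but the inclusion goes the other way ($\Sigma\R[W]^2\subset S_0$, while $S_0$ can be strictly larger, cf.\ Example~\ref{s0notfg}). This does not damage the argument, since what you actually need is $S_0\subset\scrP_W(\pi K)$, which holds because every element of $S_0$ is a sum of squares in $\R[V]$ and hence non-negative on all of $Z\supset\pi(K)$. The paper's version is slightly more economical: it simply observes $N\subset\rey(M)\subset\scrP_W(\pi K)$ (the right inclusion coming from $M\subset\scrP_V(K)$ and Corollary~\ref{basic}) and takes closures.

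For the negative assertions the paper does something different: it invokes its own Examples~\ref{easyexample} and~\ref{salmabsp}, where a finitely generated $N\subset\R[W]$ is exhibited with $\ol N=\scrP_W(\pi K)$ (using one-dimensional or strip results from \cite{KM}, \cite{KMS}), while the ordinary $K$-moment problem is shown not to be finitely solvable via \cite{PSch} Corollary~3.10 (curves with real points at infinity). Your $\mathrm{O}_n$-example is a legitimate alternative and arguably simpler structurally: the quotient is a line, $N=PO_{\R[r]}(r)$ is saturated by the classical P\'olya--Szeg\H{o} description of polynomials nonnegative on $[0,\infty)$, and $M=\Sigma\R[x]^2$ fails (SMP) on $\R^n$ for $n\ge2$ by Berg--Christensen--Jensen. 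The trade-off is that your example imports a result from the moment literature not otherwise cited in the paper, whereas the paper's examples stay within the toolkit it has already assembled; on the other hand, your example makes transparent the mechanism (dimension drop under a positive-dimensional compact group) that the paper's finite-group examples obscure.
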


\begin{proof}
The first implication follows by Proposition \ref{mpimp}. The second
implication holds since $\ol N=\scrP_W(\pi K)$ and $N\subset\rey(M)
\subset\scrP_W(\pi K)$ (Lemma \ref{lem2po}) imply $\ol{\rey(M)}=
\scrP_W(\pi K)$.
Examples \ref{easyexample} and \ref{salmabsp} show that (IMP$_K$)
does not imply (SMP$_K$). Therefore, (AMP$_K$) does not imply
(SMP$_K$) either.
\end{proof}

We were not able to decide whether (SMP$_K$) implies (IMP$_K$), nor
whether (AMP$_K$) implies (IMP$_K$). See Open Question~1.

\begin{rem}\Label{mpimpamp}
In a similar vein, one may consider the following three properties of
the $G$-invariant basic closed set $K$:
\begin{itemize}
\item[(IMP)]
The invariant $K$-moment problem is finitely solvable;
\item[(AMP)]
the average $K$-moment problem is finitely solvable;
\item[(SMP)]
the (ordinary) $K$-moment problem is finitely solvable.
\end{itemize}
Of course, ``finitely solvable'' means solvable by finitely generated
quadratic modules, in $\R[V]$ for (AMP) and (SMP), and in $\R[W]$ for
(IMP).

Clearly, (SMP) $\To$ (AMP) and (IMP) $\To$ (AMP). Examples
\ref{easyexample} and \ref{salmabsp} show (IMP) $\not\To$ (SMP) (and
therefore (AMP) $\not\To$ (SMP)). Example \ref{timbsp} (due to Tim
Netzer) shows that (SMP) $\not\To$ (IMP) (and therefore (AMP) $\not
\To$ (IMP)).
\end{rem}

\begin{rem}\Label{themoral}
For describing the \emph{$G$-invariant} linear functionals $L\in\R[V]
^\du$ which correspond to measures on $K$ (or equivalently, to
$G$-invariant such measures, see \ref{equiconds}), (IMP) and (AMP)
are in principle as good as (SMP). Indeed, (AMP) and (SMP) both mean
that one has to check $L(q^2f_i)\ge0$ for all $q\in\R[V]$ and a
finite number of fixed $f_i\in\R[V]$. For (IMP) it is the same,
except that the $f_i$ are in addition $G$-invariant and one may
restrict to $G$-invariant multipliers $q$, which further simplifies
the task. Therefore, from a practical point of view, it is
interesting to find situations where (IMP) or (AMP) holds, but (SMP)
fails.

Such situations are constructed in \ref{easyexample} and
\ref{salmabsp}: Here (SMP) fails, while (IMP) (and thus (AMP)) holds.
\end{rem}

\begin{rem}\Label{mpsolvbyinvfcts}
If $K$ is a $G$-invariant basic closed set in $V(\R)$, then the
$K$-moment problem need not be solvable by any number of
\emph{$G$-invariant} functions.

In other words, what we are claiming is that the preordering $T$ in
$\R[V]$ generated by $\scrP_V(K)^G=\scrP_W(\pi K)$ need not be dense
in $\scrP_V(K)$, although the saturation of $T$ is clearly equal to
$\scrP_V(K)$.

Indeed, in Examples \ref{easyexample} or \ref{salmabsp} there exists
a finitely generated preordering $N$ in $\R[W]$ satisfying $\ol N=
\scrP_W(\pi K)$. Let $M$ be the preordering generated by $N$ in
$\R[V]$. Then $\scrP_W(\pi K)=\ol N\subset\ol M$, and therefore also
$\ol T\subset\ol M$. On the other hand, $\ol M\subsetneq\scrP_V(K)$,
since $M$ is finitely generated but the $K$-moment problem is not
finitely solvable in these examples.

Other classes of examples illustrating the point of this remark will
be constructed in \cite{CKM}.
\end{rem}

Our last result is of a negative character. It provides a large class
of cases where the ordinary $K$-moment problem is not finitely
solvable, and where the invariant $K$-moment problem is not finitely
solvable either:

\begin{thm}\Label{conevw}
Let the finite group $G$ act on the irreducible normal affine
$\R$-variety $V$, and let $K$ be a $G$-invariant basic closed set in
$V(\R)$. Suppose that there exists a $G$-equivariant completion
$V\into X$ of $V$ such that
\begin{itemize}
\item[(1)]
$X$ is normal and projective,
\item[(2)]
for every irreducible component $X'$ of $X-V$, $\ol K\cap X'(\R)$ is
Zariski dense in $X'$.
\end{itemize}
Let $W=V\qu G$. Then every finitely generated quadratic module $N$ in
$\R[W]$ with $\scrS_W(N)=\pi(K)$ is stable and closed. In particular,
if $\dim(V)\ge2$, then the $G$-invariant $K$-moment problem is not
finitely solvable.
\end{thm}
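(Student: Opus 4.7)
The plan is to reduce the claim to a direct application of the main theorem of \cite{PSch}, now applied to $W = V\qu G$ and the set $\pi(K)$ in place of the pair $(V,K)$. The core step is to convert the given equivariant completion $V\hookrightarrow X$ into a completion of $W$ satisfying an analogous boundary density condition, after which the non-equivariant theorem directly yields stability, closedness, and non-finite-solvability for quadratic modules in $\R[W]$.

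Since $G$ is finite and $X$ is normal and projective with $G$-action, the categorical quotient $Y:=X\qu G$ exists as a normal projective $\R$-variety, and the quotient morphism $\pi_X\colon X\to Y$ is finite, surjective, and open (quotients by finite groups in the Zariski topology are always open). Because $V\subset X$ is open and $G$-stable one has $\pi_X^{-1}(W)=V$, so $W$ is identified with the open subvariety $\pi_X(V)$ of $Y$, and every irreducible component $Y'$ of $Y\setminus W$ is of the form $\pi_X(X')$ for some irreducible component $X'$ of $X\setminus V$, with $\pi_X|_{X'}\colon X'\onto Y'$ finite and surjective.

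Next I would verify that the hypotheses of the non-equivariant theorem hold for $(W,\pi(K))$ with completion $Y$. The variety $W$ is irreducible, affine, and normal — the last because $\R[V]^G$ is integrally closed whenever $G$ is finite and $\R[V]$ is a normal domain — and $\pi(K)$ is basic closed in $W(\R)$ by Proposition~\ref{imbc}. For the density condition, fix a component $Y'$ of $Y\setminus W$ and a component $X'$ of $X\setminus V$ with $\pi_X(X')=Y'$. Properness of $\pi_X$ yields $\pi_X(\ol K)=\ol{\pi(K)}$, and surjectivity of $\pi_X|_{X'}$ in the Zariski topology transfers hypothesis~(2) — density of $\ol K\cap X'(\R)$ in $X'$ — to density of $\ol{\pi(K)}\cap Y'(\R)$ in $Y'$.

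The theorem of \cite{PSch} now applies to $(W,\pi(K),Y)$ and directly yields both that every finitely generated quadratic module $N\subset\R[W]$ with $\scrS_W(N)=\pi(K)$ is stable and closed in $\R[W]$, and, since $\dim W=\dim V\ge 2$ (finite group quotients preserve dimension), that no such $N$ can solve the ordinary $\pi(K)$-moment problem in $W$. By Corollary~\ref{charactimp}, the latter says exactly that no such $N$ solves the invariant $K$-moment problem, which proves the second assertion. The main obstacle is the clean transfer of the boundary density hypothesis from $X\setminus V$ to $Y\setminus W$; this goes through thanks to the finiteness of $\pi_X$ (giving properness, so image closures behave well) combined with the surjectivity of $\pi_X$ on each boundary component, which together yield both the closure identity $\pi_X(\ol K)=\ol{\pi(K)}$ and the Zariski-density transfer.
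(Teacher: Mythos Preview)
Your proposal is correct and follows the same overall strategy as the paper: form the quotient completion $Y=X/G$, verify that $(W,\pi(K),Y)$ satisfies the hypotheses of the Powers--Scheiderer theorem, and then apply that theorem together with Corollary~\ref{charactimp}. The one genuine difference lies in how the boundary density condition is transferred from $X\setminus V$ to $Y\setminus W$. The paper argues via the real spectrum: it rephrases ``$\ol K\cap X'(\R)$ is Zariski dense in $X'$'' as the existence of a specialization $\beta\succ\alpha$ in $\wt{X(\R)}$ with $\beta\in\wt K$ and support of $\alpha$ equal to $X'$, and then simply pushes this specialization forward along $\wt\pi$. Your argument is more elementary and purely topological: compactness of $X(\R)$ makes $\pi_X$ closed on real points, giving $\pi_X(\ol K)=\ol{\pi(K)}$, and Zariski-closedness of the finite morphism $\pi_X$ pushes Zariski density of $\ol K\cap X'(\R)$ in $X'$ to Zariski density of its image in $Y'$. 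Both routes work; yours avoids the real spectrum machinery, while the paper's is a one-line functoriality argument once the specialization reformulation is in hand. One minor point of precision: the paper separates the ``stable and closed'' conclusion (from \cite{PSch}) from the failure of (SMP) (from \cite{Sch:stable}, which needs $\dim\pi(K)=\dim W$, deduced from hypothesis~(2)), whereas you bundle both into a single citation and invoke only $\dim W\ge2$. This is harmless, since the boundary density condition you verified already forces $\pi(K)$ to be Zariski dense in $W$, but it is worth making explicit.
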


\begin{proof}
Note that in (2), $\ol K$ denotes the closure of $K$ inside $X(\R)$.
The variety $X$ has a covering by open affine $G$-invariant subsets,
since every $G$-orbit is contained in some open affine set $U\subset
X$, and since the subset $\bigcap_{g\in G}gU$ is affine and
$G$-invariant. Therefore one can form the (geometric) quotient
variety $Y=X/G$. The morphism $\pi\colon X\to Y$ is finite, and $Y$
is a normal and complete variety (\cite{Kr} p.~100). The inclusion
$V\into X$ induces an open embedding $W\into Y$. The square of
morphisms
$$\xymatrix{
V \ar[r] \ar[d] & X \ar[d]^\pi \\
W \ar[r] & Y}$$
is cartesian.

Write $X_0:=X-V$ and $Y_0:=Y-W$. Every irreducible component $Y'$ of
$Y_0$ has the form $Y'=\pi(X')$ with some irreducible component $X'$
of $X_0$. We claim that $Y'(\R)\cap\ol{\pi(K)}$ (the closure taken in
$Y(\R)$) is Zariski dense in $Y'$.

Indeed, by hypothesis (2) there exist $\beta\in\wt K$ and $\alpha\in
\wt{X'(\R)}$ with support of $\alpha$ equal to $X'$ and with $\beta
\succ\alpha$ (specialization in $\wt{X(\R)}$, the real spectrum of
$X$). Applying the map $\wt\pi\colon\wt{X(\R)}\to\wt{Y(\R)}$ of real
spectra we get $\wt\pi(\beta)\succ\wt\pi(\alpha)$. Moreover, $\wt\pi
(\beta)\in\wt{\pi(K)}$, and the support of $\wt\pi(\alpha)$ is
$\pi(X')=Y'$, which proves the claim.

The affine variety $W=V\qu G$ is again normal. So we can apply
\cite{PSch} Theorem 2.14 to $W$ and its completion $Y$, and to the
basic closed subset $\pi(K)$ of $W(\R)$. By this result, every
quadratic module $N$ in $\R[W]$ with $\scrS_W(N)=\pi(K)$
is stable and closed. Since $\dim(K)=\dim(V)$ (as follows from (2)),
$N$ cannot have the (SMP) if $\dim(V)\ge2$, by \cite{Sch:stable}
Thm.\ 5.4. By Corollary \ref{charactimp}, this means that $N$ cannot
solve the invariant $K$-moment problem.
\end{proof}

\begin{example}\Label{exconevw}
The proposition applies in particular when $V$ is a linear
representation space of $G$ and the set $K$ contains a non-empty open
cone. Indeed, every linear $G$-action on $V=\A^n$ extends to a linear
$G$-action on $X=\P^n$.
\end{example}

%-------------------------------------------------------------------%

\section{More examples}

Let $K$ be a basic closed \sa\ set in $V(\R)$ which is $G$-invariant.
We consider the finite solvability of the $K$-moment problem on the
one hand, and of the $G$-invariant $K$-moment problem on the other.
In general, the question of characterizing the solutions of the
moment problem will become easier when one restricts attention to
solutions with symmetries. Our first examples are meant to
demonstrate this fact. They show that finite characterizations of the
$G$-invariant solutions may be available at the same time when such
characterizations do not exist for the class of all solutions.

Here is a first class of examples in dimension one.

\begin{lab}\Label{easyexample}
Let $f(x)$ be a square-free monic polynomial in $\R[x]$ of degree
$d$, and consider the set $K=\{(x,y)\in\R^2\colon y^2=f(x)\}$. The
group $G$ of order two acts on $K$ by $(x,y)\mapsto(x,-y)$. The
usual $K$-moment problem fails to be finitely solvable if $d\ge3$,
whereas the $G$-invariant $K$-moment problem can be finitely solved.

Indeed, $K$ is the set of $\R$-points of an affine non-singular
hyperelliptic curve $C$, whose genus $g=\lfloor\frac{d-1}2\rfloor$ is
positive for $d\ge3$. All points of $C$ at infinity are real (there
are one or two of them, according to whether $d$ is odd or even).
Hence the $K$-moment problem is not finitely solvable (see
\cite{PSch} Corollary 3.10).

On the other side, the quotient curve $C\qu G$ is the affine line,
and $\pi\colon C\to C\qu G=\A^1$ is the map $\pi(x,y)=x$. Hence $Z=
\{x\in\R\colon f(x)\ge0\}$, and the $Z$-moment problem is solved by
a finite number of polynomials in $x$, which are explicit in terms of
the real zeros of $f(x)$ (see \cite{Sch:MZ} 5.23.1 or \cite{KM} 2.2).
So these polynomials solve the $G$-invariant $K$-moment problem.

Many variations of this example can be built, using proper
$G$-invariant closed subsets $K$ of $C(\R)$.
\end{lab}

\begin{lab}\Label{salmabsp}
For a two-dimensional example consider the dihedral group $G=D_4$ of
order eight acting on the real affine plane $V=\A^2$ in the natural
way (as the symmetry group of a square centered at the origin). The
basic closed set
$$K:=\bigl\{(x,y)\in\R^2\colon-1\le(x^2-1)(y^2-1)\le0\bigr\}$$
in the plane $\R^2=V(\R)$ is $G$-invariant.
\begin{center}
\includegraphics*[height=55mm]{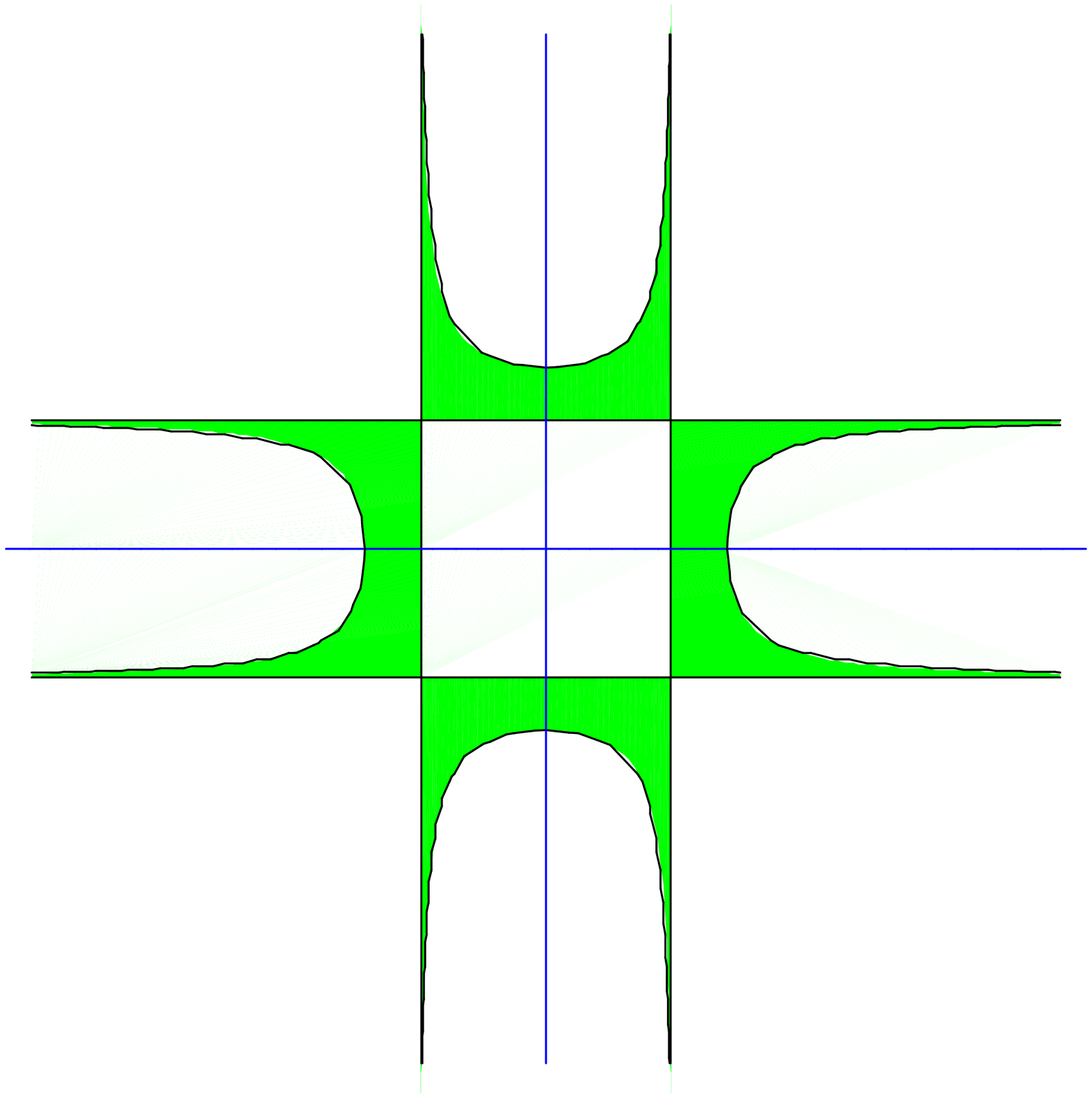}
\end{center}
The ring of invariants is $\R[x,y]^G=\R[u,v]$ with
$$u=x^2+y^2,\quad v=x^2y^2,$$
and $W=\A^2\qu G$ is itself an affine plane (see \cite{St}).
% what would be a precise reference?
The image of $\pi\colon V(\R)\to W(\R)$ is
$$Z=\pi(\R^2)=\bigl\{(u,v)\in\R^2\colon u\ge0,\ v\ge0,\ u^2\ge4v
\bigr\}.$$
Since $(x^2-1)(y^2-1)=v-u+1$, we have
$$\pi(K)=\bigl\{(u,v)\in\R^2\colon v\ge0,\ 1\le u-v\le2\bigr\}.$$
This is a (half-) strip in the $(u,v)$-plane:
\begin{center}
\includegraphics*[height=55mm]{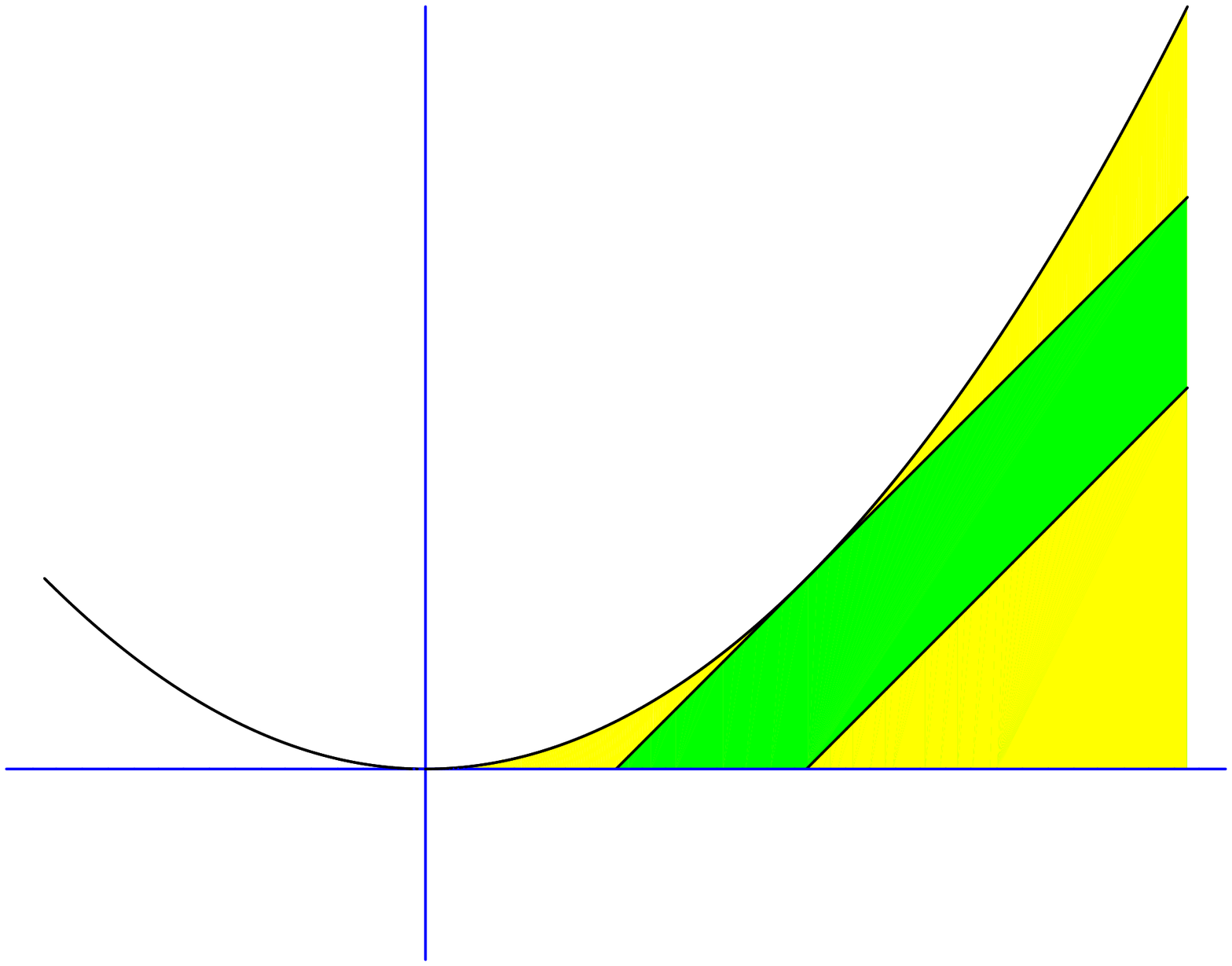}
\end{center}
The moment problem for $\pi(K)$ is solved by the preordering $N$ in
$\R[u,v]=W(\R)$ generated by $v$, $u-v-1$ and $2-u+v$ (see \cite{KMS}
Corollary 5.2). This means that the $G$-invariant $K$-moment problem
is solved by $N$ (Corollary \ref{charactimp}).

On the other hand, we'll show now that the usual $K$-moment problem
is not finitely solvable. Given a real parameter $c$, consider the
affine plane curve
$$E_c:\ (x^2-1)(y^2-1)=c.$$
Let $\P^2=\A^2\cup L$ be the projective plane, where $L$ is the line
at infinity. An easy calculation shows that the Zariski closure
$\ol E_c$ of $E_c$ in $\P^2$ meets $L$ in two points $P$ and $Q$,
both real, and furthermore $P$ and $Q$ are ordinary double points of
$\ol E_c$ (with real tangents).

Moreover, the affine curve $E_c$ is non-singular for $c\ne0,\>1$.
% the affine curve E_c is singular only for c=0 and for c=1
For these values, therefore, $E_c$ is a non-singular affine curve
of genus one
which has four points at infinity, all of them real.
Since $E_c(\R)\subset K$ for $-1\le c<0$, we conclude that the
$K$-moment problem is not finitely solvable, using \cite{PSch}
Corollary~3.10. (One single such value $c$ is already enough for the
argument.)
\end{lab}

We are grateful to Tim Netzer for finding the following example, and
allowing us to include it here.

\begin{lab}\Label{timbsp}
The following two dimensional example shows that (SMP) does not imply
(IMP). In particular, (AMP) does not imply (IMP).

Let the group $G$ of order two act on the affine plane $V=\A^2$ by
permuting the coordinates $x$ and $y$. The basic closed set
$$K:=\bigl\{(x,y)\in\R^2\colon x\ge0,\ y\ge0,\ xy\le1\bigr\}$$
is $G$-invariant. Furthermore, the preordering generated by $x$, $y$,
$1-xy$ in $\R(V)=\R[x,y]$ solves (SMP) for $K$, see for example
\cite{KMS}, Example 8.4.

The ring of $G$-invariant polynomials $\R[x,y]^G$ is a polynomial
ring $\R[u,v]$, where $u=x+y$ and $v=xy$. So $W:=\A^2\qu G$ is again
an affine plane, and the image of $\pi\colon V(\R)\to W(\R)$ is
$$Z=\pi(\mathbb{R}^2)=\bigl\{(u,v)\in\R^2\colon u^2\ge4v\bigr\}.$$
One checks that
$$\pi(K)=\bigl\{(u,v)\in\R^2\colon0\le u,\ 0\le v\le1,\ u^2\ge4v
\bigr\}$$
holds.
\begin{center}
\includegraphics*[height=55mm]{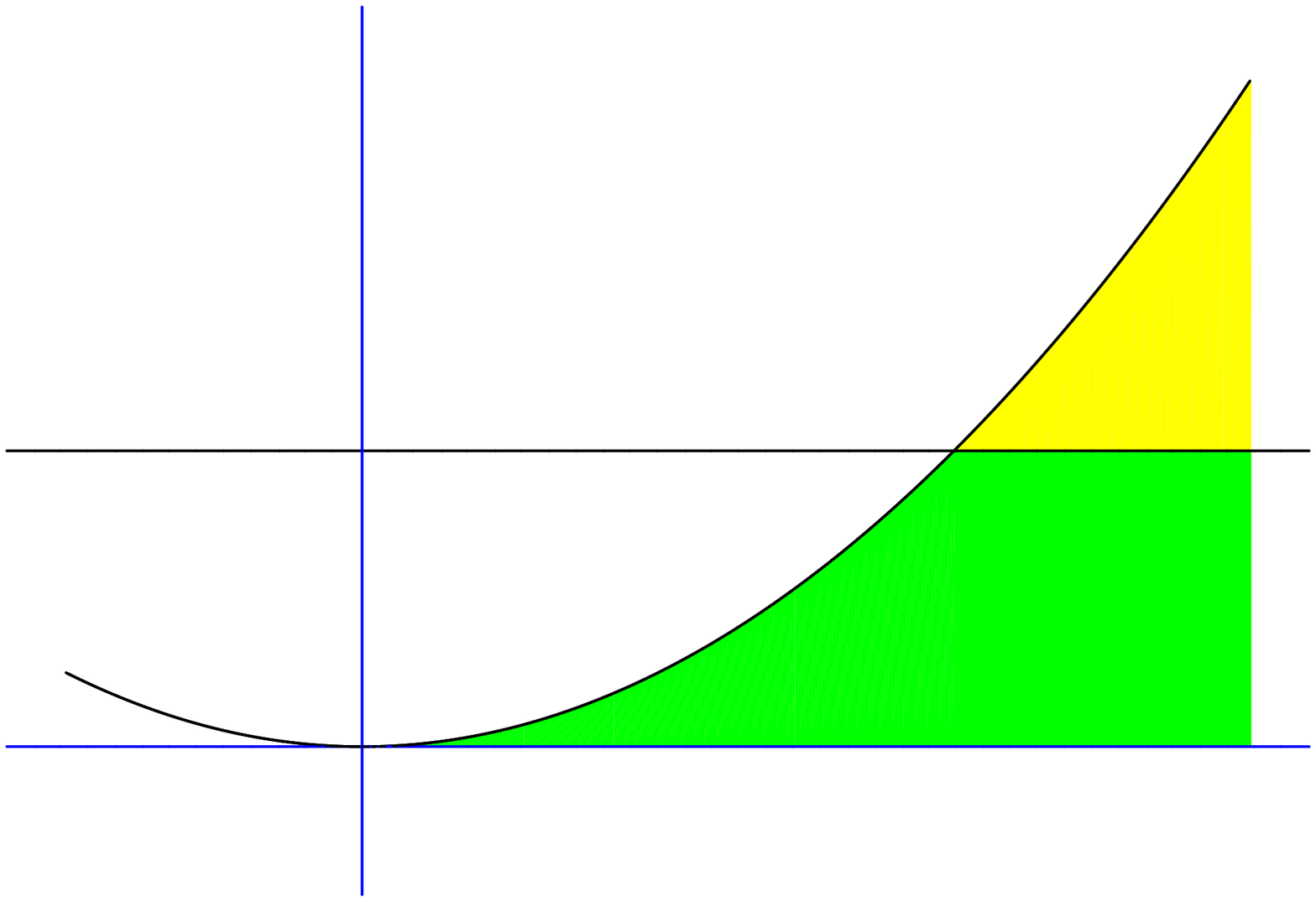}
\end{center}
\end{lab}

\begin{lem}
The $\pi(K)$-moment problem is not finitely solvable.
\end{lem}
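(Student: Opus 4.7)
The plan is to apply the strategy of Theorem \ref{conevw} to $\pi(K)\subset W(\R)$: I would show that every finitely generated quadratic module $N\subset\R[u,v]$ with $\scrS_W(N)=\pi(K)$ must be stable and closed, and then invoke \cite{Sch:stable} Thm.~5.4, using that $\pi(K)$ is $2$-dimensional and non-compact, to rule out (SMP) for $N$. By Corollary \ref{charactimp} this translates into non-finite solvability of the $\pi(K)$-moment problem.

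To obtain the stability assertion via \cite{PSch} Thm.~2.14, I need a normal projective completion $W\into Y$ such that $\ol{\pi(K)}\cap Y'(\R)$ is Zariski dense in $Y'$ for every irreducible component $Y'$ of $Y-W$. Since $v$ is bounded on $\pi(K)$ but $u$ ranges over $[0,\infty)$, in the standard completion $Y=\P^2$ the closure $\ol{\pi(K)}$ meets the line at infinity only in the point $[1:0:0]$. I would therefore take $Y$ to be the blow-up of $\P^2$ at $[1:0:0]$. A local coordinate change ($\lambda=v/u$, $t=1/u$, giving $\pi(K)=\{t>0,\ 0\le\lambda\le 1,\ \lambda t^2\le 1/4\}$) shows that $\ol{\pi(K)}$ meets the exceptional divisor $E\cong\P^1$ in the full interval $\{0\le\lambda\le 1\}$, which is the set of limit points of the strict transforms of the horizontal lines $v=c$ for $c\in[0,1]$. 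This interval is infinite, hence Zariski dense in $E$.

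The hard part will be handling the second component of $Y-W$, namely the strict transform $\tilde L$ of the line at infinity of $\P^2$. Since $v$ is bounded on $\pi(K)$, the closure $\ol{\pi(K)}$ is entirely disjoint from $\tilde L$, so the density hypothesis of \cite{PSch} Thm.~2.14 fails on $\tilde L$. My proposal to address this is to exploit the fact that stability and closedness are in essence local conditions at each boundary divisor actually hit by $\ol{\pi(K)}$, and to work on the open subvariety $Y\setminus\tilde L$ of $Y$, for which $E$ is the only boundary divisor at infinity and the density hypothesis is satisfied. This requires either a localised version of \cite{PSch} Thm.~2.14, or an ad hoc argument along the same lines that uses only the behaviour of $\ol{\pi(K)}$ along $E$; this is parallel in spirit to the $G$-invariant affine-cover trick used in the proof of Theorem \ref{conevw}.

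Once stability and closedness of $N$ are granted, the conclusion is immediate: by \cite{Sch:stable} Thm.~5.4 a stable finitely generated quadratic module whose zero set is two-dimensional and non-compact cannot satisfy (SMP), and as noted above this rules out any finitely generated $N\subset\R[u,v]$ solving the $\pi(K)$-moment problem.
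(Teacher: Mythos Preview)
Your approach has a genuine gap that cannot be repaired. The ``localised version of \cite{PSch} Thm.~2.14'' you are hoping for---one requiring density of $\ol{\pi(K)}$ only on those boundary components it actually meets---is simply false in this situation. Compare $\pi(K)$ with the plain half-strip $S=\{(u,v):u\ge0,\ 0\le v\le1\}$. In your blown-up completion $Y$, the closures $\ol{\pi(K)}$ and $\ol S$ meet the exceptional divisor $E$ in the same interval and are both disjoint from $\tilde L$; the two sets have identical behaviour at infinity. So any stability argument using only this boundary data would apply verbatim to $S$. But $PO(u,v,1-v)$ \emph{does} solve the $S$-moment problem (this is \cite{KMS} Cor.~5.2, the same result invoked in Example~\ref{salmabsp}), and hence by \cite{Sch:stable} Thm.~5.4 it is not stable. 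Thus no localised stability theorem of the kind you need can exist. The obstruction for $\pi(K)$ is not a phenomenon at infinity at all; it lives on the curved boundary arc $u^2=4v$, which your completion argument does not see. (Incidentally, your coordinate computation is also off: with $\lambda=v/u$ and $t=1/u$ the condition $v\le1$ reads $\lambda\le t$, and the closure meets $\{t=0\}$ only at $\lambda=0$. The correct blow-up chart has $v$ itself as coordinate on $E$, and then the density claim on $E$ is right.)

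The paper's proof is entirely different and elementary. It fibres $\pi(K)$ over the bounded coordinate $v=b$, each fibre being the ray $[2\sqrt b,\infty)$. If some $PO(f_1,\dots,f_s)$ solved (SMP) for $\pi(K)$, then by \cite{Sch:stable} Prop.~4.8 the specialised preordering $PO(f_1(u,b),\dots,f_s(u,b))\subset\R[u]$ would solve (SMP) for $[2\sqrt b,\infty)$ for every $b\in[0,1]$; by \cite{KM} Thms.~2.1--2.2 this forces one of the $f_i(u,b)$ to equal $u-2\sqrt b$ up to a positive scalar. Pigeonholing over infinitely many $b$ and comparing coefficients in $f_1(u,v)=\sum_j g_j(v)u^j$ yields $g_0(v)^2=4v\,g_1(v)^2$ in $\R[v]$, impossible by degree parity. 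The argument thus exploits exactly what distinguishes $\pi(K)$ from the half-strip $S$: the non-polynomial dependence of the fibre endpoint $2\sqrt b$ on~$b$.
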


\begin{proof}
Suppose there are polynomials $f_{1},\dots,f_{s}\in\R[u,v]$ such that
the preordering $PO(f_1,\dots,f_s)$ solves (SMP) for $\pi(K)$. Then
for any $b\in[0,1]$, the preordering
$$PO(f_1(u,b),\dots,f_s(u,b))\subset\R[u]$$
solves (SMP) for the set
$$\bigl[2\sqrt b,\,\infty\bigr[\subset\R,$$
by \cite{Sch:stable}, Prop.\ 4.8. By \cite{KM}, Theorems 2.1 and 2.2,
the natural generator for this set, namely $u-2\sqrt b$, must be
among the $f_i(u,b)$ up to a constant factor. So without loss of
generality, assume
$$f_1(u,b)=r(b)\,\bigl(u-2\sqrt b\bigr)$$
for infinitely many $b\in[0,1]$ and some positive function $r$.
Writing
$$f_1(u,v)=\sum_jg_j(v)\,u^j$$
and comparing coefficients, we get $g_0(b)=-2r(b)\sqrt b$ and
$g_1(b)=r(b)$ for infinitely many $b\in [0,1]$. So for all these
$b$,
$$g_0(b)^2=4r(b)^2b=4g_1(b)^2b,$$
so $g_0^2=4g_1^2v$ in $\R[v]$. As the left hand side has even and the
right hand side has odd degree, this is a contradition.
\end{proof}

\begin{rem}
Note that the example does not give a negative answer to the question
whether (SMP$_K$) implies (IMP$_K$). For this one would need to have
a collection of finitely many \emph{$G$-invariant} polynomials which
solve the $K$-moment problem. It can be shown that such a collection
does not exist.

So the question whether (SMP$_K$) implies (IMP$_K$) remains open.
\end{rem}

%-------------------------------------------------------------------%

\section{Open questions}

1.\
Let $K\subset V(\R)$ be basic closed and $G$-invariant. Assume that
$N$ is a quadratic module in $\R[V]^G=\R[W]$, and $M$ is the
quadratic module generated by $N$ in $\R[V]$. If $M$ solves the
$K$-moment problem (on $V$), does it follow that $N$ solves the
$\pi(K)$-moment problem (on $W$)? In other words, does the
implication (SMP$_K$) $\To$ (IMP$_K$) hold (c.f.\ Remark
\ref{modimpamp} and Corollary \ref{remark 6.18})? In all cases where
(SMP$_K$) is satisfied and that we could analyze, it turned out that
(IMP$_K$) holds as well. Similarly, we do not know any example where
(AMP$_K$) holds and (IMP$_K$) fails.

2.\
What about the averaged moment property (AMP) in the situations
covered by Theorem \ref{conevw}, in particular Example
\ref{exconevw}? Can it possibly hold for $\dim(V)\ge2$?

3.\
Given $G$ with $G(R)$ \sa ally compact, consider linear
representation spaces $V$ of $G$ (or more general affine
$G$-varieties). Under what conditions on $V$ is $S_0=\bigl(\Sigma
R[V]^2\bigr)^G$ finitely generated as a preordering in $R[V]^G$?
See Example \ref{s0notfg}.

4.\
Assume that $K\subset V(R)$ is $G$-invariant and that the saturated
preordering $\scrP_V(K)$ is finitely generated. Does this imply that
$\scrP_V(K)^G=\scrP_W(\pi K)$ is finitely generated as well (as a
preordering in $R[W]=R[V]^G$)?

5.\
How can Example \ref{salmabsp} be generalized, what is its essential
feature? Note that for such an example in $V=\A^n$, $n\ge2$, the
failure of the moment property for $K$ in $V$ cannot be due to the
containment of a cone in $K$, by the negative result \ref{exconevw}.

6.\
(C.f.\ Remark \ref{mpsolvbyinvfcts})
Let $K$ be a $G$-invariant basic closed set in $V(\R)$, and let $T$
be the preordering generated in $\R[V]$ by $\scrP_V(K)^G=\scrP_W
(\pi K)$. Then
$$T\subset\scrP_V(K)=\Sat_V(T).$$
Note that $T=\scrP_V(K)$ means that every $f\in R[V]$ with $f|_K\ge0$
can be written $f=\sum_ia_i^2f_i$ with $a_i$, $f_i\in R[V]$, $f_i|_K
\ge0$ and $f_i$ $G$-invariant. This motivates the following question:
Find necessary and sufficient conditions so that $T=\scrP_V(K)$, or
so that $T$ is dense in $\scrP_V(K)$. Note that, in general, $T$
fails to be dense in $\scrP_V(K)$ (Remark \ref{mpsolvbyinvfcts}).

\oldversion{%
\begin{lab}
Regarding Question 4.\ above, let us observe that
$T=\scrP_V(K)$ (or
$T$ is dense in $\scrP_V(K)$) need not hold in general:

Let $K$ be a $G$-invariant basic closed subset of $V(R)$,
$K:=\scrS_V(f_1,\dots,f_r)$ with $f_1,\dots,f_r\in R[V]^G$, and let
$M:=PO_V(f_1,\dots,f_r)$ the corresponding finitely generated
$G$-invariant preordering in $R[V]$. Note that $M^G$ is the
 $S_0$-module generated by $f_1,\dots,f_r$
in $R[V]^G$ (Proposition \ref{lem2po}).
 Assume that
$M^G$ is saturated
(as a preordering of $R[V]^G$). Let $T$ be as in Question 4.\ above;
then
$$T=\{f\in R[V] ; f=\sum_ia_i^2h_i , a_i, h_i \in R[V], h_i \in M^G =
\scrP_V(K)^G\}.$$
Thus $T=M$.

If moreover $M$ is {\it not} dense in its saturation,
 $T=\scrP_V(K)$, or
$T$ is dense in $\scrP_V(K)$ cannot hold.

Examples of finitely generated preorderings $M$ which are not dense in
their saturation, but for which
$M^G$ is saturated, do exist, see \cite{CKM}.
\end{lab}
}% ende "oldversion"

%===================================================================%

\end{document}